\numberwithin{equation}{section}
\newcommand{\ds}{\displaystyle}
\DeclareMathOperator{\Sp}{Sp}
\DeclareMathOperator{\GL}{GL}
\DeclareMathOperator{\SL}{SL}
\DeclareMathOperator{\OO}{SO}
\DeclareMathOperator{\oo}{so}
\renewcommand{\sp}{\mathrm{sp}}
\renewcommand{\OE}{\mathrm{O}}
\renewcommand{\oe}{\mathrm{o}^{\text{even}}}
\newcommand{\usp}{\mathfrak{sp}}
\newcommand{\uoo}{\mathfrak{so}}
\newcommand{\uoe}{\mathfrak{o}}
\newcommand{\uX}{X_{\infty}}
\renewcommand{\P}{\mathcal{P}}
\newcommand{\Ct}{\mathcal{C}_t}
\DeclareMathOperator{\sgn}{sgn}
\newcommand{\floor}[1]{\left\lfloor #1 \right\rfloor}
\newcommand{\core}[2]{\mathrm{core}_{#2}{(#1)}}
\newcommand{\quo}[2]{\mathrm{quo}_{#2}{(#1)}}
\newcommand{\x}{\bar{x}}
\newcommand{\X}{\overline{X}}
\newcommand{\tX}{\widetilde{X}}
\DeclareMathOperator{\rk}{rk}
\DeclareMathOperator{\rev}{rev}
\DeclareMathOperator{\hs}{hs}
\DeclareMathOperator{\rs}{rs}
\DeclareMathOperator{\gl}{gl}
\newtheorem{thm}{Theorem}[section]
\newtheorem{prop}[thm]{Proposition}
\newtheorem{cor}[thm]{Corollary}
\newtheorem{lem}[thm]{Lemma}
\newtheorem{rem}[thm]{Remark}
\newtheorem{eg}[thm]{Example}
\crefname{thm}{Theorem}{Theorems}
\title
{Further results for classical and universal characters twisted by roots of unity}
\author{Arvind Ayyer}
\address{Arvind Ayyer, Department of Mathematics, 
Indian Institute of Science, Bangalore  560012, India.}
\email{arvind@iisc.ac.in}
\author{Nishu Kumari}
\address{Nishu Kumari, Department of Mathematics, 
Indian Institute of Science, Bangalore  560012, India.}
\email{nishukumari@alum.iisc.ac.in}
\date{\today}
\dedicatory{To Vyjyanthi Chari on the occasion of her 65th birthday}
\begin{document}

\begin{abstract}
We revisit factorizations of classical characters under various specializations, some old and some new. We first show that all characters of classical families of groups twisted by odd powers of an even primitive root of unity factorize into products of characters of smaller groups. Motivated by conjectures of {Prasad and Wagh} (Manuscr. Math. {2022}), we then observe that certain specializations of Schur polynomials factor into products of two characters of other groups.
We next show, via a detour through hook Schur polynomials, that certain Schur polynomials indexed by staircase shapes factorize into linear pieces.
Lastly, we consider classical and universal characters specialized at roots of unity. One of our results, in parallel with Schur polynomials, is that universal characters take values only in $\{0, \pm 1, \pm 2\}$ at roots of unity.
\end{abstract}

\subjclass[2010]{05A15, 05E05, 05E10, 20G05, 20G20}
\keywords{classical characters, universal characters, twisting by roots of unity, factorizations, hook Schur polynomials.}

\maketitle

\section{Introduction}
The irreducible characters of the classical families of groups possess many remarkable properties. In this work, we focus on the factorization properties of these characters under different specializations building on our previous work~\cite{ayyer-kumari-2022,kumari-2024}. This subject has a venerable history and we refer to Albion~\cite{albion-2025} for a detailed account.

Littlewood and Richardson were certainly one of the first who realized that the characters of the general linear group $\GL(n)$, also known as \emph{Schur polynomials} and denoted $s_\lambda(x_1, \dots, x_n)$, specialize to $-1, 0$ {or} $1$ when the variables are taken to be roots of unity~\cite[Theorem IX]{littlewood-richardson-1934}; see \cref{thm:roots of unity}. In the same work, they also generalized this to a version with many variables~\cite[Theorem VI]{littlewood-richardson-1934}; see \cref{thm:schur-fac}. 
This was later published in Littlewood's book~\cite[Chapter 7]{littlewood-1950}.
This latter result has been independently rediscovered by Prasad~\cite{prasad-2016}, and another proof has recently been given by Karmakar~\cite{karmakar-2023}. Kumar~\cite{kumar-2023} has generalized these results to flagged skew Schur polynomials.

In an almost unnoticed work, Lecouvey~\cite{lecouvey-2009b} generalized these classical character factorizations, which were found independently by the authors~\cite{ayyer-kumari-2022}. Kumari has generalized these factorizations with more specializations~\cite{kumari-2024}. 
Lecouvey~\cite{lecouvey-2009a}, in another little-known work, also generalized these results to universal characters of other families of Weyl groups of types B, C and D, and these were again found independently by Albion~\cite{albion-2023}. 
Very recently, Karmakar~\cite{karmakar-2024} has considered the classical characters evaluated at elements of order 2, generalizing a special case of these factorization results, and
{Nadimpalli--Pattanayak--Prasad~\cite{nadimpalli-et-al-2025} have given representation-theoretic proofs of some of these results.}

In another direction, specializations of Schur polynomials when one half of the variables are specialized to reciprocals of the other half were considered by Okada~\cite{okada-1998} and by Ciucu--Krattenthaler~\cite{CiuKra09} for rectangular partitions, and by Behrend--Fischer--Konvalinka~\cite{BehFisKon17} and Ayyer--Behrend--Fischer~\cite{AyyBehFis16} for double staircase partitions. These were generalized to self-dual shapes by Ayyer--Behrend~\cite{ayyer-behrend-2019} and skew shapes by Ayyer--Fischer~\cite{ayyer-fischer-2020}.

In this work, we extend several of these factorization results. 
{Our motivation is to find new factorizations and document some of those
not yet written down in the literature.}
We first consider the factorizations of classical characters twisted by odd powers of an even primitive root of unity in \cref{sec:fact}.
{That this special case would have nice factorizations was not clear beforehand, and this may be a case that may be amenable to representation-theoretic tools.}
In \cref{sec:rels}, we prove {unexpected} relations among characters of different groups when specialized to roots of unity. 
{Although most of the factorizations here are not new, what is new is that they are related to each other in some very simple way.}
In \cref{sec:staircase}, we begin by discussing when a universal character of classical type is independent of variables which are twisted by roots of unity. 
{Here, we note a phenomenon we have not seen before, namely that certain symmetric polynomials become independent of some of the variables.}
As a consequence, we provide necessary and sufficient conditions for a Schur polynomial in two finite sets of variables to be equal to the corresponding hook Schur polynomial. 
This will also prove factorization results for Schur polynomials indexed by staircase shapes. 
Finally, in \cref{sec:univ roots of unity} and \cref{sec:class roots of unity}, we will give 
explicit formulas for the universal and classical characters evaluated at roots of unity respectively.
{While it is true that these latter results are special cases of earlier older results, it is helpful to write them down separately. 
For instance, it is not apriori clear that $\uoe_\lambda$ evaluated at roots of unity is either $0$ or $\pm 1$ (\cref{thm:uoe roots of unity}) .}

We begin by reviewing the background material in \cref{sec:back}, which includes the relevant basics of partitions, formulas for classical and universal characters, and factorizations thereof.

\section{Background}
\label{sec:back}

\subsection{Partitions, cores and quotients}
We quickly recall the basic terminology of partitions as well as cores and quotients. 
For more details, see our earlier work~\cite{ayyer-kumari-2022}.

An \emph{(integer) partition} $\lambda = (\lambda_1, \dots, \lambda_n)$ is a weakly decreasing sequence of nonnegative integers.
The elements of the partition are called \emph{parts}. 
Let $\P$ be the set of all partitions.
The \emph{size} $|\lambda|$ of a partition $\lambda$ is the sum of its parts, and the \emph{length} $\ell(\lambda)$ of $\lambda$ is the number of nonzero parts. 
Let $\P_n$ be the set of all partitions with length at most $n$. {For $\lambda \in \P_n$ and $a \geq 0$, we denote the partition $(a + \lambda_1,\dots, a + \lambda_n)$ by $a + \lambda$. }

The \emph{conjugate} of a partition $\lambda$, denoted $\lambda'$, is the partition such that $\lambda'_i$ is the number of parts of $\lambda$ that are at least as large as $i$. For example, $(4, 2, 2, 1, 1)' = (5, 3, 1, 1)$. The Young diagram of $\lambda$ is a left-justified array of boxes where the $i$'th row contains $\lambda_i$ cells. Then the Young diagram of $\lambda'$ is obtained by transposing the rows and columns of that of $\lambda$. A partition $\lambda$ is said to be \emph{self-conjugate} if $\lambda = \lambda'$. {For partitions $\lambda, \mu \in \P$, we write
$\mu \subset \lambda$ if the Young diagram of $\mu$ is contained in that of $\lambda$.}

It is well-known that irreducible (polynomial) representations of $\GL(n)$ are indexed by partitions in $\P_n$.
Suppose $\lambda \in \P_n$, with $\lambda_1 = c$ indexes an irreducible representation of $\GL(n)$. Then the \emph{dual representation} is indexed by the partition $\hat{\lambda} =  (c - \lambda_n, \dots, c - \lambda_1)$. In other words, $\hat{\lambda}$ is obtained by considering the complement of the Young diagram of $\lambda$ inside the $c \times n$ rectangle, and reading it in reverse order. 
For example, if $\lambda = (5, 2, 1, 1) \in \P_4$, then $c = 5$ and so $\hat{\lambda} = (4, 4, 3, 0)$. If $\lambda$ is such that $\hat{\lambda} = \lambda$, then $\lambda$ is said to index a \emph{self-dual representation}. Notice that duality is tied to {the rank} of the underlying group. For example, $(4, 2)$ is self-dual for $\GL(3)$, but not for $\GL(4)$.

The \emph{(Frobenius) rank} of a partition $\lambda$, denoted $\rk(\lambda)$, is the largest integer $k$ such that $\lambda_k \geq k$.
A partition is said to be \emph{strict} if all its {nonzero} parts are distinct. 
Any partition can be expressed in \emph{Frobenius coordinates}, written $\lambda = (\alpha \mid \beta)$, where $\alpha$ and $\beta$ are strict partitions given by $\alpha_i = \lambda_i - i$ and $\beta_i = \lambda'_i - i$ for $i \in \{1,2,\dots,\rk(\lambda)\}$.
A partition $\lambda$ is said to be \emph{symplectic} (resp. \emph{orthogonal}) if $\lambda=(\alpha \mid \alpha+1)$ (resp. $\lambda=(\alpha+1 \mid \alpha)$) in Frobenius coordinates, for some strict partition $\alpha$. Notice that $\lambda$ is self-conjugate if and only if $\lambda = (\alpha \mid \alpha)$.

For a partition $\lambda$ and an integer $m$ such that $\lambda \in \P_m$, 
define the {\em {beta set} of $\lambda$} by $\beta(\lambda,m) = (\beta_1(\lambda,m),\dots,\beta_m(\lambda,m))$ where $\beta_i(\lambda,m) = \lambda_i+m-i$. 
{Let $t > 2$ be a fixed positive integer throughout the article.}
{For $0 \leq i \leq t-1$, suppose 
$n_{i}(\lambda,m)$ is the number of parts of $\beta(\lambda, m)$ congruent to $i \pmod{t}$}.
We will write $\beta(\lambda)$ and $n_{i}(\lambda)$ whenever $m$ is clear from the context. 

For a partition $\lambda \in \P_{tn}$, 
let $\sigma_{\lambda} \in S_{tn}$ be the permutation that rearranges the parts of {$\beta(\lambda) = \beta(\lambda, tn)$} such that
\begin{equation}
\label{sigma-perm}
 \beta_{\sigma_{\lambda}(j)}(\lambda) \equiv a \pmod t, \quad  \sum_{i=0}^{a-1} n_{i}(\lambda)+1 \leq j \leq \sum_{i=0}^{a} n_{i}(\lambda),   
\end{equation}
arranged in decreasing order for each $a \in \{0,1,\dots,t-1\}$. For example, consider $\lambda=(5,2,2,1,1) \in \P_6$ with $t=3$ and $n=2$. Then $\beta(\lambda)=(10,6,5,3,2,0)$ and therefore, $n_0(\lambda) = 3, n_1(\lambda) = 1, n_2(\lambda) = 2$, and 
$\sigma_{\lambda}=246135$ in one-line notation.

Macdonald defines the $t$-core and
$t$-quotient of $\lambda$ using its beta set.  
Suppose $\beta_j^{(i)}(\lambda)$, $1 \leq j \leq n_{i}(\lambda)$ {are} the $n_{i}(\lambda)$ parts of $\beta(\lambda)$ congruent to $i \pmod t$ in decreasing order.
Arrange the $tn$ numbers $tj+i$, where $0 \leq j \leq n_{i}(\lambda)-1$ and $0 \leq i \leq t-1$ in descending order, say $\tilde{\beta}_1>\dots>\tilde{\beta}_{tn}$. 
Then the \emph{$t$-core of} $\lambda$ has parts $\tilde{\beta}_i-tn+i$, $1 \leq i \leq tn$, and is denoted by $\core{\lambda}t$.   
Partitions $\lambda$ that satisfy $\core{\lambda}t = \lambda$ are called \emph{$t$-cores}. 
Let $\Ct$ denote the set of all $t$-cores.

The \emph{$t$-quotient} of $\lambda$ is a certain $t$-tuple of partitions denoted $\quo{\lambda}t = (\lambda^{(0)}, \dots, \lambda^{(t-1)})$, where the $(i+1)$'th element 
is obtained as follows. Write $\beta_j^{(i)}(\lambda)$ in the form $t\tilde{\beta}_j^{(i)}+i$, for $1 \leq j \leq n_{i}(\lambda)$, so that $\tilde{\beta}_1^{(i)}>\dots>\tilde{\beta}_{n_{i}(\lambda)}^{(i)}\geq 0$ and 
define the $i$'th element of the $j$'th tuple of the $t$-quotient as
$\lambda_j^{(i)}=\tilde{\beta}_j^{(i)}-n_{i}(\lambda)+j$. 
Note that the effect of changing $m\geq \ell(\lambda)$ to evaluate $\beta(\lambda)$ is to permute the $\lambda^{(j)}$ cyclically, so that $\quo{\lambda}t$ should perhaps be thought of as a `necklace' of partitions. 
An important fact is that the map from $\P \to \Ct \times \P^{\times t}$ given by
\[
\lambda \mapsto (\core{\lambda}t; \lambda^{(0)}, \dots, \lambda^{(t-1)})
\]
is a bijection. This result is sometimes called the \emph{partition division theorem}~\cite[Theorem 11.22]{loehr-2011},
and the bijection is called the \emph{Littlewood bijection}.
One can also obtain the $t$-core and $t$-quotient directly from its Young diagram; see~\cite{loehr-2011}.

For a partition $\lambda$, let $\rev(\lambda)$ denote the tuple formed by reversing the parts of $\lambda$ and $-\lambda$ denote the tuple whose entries are negatives of those of $\lambda$. Further, for $a \in \mathbb{N}$, let  $a + \lambda$ denote the partition formed by adding $a$ to every part of $\lambda$. Given partitions $\lambda = (\lambda_1, \dots, \lambda_m)$ and $\mu = (\mu_1, \dots, \mu_n)$ such that $\lambda_m \geq \mu_1$, we denote by $(\lambda, \mu)$ the partition obtained by concatenating the parts of $\lambda$ and $\mu$ in that order. Building on this notation, we will often need, for $\lambda, \mu \in \P$ the partition $\mu_1 + (\lambda, 0, \dots, 0, - \rev(\mu))$, where the number of $0$'s is arbitrary.
We will use the notation $(\lambda, -\mu)_n$ to denote such a partition, where the total number of $0$'s is $n - \ell(\lambda) - \ell(\mu)$. 
Note that $n$ must be at least as large as $\ell(\lambda) + \ell(\mu)$.
For example 
\[
((4, 1), -(2, 1, 1))_7 = 2 + (4, 1, 0, 0, -1, -1, -2) = (6, 3, 2, 2, 1, 1, 0).
\]
Notice that the last element of $(\lambda, -\mu)_n$ is always $0$.
We will also use the notation $(\pm \lambda)_n = (\lambda, -\lambda)_n$ often, which is self-dual for all $n \geq 2 \ell(\lambda)$.

\subsection{Classical characters}
The classical {families} of simple Lie groups are classified into the general linear groups $\GL(n)$, the orthogonal groups $\OO(n)$ and the symplectic groups $\Sp(2n)$. The general linear groups are said to belong to type A, the orthogonal groups of odd size, to type B, the symplectic groups, to type C, and the orthogonal groups of even size, to type D. In each of these cases, the irreducible characters are symmetric polynomials or Laurent polynomials in certain variables, and are indexed by partitions.

To define the characters, we need some notation. Fix a positive integer $n$ and
let $X=(x_1,x_2,\dots,x_n)$ be a fixed $n$-tuple of commuting indeterminates throughout. Set $\x = 1/x$ for every commuting indeterminate $x$ and $\X = (\x_1, \dots, \x_n)$. 
Similarly, define $X^m = (x_1^m, \dots, x_n^m)$ for any $m \in \mathbb{Z}$.

We recall that the \emph{complete (homogeneous) symmetric polynomial} indexed by a nonnegative integer $m$ is given by
\begin{equation}
\label{def-hm}
h_m(X) = 
\sum_{1 \leq i_1 \leq i_2 \leq \cdots \leq i_m \leq n} x_{i_1} x_{i_2} \dots x_{i_m},
\end{equation}
where we set $h_0(X) = 1$ and $h_m(X) = 0$ if $m < 0$. 
It is easy to see that $h_m(-X)=(-1)^m h_m(X)$. 
It is a standard fact~\cite[Equation (2.5)]{macdonald-2015} that the ordinary generating function for the complete symmetric polynomials is
\begin{equation}
\label{genfn-h}
\sum_{m \geq 0} h_m(X) q^m= \ds \prod_{i=1}^n  \frac{1}{(1-x_i q)}. 
\end{equation}

\begin{prop} 
\label{prop:compid}
We have, for $m \in \mathbb{N}$,
\begin{equation}
    h_m(X,-1)+h_{m-1}(X,-1)=h_m(X).
\end{equation}    
\end{prop}

\begin{proof}
The {ordinary} generating function of the left hand side is
\begin{align*}
\sum_{m \geq 0} h_m(X,-1) q^m + \sum_{m \geq 0} h_{m-1}(X,-1) q^m= &
\ds \prod_{i=1}^n  \frac{1}{(1+q)(1-x_i q)}
+ \ds \prod_{i=1}^n  \frac{q}{(1+q)(1-x_i q)},
\end{align*}
which simplifies to that of the right hand side using \eqref{genfn-h}.
\end{proof}

For a partition $\lambda=(\lambda_1,\dots,\lambda_n) \in \P_n$,
the \emph{Schur polynomial} or \emph{$\GL(n)$ character} is given by
\begin{equation}
    \label{def:SS-Jcob}
    s_{\lambda}(X) \coloneqq \det\left(h_{\lambda_i-i+j}(X)\right)_{1 \leq i,j \leq n}.
\end{equation}
This is known as the \emph{Jacobi-Trudi identity}, and is 
one of many formulas for the Schur polynomial. See~\cite{stanley-ec2} for other formulas.{
For partitions $\lambda, \mu$ such that $\mu \subset \lambda$, 
the \emph{skew Schur polynomial} is given by
\begin{equation}
    \label{def:SSkew-Jcob}
    s_{\lambda/\mu}(X) \coloneqq \det\left(h_{\lambda_i-\mu_j-i+j}(X)\right)_{1 \leq i,j \leq n}.
\end{equation}
{Note that $s_{\lambda/\mu}$ is zero if $\mu \nsubseteq \lambda$.}
Littlewood's expansion~\cite[Equation (2.9)]{albion-2023} says that
\begin{equation}
\label{littexp}
s_{(\lambda,-\mu)_n}(X)= \sum_{\nu \in \P} (-1)^{|\nu|} s_{\lambda/\nu}(X) s_{\mu/\nu'}(X).
\end{equation}
}

The remaining classical characters are Laurent polynomials in $x_1, \dots, x_n$ that are symmetric both under the exchange of $x_i$ and $x_j$ for $1 \leq i < j \leq n$ as well as $x_i$ and $1/x_i$ for any $i \in \{1,2,\dots,n\}$.
The \emph{odd orthogonal (type B) character} of the group $\OO(2n+1)$ at the representation indexed
by $\lambda=(\lambda_1,\dots,\lambda_n)$ is given by
\begin{equation}
 \label{def:odd-jcob}
  \oo_{\lambda}(X)  = \det \left( h_{\lambda_i-i+j}(X,\X,1)-h_{\lambda_i-i-j}(X,\X,1) \right)_{1 \leq i,j \leq n}.   
\end{equation}
The \emph{symplectic (type C) character} of the group $\Sp(2n)$ at the representation indexed
by $\lambda=(\lambda_1,\dots,\lambda_n)$ is given by
\begin{equation}
\label{def:sp-jcob}
    \sp_{\lambda}(X)
 = \frac{1}{2} \det \left(
     h_{\lambda_i-i+j}(X,\X)+ h_{\lambda_i-i-j+2}(X,\X)  
 \right)_{1 \leq i \leq n}.
\end{equation}
Lastly, the \emph{even orthogonal (type D) character} of the group $\OE(2n)$ at the representation indexed
by $\lambda=(\lambda_1,\dots,\lambda_n)$ is given by
\begin{equation}
\label{def:even-Jcob}
  \oe_{\lambda}(X)=\det \left( h_{\lambda_i-i+j}(X,\X)-h_{\lambda_i-i-j}(X,\X) \right)_{1 \leq i,j \leq n}.  
\end{equation}

\subsection{Universal characters}

The Schur polynomial is itself a universal character because the $\GL(n)$ character $s_\lambda(X)$, when $\ell(\lambda) \leq n-1$ and $x_n$ is set to $0$, becomes the $\GL({n-1})$ character indexed by the same partition $\lambda$. This is not true for the other characters because they are Laurent polynomials in the $x_i$'s and so we cannot set $x_n$ to $0$ there.

Koike and Terada~\cite{koike-terada-1990} defined \emph{universal characters} {(see below)} to address this issue for classical groups of other types. These characters are symmetric polynomials (not Laurent {polynomials}) which reduce correctly.

Suppose $\uX=(x_1,x_2,\dots)$. 
Koike and Terada~\cite{koike-terada-1990} defined the universal character for $\OE({n})$ as
\begin{equation}
\label{def-univo}
\uoe_\lambda(\uX) 
=\det \left(
     h_{\lambda_i-i+j}(\uX)- h_{\lambda_i-i-j}(\uX)  
 \right)_{{1 \leq i,j \leq n}},
\end{equation}
and the universal character for $\Sp({2n})$,
\begin{equation}
\label{def-univsp}
\usp_\lambda(\uX) 
=\frac{1}{2} \det \left(
     h_{\lambda_i-i+j}(\uX)+ h_{\lambda_i-i-j+2}(\uX)  
 \right)_{{1 \leq i,j \leq n}}.
\end{equation}
They then showed that if $\lambda \in \P_n$,
\begin{equation}
\label{uni-class}
\begin{split}
\oe_\lambda(X) = & 
 \uoe_\lambda \left(X, \X, 0, 0, \dots \right), \\
\oo_\lambda(X) = & 
 \uoe_\lambda \left(X, \X, 1, 0, 0, \dots \right), \\
\sp_\lambda(X) = & 
 \usp_\lambda \left(X, \X, 0, 0, \dots \right).
\end{split}
\end{equation}

Koike~\cite[Definition 6.4]{koike-1997} later defined the universal character for the group $\OO({2n+1})$. If {$\lambda \in \P_n
$}, he set
\begin{equation}
\label{def-univso}
\uoo_\lambda(\uX) 
=\det \left(
     h_{\lambda_i-i+j}(\uX)+ h_{\lambda_i-i-j+1}(\uX)  
 \right)_{{1 \leq i,j \leq n}},
\end{equation}
and showed that $\oo_{\lambda}(X)=\uoo_\lambda(X, \X, 0 , 0 ,\dots)$.
Albion~\cite[Section 2.3]{albion-2023} {defined} a variant of the above as
\begin{equation}
\label{soneg-1}
\uoo_\lambda^-(\uX) 
=\det \left(
     h_{\lambda_i-i+j}(\uX)- h_{\lambda_i-i-j+1}(\uX)  
 \right)_{{1 \leq i,j \leq n}},
\end{equation}
and {proved} that
\begin{equation}
\label{soneg}
{\uoo_\lambda^-(\uX)=(-1)^{|\lambda|} \uoo_\lambda(-\uX).}
\end{equation}
{
For partitions $\lambda$ and $\mu$, Koike~\cite{koike1989decomposition} defined a universal character  
of $\GL(n)$ by
\begin{equation}
\label{defrs}
\rs_{\lambda,\mu}(\uX)=\sum_{\nu \in \P} (-1)^{|\nu|}s_{\lambda/\nu}(\uX) s_{\mu/\nu'}(\uX).
\end{equation}
}
If $\ell(\lambda)+\ell(\mu) \leq n$, then by \eqref{littexp}, it is immediate that
\begin{equation}
\label{rs-s}
\rs_{\lambda,\mu}(\uX)=s_{(\lambda,-\mu)_n}(\uX).
\end{equation}
In what follows, it should be understood that a universal character with finitely many variables means that all the other variables are set to $0$.

\subsection{Useful factorizations}

Here we write down all the factorization results we use later in the article. We first begin with those for $\GL(n)$ characters, i.e. Schur polynomials.
Recall that $t \geq 2$ is a positive integer and $\omega$ is a primitive $t$'th root of unity. 
The evaluation of the Schur polynomials at the roots of unity is astonishingly simple.

\begin{thm}[{\cite[Theorem IX]{littlewood-richardson-1934}, \cite[Chapter I.3, Example 17(a)]{macdonald-2015}}]
\label{thm:roots of unity}
Let $\lambda \in \P_t$. Then
\[
s_\lambda(1, \omega, \omega^2, \dots, \omega^{t-1}) = \begin{cases}
(-1)^{\frac{t(t-1)}{2}} \sgn(\sigma_{\lambda}) & \core{\lambda}t = \emptyset, \\
0 & \text{otherwise}.
\end{cases}
\]
\end{thm}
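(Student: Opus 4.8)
The plan is to compute $s_\lambda(1,\omega,\dots,\omega^{t-1})$ directly from the bialternant (Weyl character) formula
\[
s_\lambda(x_1,\dots,x_t)=\frac{\det\left(x_i^{\,\lambda_j+t-j}\right)_{1\le i,j\le t}}{\det\left(x_i^{\,t-j}\right)_{1\le i,j\le t}},
\]
rather than from the Jacobi--Trudi identity \eqref{def:SS-Jcob}. The reason for choosing this form is that the exponents $\lambda_j+t-j$ appearing in the numerator are exactly the parts $\beta_j(\lambda,t)$ of the beta-set, so the combinatorics of residues modulo $t$, and hence of the $t$-core, is visible immediately. Setting $x_i=\omega^{i-1}$ turns the numerator into $\det\left(\omega^{(i-1)\beta_j}\right)$, and since $\omega^t=1$ each entry depends only on $\beta_j\bmod t$. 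Writing $r_j$ for the residue of $\beta_j$ with $0\le r_j\le t-1$, the numerator equals $\det\left(\omega^{(i-1)r_j}\right)$, a Vandermonde-type determinant in the numbers $\omega^{r_1},\dots,\omega^{r_t}$. The denominator is the ordinary Vandermonde $\prod_{1\le i<j\le t}\bigl(\omega^{i-1}-\omega^{j-1}\bigr)$, which is nonzero because $1,\omega,\dots,\omega^{t-1}$ are distinct.

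First I would dispose of the vanishing case. The $t$ beta-set parts are distinct, and $\core{\lambda}t=\emptyset$ holds precisely when each residue $0,1,\dots,t-1$ occurs exactly once among them, i.e.\ when $n_i(\lambda)=1$ for all $i$; this follows from Macdonald's description of the $t$-core recalled above, since an empty core corresponds to the reduced beta-set $\{0,1,\dots,t-1\}$. If $\core{\lambda}t\ne\emptyset$, then some residue is repeated, so two of the $r_j$ coincide, two columns of $\det\left(\omega^{(i-1)r_j}\right)$ are equal, the numerator vanishes, and $s_\lambda(1,\omega,\dots,\omega^{t-1})=0$.

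It remains to treat $\core{\lambda}t=\emptyset$, where $(r_1,\dots,r_t)$ is a permutation of $(0,1,\dots,t-1)$. Let $V_0=\det\left(\omega^{(i-1)(j-1)}\right)$ be the standard Vandermonde in $1,\omega,\dots,\omega^{t-1}$. Both numerator and denominator are column reorderings of $V_0$: permuting the columns of $\det\left(\omega^{(i-1)r_j}\right)$ so that the residue carried by column $j$ becomes $j-1$ multiplies the determinant by the sign of that permutation, and likewise for the denominator, whose column $j$ carries residue $(t-j)\bmod t$. Hence the $V_0$ factors cancel and $s_\lambda(1,\omega,\dots,\omega^{t-1})$ is the product of the two signs. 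The denominator residues run in the order $t-1,t-2,\dots,0$, so sorting them into increasing order is the reversal of $t$ elements, contributing $(-1)^{t(t-1)/2}$. For the numerator, the permutation sending $j$ to the beta-set position of residue $j-1$ is exactly $\sigma_\lambda$: by \eqref{sigma-perm} with $n=1$ and $n_i(\lambda)=1$, $\sigma_\lambda$ is characterized by $\beta_{\sigma_\lambda(a+1)}\equiv a\pmod t$, which is the same reindexing. Multiplying the two signs yields the claimed value $(-1)^{t(t-1)/2}\sgn(\sigma_\lambda)$.

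The main obstacle I anticipate is purely bookkeeping: matching the numerator's column permutation with $\sigma_\lambda$ exactly as defined in \eqref{sigma-perm} (the convention that parts sharing a residue are listed in decreasing order is vacuous here, since each residue occurs once), and fixing the sign of the denominator's reversal without an off-by-one error. As a sanity check I would run $\lambda=\emptyset$, where $\beta(\lambda)=(t-1,\dots,1,0)$, $\sigma_\lambda$ is the reversal with sign $(-1)^{t(t-1)/2}$, and the formula correctly returns $s_\emptyset=1$.
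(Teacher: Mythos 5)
Your proof is correct. The paper states this result without proof, citing Littlewood--Richardson and Macdonald, so there is nothing internal to compare against; your bialternant argument --- reducing exponents modulo $t$ in the numerator, killing the determinant when a residue repeats (equivalently $\core{\lambda}{t}\neq\emptyset$), and otherwise cancelling the Vandermonde up to the sign of the column permutation, which with $n_i(\lambda)=1$ for all $i$ is exactly $\sgn(\sigma_\lambda)$ against the reversal sign $(-1)^{t(t-1)/2}$ from the denominator --- is precisely the standard proof given in \cite[Chapter I.3, Example 17(a)]{macdonald-2015}.
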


We will generalize \cref{thm:roots of unity} to other universal characters in \cref{sec:univ roots of unity} and to classical characters in \cref{sec:class roots of unity}.
The generalization of this result by adding extra variables is classical. A proof in modern language is given in~\cite{ayyer-kumari-2022}.

\begin{thm}[{\cite[Theorem VI]{littlewood-richardson-1934}, \cite[Theorem 2]{prasad-2016}}]
\label{thm:schur-fac}
Let $\lambda \in \P_{tn}$ be a partition  
indexing an irreducible representation of $\GL({tn})$ and $\quo \lambda t  = (\lambda^{(0)},\dots,\lambda^{(t-1)})$.
Then the $\GL({tn})$-character $s_{\lambda}(X,\omega X, \dots ,\omega^{t-1}X)$ is nonzero if and only if $\core \lambda t$ is {empty}. In that case, it is given by 
\begin{equation}
  \label{th:1.2} s_{\lambda}(X,\omega X, \dots ,\omega^{t-1}X) \\ =  (-1)^{\frac{t(t-1)}{2}\frac{n(n+1)}{2}}\sgn(\sigma_{\lambda})\prod_{i=0}^{t-1} s_{\lambda^{(i)}}(X^t).
  \end{equation}
\end{thm}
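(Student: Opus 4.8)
The plan is to prove the factorization by working at the level of the defining Jacobi--Trudi determinant and exploiting the multiplicativity of the complete symmetric polynomials under the root-of-unity substitution. The central observation is the generating-function identity
\[
\prod_{i=1}^{n} \prod_{k=0}^{t-1} \frac{1}{1 - \omega^k x_i q} = \prod_{i=1}^{n} \frac{1}{1 - x_i^t q^t},
\]
which follows from \eqref{genfn-h} together with $\prod_{k=0}^{t-1}(1 - \omega^k z) = 1 - z^t$. Reading off the coefficient of $q^m$ gives $h_m(X, \omega X, \dots, \omega^{t-1} X) = 0$ unless $t \mid m$, in which case it equals $h_{m/t}(X^t)$. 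So under this specialization the matrix entry $h_{\beta_i(\lambda) - (tn - j)}$ (after rewriting the Jacobi--Trudi indices via the beta-set $\beta(\lambda, tn)$) vanishes unless the row-index residue matches the column-index residue modulo $t$.

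First I would rewrite the determinant \eqref{def:SS-Jcob} in terms of the beta-set, so that $s_\lambda(X, \omega X, \dots, \omega^{t-1}X) = \det\bigl(h_{\beta_i(\lambda) - (tn-j)}(X,\omega X,\dots)\bigr)$, and then apply the vanishing criterion above: the $(i,j)$ entry survives only when $\beta_i(\lambda) \equiv tn - j \pmod t$. Grouping rows by the residue class of $\beta_i(\lambda)$ modulo $t$ and columns by $tn - j \pmod t$ shows that, after the permutation $\sigma_\lambda$ of \eqref{sigma-perm} sorts rows into residue blocks, the matrix becomes block-diagonal, with one block for each residue $i \in \{0, \dots, t-1\}$. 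This is exactly the point where the $t$-core must be empty: a nonempty core forces an imbalance in the block sizes $n_i(\lambda)$, making the matrix non-square in at least one block and hence the determinant zero. When $\core{\lambda}{t} = \emptyset$, each block is square of size $n_i(\lambda) = n$.

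Next I would identify each diagonal block with the Jacobi--Trudi determinant for $s_{\lambda^{(i)}}(X^t)$. Writing $\beta_j^{(i)}(\lambda) = t \tilde\beta_j^{(i)} + i$ and using $h_{t\tilde\beta}(X,\omega X,\dots) = h_{\tilde\beta}(X^t)$, the surviving entries in block $i$ are exactly $h_{\tilde\beta_a^{(i)} - (\text{column index})}(X^t)$; the definition $\lambda_j^{(i)} = \tilde\beta_j^{(i)} - n_i(\lambda) + j$ of the $t$-quotient then recasts this block as the beta-set Jacobi--Trudi matrix for $\lambda^{(i)}$ in $n$ variables $X^t$. Taking the product over $i$ yields $\prod_{i=0}^{t-1} s_{\lambda^{(i)}}(X^t)$, up to a global sign coming from (a) the reordering of rows by $\sigma_\lambda$, contributing $\sgn(\sigma_\lambda)$, and (b) the reordering of columns into matching residue blocks, which contributes the purely combinatorial factor $(-1)^{\frac{t(t-1)}{2}\frac{n(n+1)}{2}}$ independent of $\lambda$.

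The main obstacle I anticipate is the sign bookkeeping, specifically verifying that the column permutation (and the internal reordering within each block needed to align beta-set indices with the quotient's Jacobi--Trudi form) produces precisely the exponent $\frac{t(t-1)}{2}\frac{n(n+1)}{2}$ and that no residual $\lambda$-dependent sign survives beyond $\sgn(\sigma_\lambda)$. I would pin this down by computing the sign on a fixed reference partition with empty core (for instance the staircase-type partition whose quotient is a tuple of empty partitions, where the left-hand side is a known constant), thereby fixing the universal constant, and then arguing that changing $\lambda$ within the empty-core class alters both sides by compatible signs tracked entirely by $\sigma_\lambda$. The vanishing half of the statement (nonempty core $\Rightarrow$ zero) is comparatively routine once the block structure is established, as it reduces to a pigeonhole/rank argument on block dimensions.
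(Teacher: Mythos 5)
Your proposal is correct and follows essentially the same route as the standard proof of this classical result: the paper does not reprove \cref{thm:schur-fac} but cites it (and points to the authors' earlier work for ``a proof in modern language''), and that proof is exactly the argument you outline --- reduce to $h_{m}(X,\omega X,\dots,\omega^{t-1}X)=h_{m/t}(X^{t})$ for $t\mid m$ and $0$ otherwise via $\prod_{k=0}^{t-1}(1-\omega^{k}z)=1-z^{t}$, rewrite the Jacobi--Trudi determinant in beta-set form, block-diagonalize by residues mod $t$ (with the pigeonhole argument giving vanishing when $\core{\lambda}{t}\neq\emptyset$), and identify the blocks with the quotient Schur polynomials, the row sort contributing $\sgn(\sigma_{\lambda})$ and the column sort the constant $(-1)^{t(t-1)n(n+1)/4}=\sgn(\sigma_{\emptyset})$.
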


A further generalization of \cref{thm:schur-fac} is as follows.
Let $E = (e_1, \dots, e_m)$ be such that $t-1 \geq e_1 > \cdots > e_m \geq 0$. Define $e_{m+1}, \dots, e_t$ to be the elements of $\{0, \ldots, t-1\} \setminus \{e_1, \dots, e_m\}$ in increasing order.
For a partition $\lambda \in \P_{tn+m}$,  
let $\sigma_{\lambda}^{E}$ be the permutation in $S_{tn+m}$ such that it rearranges parts of $\beta(\lambda)$ in the following way: 
\begin{equation}
\label{sigma-perm-m}
 \beta_{\sigma^E_{\lambda}(j)}(\lambda) \equiv e_q \pmod t, \quad  \text{ for } \quad \sum_{i=1}^{{q-1}} n_{e_i}(\lambda)+1 \leq j \leq \sum_{i=1}^{q} n_{e_i}(\lambda),   
\end{equation}
arranged in decreasing order for each $q \in \{1,\dots,t\}$. For simplicity, we write $\sigma_{\lambda}^{\emptyset}$ as $\sigma_{\lambda}$.

\begin{thm}[{\cite[Theorem 2.2]{kumari-2024}}]
\label{thm:schur-k}
Fix $0 \leq m \leq t-1$. Let $\lambda$ be a partition in $\P_{tn+m}$.
Then the $\GL({tn+m})$-character $s_{\lambda}(X, \omega X,\dots,\omega^{t-1} X, y,\omega y, \dots, \omega^{m-1}y )$ is nonzero if and only if $\core \lambda t= \nu \coloneqq (\nu_1,\dots,\nu_m)$ with $\nu_1\leq t-m$. In that case,
    \begin{equation}
    \label{schur-k}
    \begin{split}
   s_{\lambda}(X, \omega X,\dots,\omega^{t-1} X,  y,\omega y, \dots,  \omega^{m-1}y) 
       =&  \sgn(\sigma_{\lambda}^{\beta(\nu)})
       \sgn(\sigma_{\emptyset}^{\beta(\nu)}) 
        s_{\nu}(1,\omega, \dots,\omega^{m-1})\\
    & \times   
       \prod_{i=1}^{m} s_{\lambda^{(\beta_i(\nu))}} (X^t,y^t) \prod_{\substack{j=0\\ j \neq \beta_i(\nu),1 \leq i \leq m}}^{t-1}  s_{\lambda^{(j)}} (X^t),    
        \end{split}
    \end{equation}
where we recall that the $t$-quotient of $\lambda$ is $(\lambda^{(0)}, \dots, \lambda^{(t-1)})$.
\end{thm}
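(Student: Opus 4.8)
The plan is to evaluate the specialization through the bialternant (Weyl character) formula rather than through the Jacobi--Trudi determinant, since under the root-of-unity substitution the $X$-block collapses completely but the $y$-block does not, and the bialternant keeps both blocks transparent. Writing $N = tn+m$ and letting $z$ run over the $N$ specialized values $\{x_i\omega^k : 1\le i\le n,\ 0\le k\le t-1\}\cup\{y\omega^\ell : 0\le \ell\le m-1\}$, I would start from
\[
s_\lambda = \frac{\det\bigl(z^{\beta_r(\lambda,N)}\bigr)}{\det\bigl(z^{N-r}\bigr)},
\]
with rows indexed by the beta-set parts $\beta_r(\lambda,N)$ and columns by $z$. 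The engine of the computation is that a column $z = x_i\omega^k$ raised to a part $\beta\equiv a\pmod t$ equals $\omega^{ka}\,x_i^{a}\,(x_i^t)^{(\beta-a)/t}$, so that once the rows are grouped by residue the factors $\omega^{ka}$ expose a discrete-Fourier structure across the $t$ columns attached to each $x_i$. This is exactly the mechanism underlying \cref{thm:schur-fac}, and the present argument should reduce to it when $m=0$.

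Next I would sort the beta-set parts by residue modulo $t$, using the ordering prescribed by $E=\beta(\nu)$ in \eqref{sigma-perm-m}, which introduces the permutation $\sigma_\lambda^{\beta(\nu)}$ and, applied to the denominator staircase, $\sigma_\emptyset^{\beta(\nu)}$. After sorting, the numerator becomes block-structured by residue class $a\in\{0,\dots,t-1\}$: the beta-parts of residue $a$ contribute entries $(x_i^t)^\gamma$ and $(y^t)^\gamma$ against the $X$- and $y$-columns respectively, where $\gamma=(\beta-a)/t$. A Cauchy--Binet / generalized Laplace expansion then decouples the determinant across residue classes, with the full Vandermonde in the denominator splitting into per-residue Vandermondes together with the cross-factors carried by the $y$-block. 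For each residue $a$ the $X$-columns supply an $n\times n$ Vandermonde-type minor in $x_1^t,\dots,x_n^t$ which, after dividing by its piece of the denominator, is precisely $s_{\lambda^{(a)}}(X^t)$. The residues that additionally receive one of the $m$ $y$-columns are exactly $\beta_1(\nu)>\cdots>\beta_m(\nu)$, so those classes acquire an enriched Vandermonde in $x_1^t,\dots,x_n^t,y^t$, upgrading $s_{\lambda^{(\beta_i(\nu))}}(X^t)$ to $s_{\lambda^{(\beta_i(\nu))}}(X^t,y^t)$, while the residual $\omega$-dependent minor $\bigl(\omega^{\ell\beta_i(\nu)}\bigr)_{0\le\ell\le m-1,\,1\le i\le m}$ collapses to the scalar $s_\nu(1,\omega,\dots,\omega^{m-1})$.

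The non-vanishing criterion then falls out of a counting argument: the decoupling above is possible exactly when every residue class has $n$ beta-parts, except for $m$ classes which have $n+1$ and absorb the $y$-columns. Translating the residue multiplicities $n_i(\lambda)$ back through the Littlewood bijection, this says that $\core{\lambda}{t}$ has at most $m$ parts and that its beta-set $(\beta_1(\nu),\dots,\beta_m(\nu))$ lies in $\{0,\dots,t-1\}$, i.e. $\nu_1\le t-m$; any other residue profile forces a dimension mismatch between columns and rows in some block, producing a vanishing minor and hence $s_\lambda=0$.

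The main obstacle I anticipate is the sign bookkeeping. Both the residue-sorting of the beta-set of $\lambda$ and of the denominator staircase must be tracked through \eqref{sigma-perm-m}, and showing that the combined sign is exactly $\sgn(\sigma_\lambda^{\beta(\nu)})\,\sgn(\sigma_\emptyset^{\beta(\nu)})$, rather than some uncontrolled power of $-1$ depending on $n$, $m$ and $\nu$, requires matching the two permutations block by block. A secondary technical point is verifying that the residual $\omega$-minor from the $y$-block is genuinely $s_\nu(1,\omega,\dots,\omega^{m-1})$ and not merely proportional to it; this is cleanest to check by recognizing that minor as the bialternant in the variables $1,\omega,\dots,\omega^{m-1}$ with exponent set $\beta(\nu,m)$, whose Vandermonde denominator is supplied precisely by the $\prod_{\ell<\ell'}(\omega^\ell-\omega^{\ell'})$ factor separated off from the big denominator. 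As a consistency check, setting $m=0$ collapses the scalar to $1$, removes all $y$-enrichments, and recovers \cref{thm:schur-fac} verbatim.
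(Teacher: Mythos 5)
This theorem is not proved in the paper at all: it is imported verbatim from \cite[Theorem 2.2]{kumari-2024} as background, so there is no in-paper proof to compare against. Your proposal follows what is essentially the standard (and, in the cited source, the actual) route for results of this type: specialize the bialternant, sort the $\beta$-set by residue modulo $t$, exploit the discrete-Fourier structure of the $t$ columns attached to each $x_i$ to decouple the determinant into per-residue alternants, and treat the $m$ $y$-columns by a generalized Laplace expansion; the ratio of the numerator and denominator $\omega$-minors is then the bialternant for $s_\nu(1,\omega,\dots,\omega^{m-1})$ with exponent set $\beta(\nu,m)$, exactly as you say. The architecture is right and would recover \cref{thm:schur-fac} at $m=0$.

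Two places in your sketch are thinner than they should be. First, for the ``only if'' direction your phrase ``dimension mismatch'' only disposes of residue classes with $n_a(\lambda)<n$ (too few rows for the $n$ decoupled $X$-columns of that residue). To rule out $n_a(\lambda)\geq n+2$ you must note that such a class is then forced to donate at least two rows to the $y$-block, and the corresponding two rows of the $m\times m$ $\omega$-minor are both scalar multiples of the vector $(\omega^{\ell a})_{0\le\ell\le m-1}$, hence proportional; it is this proportionality, not a dimension count, that kills those terms. Second, when the profile is admissible, the Laplace expansion along the $y$-columns is a sum over \emph{which} row of each overfull class is donated, so you do not immediately get a single product. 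You need to observe that, for each overfull residue $a=\beta_i(\nu)$, the signed sum over the donated row $j_0$ of $(y^t)^{\gamma_{j_0}}\det\bigl((x_k^t)^{\gamma_j}\bigr)_{j\neq j_0,\,k}$ is precisely the Laplace expansion (along the $y^t$-column) of the $(n+1)\times(n+1)$ alternant in $x_1^t,\dots,x_n^t,y^t$ with exponents $\gamma_1>\dots>\gamma_{n+1}$, which after division by its Vandermonde yields $s_{\lambda^{(\beta_i(\nu))}}(X^t,y^t)$. Both points are routine to repair, and together with the sign bookkeeping you already flag, they complete the argument.
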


We now move on to factorization of universal characters
of other types.

\begin{thm}[{\cite[Theorem 3.3]{albion-2023}}]
\label{thm:sympfact} 
Let $\lambda$ be a partition in $\P_{tn}$.
Then the universal character ${\usp}_{\lambda}(\uX,\omega \uX, \allowbreak \dots ,\omega^{t-1}\uX)$ is nonzero if and only if $\core \lambda t$ is symplectic. In that case, if $\core \lambda t$ has rank $r$, then 
\begin{equation}
    \begin{split}
      \label{symp-fact-2}
{\usp}_{\lambda}(\uX,\omega \uX, \dots ,\omega^{t-1}\uX) =  (-1)^{\epsilon} \sgn (\sigma_{\lambda})\, {\usp}_{\lambda^{(t-1)}}(\uX^t) 
 & \prod_{i=0}^{\floor{{(t-3)}/{2}}} 
 \rs_{\lambda^{(i)},\lambda^{(t-2-i)}}(\uX^t) \\ & \times 
\begin{cases}
{\uoo}_{\lambda^{({(t-2)}/{2})}}(\uX^t) 
& t \text{ even} ,\\
1 & t \text{ odd},
\end{cases}  
    \end{split}
\end{equation}
where 
\[
\epsilon= \ds -\sum_{i=\floor{{t}/{2}}}^{t-2} 
\binom{n_{i}(\lambda)+1}2 + \begin{cases}
\frac{n(n+1)}{2}+nr & t \text{ even},
\\0  & t \text{ odd}.
\end{cases}
\]
\end{thm}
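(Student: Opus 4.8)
The plan is to work directly with the Jacobi--Trudi determinant \eqref{def-univsp} for $\usp_\lambda$, read as a $tn\times tn$ determinant, and to specialize every complete symmetric polynomial at the root-of-unity alphabet. The first step is the elementary but decisive reduction coming from \eqref{genfn-h} and the identity $\prod_{k=0}^{t-1}(1-\omega^k xq)=1-x^tq^t$, namely $h_m(\uX,\omega\uX,\dots,\omega^{t-1}\uX)=h_{m/t}(\uX^t)$ when $t\mid m$ and $0$ otherwise. Substituting this into \eqref{def-univsp}, the $(i,j)$ entry $h_{\lambda_i-i+j}(\cdots)+h_{\lambda_i-i-j+2}(\cdots)$ survives only when $\lambda_i-i+j\equiv 0$ or $\lambda_i-i-j+2\equiv 0\pmod t$, so the matrix becomes sparse, with its support dictated purely by the residues mod $t$ of the beta-set $\beta(\lambda,tn)$.

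Next I would sort the rows by the residue $\ell_i:=\lambda_i-i\equiv\beta_i(\lambda,tn)\pmod t$ and the columns by $j\bmod t$; the row sorting is exactly $\sigma_\lambda$ and contributes $\sgn(\sigma_\lambda)$, while the (partition-independent) column sorting contributes a sign to be folded into $\epsilon$. A row of residue $a$ has its first term in column-residue $-a$ and its second in column-residue $a+2$, so a column is fed precisely by the two row-residues related by the involution $a\mapsto -a-2\pmod t$. Hence, after sorting, the determinant is block-diagonal along the orbits of this involution. A genuine pair $\{a,-a-2\}$ yields a $2\times 2$ arrangement of $h$-blocks which, after reversing one column-class and dividing all indices by $t$, is the Jacobi--Trudi determinant of $(\lambda^{(a)},-\lambda^{(-a-2)})$, equal to $\rs_{\lambda^{(a)},\lambda^{(-a-2)}}(\uX^t)$ by \eqref{rs-s}. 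The two fixed points give self-coupled blocks whose $h_{\cdot}+h_{\cdot}$ shape reproduces, under the same rescaling, the symplectic form \eqref{def-univsp} at $a\equiv t-1$ (a direct index check gives $h_{\tilde\ell+b-1}+h_{\tilde\ell-b+1}$, i.e. $\usp_{\lambda^{(t-1)}}(\uX^t)$, absorbing the global $\tfrac12$) and, when $t$ is even, the odd-orthogonal form \eqref{def-univso} at $a=(t-2)/2$ (the shifts produce $h_{\tilde\ell+b}+h_{\tilde\ell-b+1}$, i.e. $\uoo_{\lambda^{((t-2)/2)}}(\uX^t)$). Matching the block rows and columns to the quotient data via $\lambda_j^{(i)}=\tilde\beta_j^{(i)}-n_i(\lambda)+j$ is the standard step already used for \cref{thm:schur-fac}.

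For the vanishing criterion I would observe that the determinant is nonzero iff every block is square. Since each column-residue class has exactly $n$ elements, a pair block $\{a,-a-2\}$ is square iff $n_a(\lambda)+n_{-a-2}(\lambda)=2n$ and each fixed block iff $n_a(\lambda)=n$. I would then show this system on the residue multiplicities of $\beta(\lambda,tn)$ is exactly the condition that $\core{\lambda}{t}$ be symplectic, using the beta-set/abacus description of symplectic partitions $(\alpha\mid\alpha+1)$, with the imbalances $n_a(\lambda)-n$ recording the rank $r$ of the core. This parallels the empty-core case of \cref{thm:schur-fac}, where all $n_a(\lambda)=n$.

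Finally I would assemble $\epsilon$. It collects the column-sorting sign relative to $\sigma_\lambda$, the reversal of one column-class in each pair block and each fixed block needed to pass from the anti-diagonal $h_{p\pm q}$ pattern to honest Jacobi--Trudi form (each reversal of a class of size $n_a(\lambda)$ contributing a sign of the shape $(-1)^{\binom{n_a(\lambda)+1}{2}}$, which is the origin of $-\sum_{i=\floor{t/2}}^{t-2}\binom{n_i(\lambda)+1}{2}$), and, in the even case, the cost of passing through the variant $\uoo^-$ form via \eqref{soneg} together with the half-index shifts, producing the $\tfrac{n(n+1)}{2}+nr$ term. I expect this sign bookkeeping to be the main obstacle: one must keep the row/column reindexings, the within-block reversals, and the two normalizations ($\tfrac12$ on the symplectic side, full determinants elsewhere) mutually consistent and show the accumulated sign collapses to the stated closed form; the simpler computation underlying \cref{thm:schur-fac} is the template to imitate.
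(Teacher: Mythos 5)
This statement is imported verbatim from Albion~\cite[Theorem 3.3]{albion-2023}; the paper gives no proof of it, so there is no internal proof to compare against. Your outline --- reduce $h_m$ at the twisted alphabet to $h_{m/t}(\uX^t)$ or $0$, sort rows by $\sigma_\lambda$ and columns by residue, decompose the sparse determinant along the orbits of $a\mapsto -a-2 \pmod t$, identify the pair blocks with $\rs_{\lambda^{(a)},\lambda^{(-a-2)}}(\uX^t)$ and the fixed points $a=t-1$ and $a=(t-2)/2$ with $\usp$ and $\uoo$ respectively, and read the squareness conditions $n_a(\lambda)+n_{-a-2}(\lambda)=2n$, $n_{t-1}(\lambda)=n$ as the symplectic-core criterion --- is exactly the strategy of the cited proof and of the classical-character analogues in~\cite{ayyer-kumari-2022}, and the index checks you record for the two fixed-point blocks are correct. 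The two items you leave as remaining work (the beta-set characterization of symplectic cores, with the rank $r$ entering through the residue imbalances, and the closed form of the accumulated sign $\epsilon$) are genuinely the only nontrivial bookkeeping and are carried out in those sources; nothing in your sketch is off track.
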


\begin{thm}[{\cite[Theorem 3.2]{albion-2023}}]
\label{thm:eorthfact}
Let $\lambda$ be a partition in $\P_{tn}$.
Then the universal character $\uoe_{\lambda}(\uX,\omega \uX, \dots ,\omega^{t-1}\uX)$ 
is  nonzero if and only if $\core \lambda t$ is orthogonal. In that case, if $\core \lambda t$ has rank $r$, then 
\begin{equation}
    \begin{split}
     \label{th:5.2}  
     \uoe_{\lambda}(\uX,\omega \uX, \dots ,\omega^{t-1}\uX)  =  (-1)^{\epsilon}
     \sgn
(\sigma_{\lambda}) \,
 & \uoe_{\lambda^{(0)}}(\uX^t)   
 \prod_{i=1}^{\floor{{(t-1)}/{2}}} 
 \rs_{\lambda^{(i)},\lambda^{(t-i)}}(\uX^t) \\&  \times 
\begin{cases}
\uoo^{-}_{\lambda^{(t/2)}}(\uX^t) & t \text{ even},\\
1 & t \text{ odd},
\end{cases}   
    \end{split}
\end{equation}
where 
\[
\epsilon=-\sum_{i=\floor{{(t+2)}/{2}}}^{t-1} 
\binom{n_{i}(\lambda)+1}{2} +r+ \begin{cases}
\frac{n(n+1)}{2}+nr & t \text{ even}, \\ 
0 & t \text{ odd}.
\end{cases}
\]

\end{thm}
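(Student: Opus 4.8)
The plan is to mirror the proof of the symplectic factorization in \cref{thm:sympfact}, replacing the determinant \eqref{def-univsp} by the type D determinant \eqref{def-univo} and then tracking the structural changes that this substitution forces. The engine of every such factorization is a single specialization identity for the complete homogeneous symmetric polynomials: from the product formula \eqref{genfn-h} one has
\[
\prod_{k=0}^{t-1}\bigl(1-\omega^k x q\bigr) = 1-x^t q^t,
\]
so that $h_m(\uX,\omega\uX,\dots,\omega^{t-1}\uX)$ vanishes unless $t\mid m$, in which case it equals $h_{m/t}(\uX^t)$. Substituting this into each entry of the $tn\times tn$ determinant \eqref{def-univo} evaluated at $(\uX,\omega\uX,\dots,\omega^{t-1}\uX)$, the $(i,j)$ entry $h_{\lambda_i-i+j}-h_{\lambda_i-i-j}$ is nonzero only when $\lambda_i-i+j\equiv 0$ or $\lambda_i-i-j\equiv 0 \pmod t$. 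Since $\beta_i(\lambda,tn)\equiv \lambda_i-i \pmod t$, the sparsity pattern of the matrix is governed entirely by the residues of the beta-set modulo $t$, which is exactly the data recorded by $\core{\lambda}{t}$ and $\quo{\lambda}{t}$.

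First I would permute the rows by $\sigma_\lambda$ of \eqref{sigma-perm}, grouping rows whose beta-value shares a residue $a$ and ordering them decreasingly inside each group; this contributes the global sign $\sgn(\sigma_\lambda)$. Grouping the columns analogously by residue yields a $t\times t$ array of blocks in which the block in row-group $a$, column-group $c$ is nonzero only if $c\equiv \pm a \pmod t$. The residue $a=0$ is self-paired with both the $+j$ and $-j$ contributions surviving, and after rewriting each index $(\lambda_i-i\pm j)/t$ in terms of $\lambda^{(0)}$ it reproduces the type D determinant, i.e. the factor $\uoe_{\lambda^{(0)}}(\uX^t)$. For $1\le i\le \floor{(t-1)/2}$ the residues $i$ and $t-i$ couple: row-group $i$ meets column-group $i$ through the $-j$ term and column-group $t-i$ through the $+j$ term (and symmetrically), so the combined $2\times 2$ super-block is a Jacobi--Trudi determinant for $s_{(\lambda^{(i)},-\lambda^{(t-i)})_N}$, which by \eqref{rs-s} is $\rs_{\lambda^{(i)},\lambda^{(t-i)}}(\uX^t)$. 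Finally, when $t$ is even the residue $t/2$ is self-paired, but because its columns carry the half-period offset $j\equiv t/2$, the two indices $(\lambda_i-i+j)/t$ and $(\lambda_i-i-j)/t$ acquire the unit shift that turns the block into the determinant defining $\uoo^{-}$ (cf. \eqref{def-univso}), producing the factor $\uoo^{-}_{\lambda^{(t/2)}}(\uX^t)$; one may then invoke \eqref{soneg} if a $\uoo$ is preferred.

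The nonvanishing criterion and the sign $\epsilon$ are where the real work lies. The determinant is nonzero precisely when every residue block is square and individually nonzero, and I would show that this balance of the residue counts $n_{i}(\lambda)$ is equivalent to $\core{\lambda}{t}$ being orthogonal, i.e. of the form $(\alpha+1\mid\alpha)$ in Frobenius coordinates, with the rank $r$ of the core recording the surplus that forces the $+r$ and $nr$ terms. The sign $\epsilon$ then accumulates from three sources: the reversal of column order needed to bring each $+j$-coupled block into standard Jacobi--Trudi form, contributing the binomial sum $-\sum_{i=\floor{(t+2)/2}}^{t-1}\binom{n_{i}(\lambda)+1}{2}$ over the upper half of the paired residues; the rank term $r$; and, in the even case, the term $\frac{n(n+1)}{2}+nr$ coming from the self-paired $t/2$-block together with the global parity of the splitting. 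I expect the main obstacle to be exactly this bookkeeping --- pinning down the orientation (which of $\lambda^{(i)},\lambda^{(t-i)}$ is the reversed part in $\rs$), the precise unit shift in the $\uoo^{-}$ block, and the aggregation of all reversal and sorting signs into the closed form for $\epsilon$ --- rather than the algebraic splitting itself, which is routine once the specialization lemma is in hand.

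A cleaner but sign-sensitive alternative would exploit the fact that orthogonal and symplectic partitions are conjugate to one another: applying the involution $h_r\mapsto e_r$ turns \eqref{def-univo} into the dual Jacobi--Trudi determinant for $\usp_{\lambda'}$, and conjugation acts in a controlled way on $\core{\lambda}{t}$ and $\quo{\lambda}{t}$, so that \cref{thm:sympfact} could in principle be transported to the present statement. I would keep this only as a consistency check, since tracking the signs through the involution is at least as delicate as computing them directly.
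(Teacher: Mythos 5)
First, a point of reference: the paper does not prove \cref{thm:eorthfact} at all --- it is quoted as \cite[Theorem 3.2]{albion-2023} in the background section and used as a black box --- so there is no internal proof to compare against. Your outline does reproduce the strategy of the cited source and of the analogous classical-character factorizations: the specialization $h_m(\uX,\omega\uX,\dots,\omega^{t-1}\uX)=h_{m/t}(\uX^t)$ for $t\mid m$ and $0$ otherwise, the resulting residue-class block structure of the determinant \eqref{def-univo}, the identification of the self-paired residue-$0$ block with $\uoe_{\lambda^{(0)}}(\uX^t)$, of the paired blocks with $\rs_{\lambda^{(i)},\lambda^{(t-i)}}(\uX^t)$ via \eqref{rs-s}, and of the residue-$t/2$ block with $\uoo^{-}_{\lambda^{(t/2)}}(\uX^t)$ (your observation that the index gap there becomes the odd number $2j'-1$, matching \eqref{def-univso} with a sign, is exactly right). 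The skeleton is sound.

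As a proof, however, the proposal stops short of the two places where the content of the theorem actually lives. (i) The nonvanishing criterion: you assert, but do not prove, that balance of the residue counts is equivalent to $\core \lambda t$ being orthogonal, and your gloss that the rank $r$ ``records the surplus'' in these counts is incorrect --- when the core is orthogonal the blocks are \emph{exactly} square ($n_0(\lambda)=n$, $n_{t/2}(\lambda)=n$ for $t$ even, and $n_i(\lambda)+n_{t-i}(\lambda)=2n$ otherwise). For instance, $t=3$, $n=1$, $\lambda=(2)$ has orthogonal core of rank $r=1$ yet $n_0(\lambda)=1=n$. The rank enters not through block sizes but through the sign normalization of the self-paired blocks (which rows of the residue-$0$ block have their surviving entry coming from the $-h_{\lambda_i-i-j}$ term), and that is what produces the $+r$ and $nr$ terms. (ii) The sign $\epsilon$ is part of the statement and is never computed: attributing $-\sum\binom{n_i(\lambda)+1}{2}$ solely to ``reversal of column order'' misses the contributions from the $-h$ entries inside the paired blocks and from the interleaving of the two column residue classes, and you explicitly defer all of this bookkeeping. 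A single sanity check, e.g.\ $\uoe_{(2)}(\uX,\omega\uX,\omega^2\uX)=h_2(\uX,\omega\uX,\omega^2\uX)-1=-1$ for $t=3$, already forces the factor $(-1)^r$ and would anchor the computation you leave open. So: right method, genuinely incomplete execution at exactly the quantitative claims the theorem makes.
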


\begin{thm}[{\cite[Theorem 2.13]{kumari-2024}}]
\label{thm:evenfac-1}
    Let $\lambda \in \P_{tn+1}$ 
    such that $\core \lambda t$ is symplectic. The even orthogonal character $\oe_{\lambda}(X,\, \omega X, \dots,\omega^{t-1}X,1)$ is given by 
    \begin{equation}
    \label{even--1}
    \begin{split}
   \oe_{\lambda}(X,\omega X, \dots ,\omega^{t-1}X,1) =
 (-1)^{\epsilon} \, \sgn(\sigma_{\lambda}) & \, \oe_{\lambda^{(0)}}(X^t,1)  
 \prod_{i=1}^{\floor{{(t-1)}/{2}}} 
 s_{(\lambda^{(i)}, -\lambda^{(t-i)})_{2n}}(X^t,\X^t) 
 \\
 \times & \begin{cases}
(-1)^{\sum_i \lambda_i^{(t/2)}} \oo_{\lambda^{(t/2)}}(-X^t)& t \text{ even,}\\
1 & t \text{ odd,}
\end{cases}        
\end{split}
    \end{equation}
where     
 \begin{equation*}
\epsilon =
\ds \sum_{i=\floor{{(t+2)}/{2}}}^{t-1}  \left( \binom{n_i(\lambda)}{2} +
(t-1) n (n_{i}(\lambda)-n) \right).
\end{equation*}
\end{thm}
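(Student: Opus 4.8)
The plan is to derive this classical even-orthogonal factorization from the universal even-orthogonal factorization in \cref{thm:eorthfact} by specializing the infinitely many variables of $\uX$ appropriately. The key observation is that $\oe_{\lambda}(X,\omega X,\dots,\omega^{t-1}X,1)$ arises from evaluating the universal character by setting the variables to $(X,\omega X,\dots,\omega^{t-1}X,1,\X,\omega^{-1}\X,\dots,\omega^{-(t-1)}\X,1,0,0,\dots)$, since \eqref{uni-class} tells us that $\oe_\lambda(Y) = \uoe_\lambda(Y,\overline{Y},0,0,\dots)$ for $Y=(X,\omega X,\dots,\omega^{t-1}X,1)$. The subtlety is that the reciprocals $\overline{\omega^j X}=\omega^{-j}\X$ also form a complete set of $t$-th-root-of-unity twists of $\X$ (since $\{\omega^{-j}\}=\{\omega^j\}$ as sets), together with the two fixed variables $1$ and $\overline{1}=1$. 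So the full evaluation point is itself of the twisted form handled by \cref{thm:eorthfact}, but with the base alphabet $\uX^t$ replaced by $(X^t,\X^t,1)$ and an extra pair of $1$'s to track.

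First I would carefully arrange the $2tn+2$ evaluation variables into their $t$ congruence classes under the $\omega$-twist and match them to the hypotheses of \cref{thm:eorthfact}, keeping in mind that $\core\lambda t$ is assumed \emph{symplectic} here rather than orthogonal; this forces a shift in how the quotient pieces pair up (the symplectic core means $\lambda^{(0)}$ plays a distinguished role and the central block, when $t$ is even, becomes orthogonal-type). Next I would apply \cref{thm:eorthfact} to the appropriately reindexed partition, obtaining a product of $\uoe$, $\rs$, and $\uoo^-$ factors evaluated at the base alphabet. Then I would specialize each universal factor back to a classical character using \eqref{uni-class}: the factor $\uoe_{\lambda^{(0)}}$ at base $(X^t,\X^t,1,0,\dots)$ becomes $\oe_{\lambda^{(0)}}(X^t,1)$ via the type-B/D reduction, each $\rs_{\lambda^{(i)},\lambda^{(t-i)}}$ becomes the Schur polynomial $s_{(\lambda^{(i)},-\lambda^{(t-i)})_{2n}}(X^t,\X^t)$ by \eqref{rs-s}, and the central $\uoo^-$ factor collapses, via \eqref{soneg} together with the identity $h_m(-X)=(-1)^m h_m(X)$, into the stated $(-1)^{\sum_i\lambda_i^{(t/2)}}\oo_{\lambda^{(t/2)}}(-X^t)$ term.

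The main obstacle I expect is the sign bookkeeping. There are several competing contributions: the sign $(-1)^\epsilon\sgn(\sigma_\lambda)$ coming out of \cref{thm:eorthfact}, the reindexing permutation needed to match the reciprocal alphabet $\overline{\omega^j X}=\omega^{-j}\X$ to the standard increasing order of residues (this is where the $\binom{n_i(\lambda)}{2}$ and the $(t-1)n(n_i(\lambda)-n)$ terms in the target $\epsilon$ should emerge, the latter reflecting transpositions between the $n$ copies of $X^t$ and their reciprocals across residue classes), and the sign from converting $\uoo^-$ to $\uoo$ and then to $\oo(-X^t)$. Reconciling the rank term $r$ and the parity contributions from \cref{thm:eorthfact}'s $\epsilon$ against the clean formula stated here will require a delicate comparison of the two permutation signs $\sgn(\sigma_\lambda)$ relative to the symplectic versus orthogonal core convention. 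I would isolate this as a lemma on the sign of the rearrangement permutation and verify it on a small example (say $t=2$ or $t=3$) before asserting the general identity.
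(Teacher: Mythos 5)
This theorem is quoted from \cite[Theorem 2.13]{kumari-2024} and is not proved in the present paper, so there is no internal proof to compare against; but your proposed derivation has a genuine gap at its very first step. You claim that the evaluation point $(X,\omega X,\dots,\omega^{t-1}X,1,\X,\omega^{-1}\X,\dots,\omega^{-(t-1)}\X,1)$ ``is itself of the twisted form handled by \cref{thm:eorthfact}.'' It is not, for any $t>2$: the $2tn$ variables $\{\omega^j X\}\cup\{\omega^j\X\}$ do assemble into complete orbits $(W,\omega W,\dots,\omega^{t-1}W)$ with $W=(X,\X)$, but the two leftover $1$'s do not form a complete $\omega$-orbit of any base variable, so the full alphabet is not of the form $(\uX',\omega\uX',\dots,\omega^{t-1}\uX')$ and \cref{thm:eorthfact} simply does not apply. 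Contrast this with the proof of \cref{thm:even} in this paper, where the extra roots of unity $\omega,\omega^3,\dots,\omega^{t/2-1}$ are engineered precisely so that, together with their reciprocals, they complete an orbit and $(X_\omega,\X_\omega)=(Z,\omega^2Z,\dots,\omega^{t-2}Z)$; no such completion is available here.

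The clearest symptom that the reduction cannot be repaired by ``reindexing'' is the vanishing condition. \cref{thm:eorthfact} says $\uoe_\lambda$ at a fully twisted alphabet vanishes unless $\core\lambda t$ is \emph{orthogonal}, and that condition depends only on $\lambda$, not on the base alphabet; whereas \cref{thm:evenfac-1} asserts a generically nonzero factorization under the hypothesis that $\core\lambda t$ is \emph{symplectic}. These are disjoint classes of nonempty cores (of types $(\alpha+1\mid\alpha)$ versus $(\alpha\mid\alpha+1)$), so a direct application of \cref{thm:eorthfact} would return $0$ in exactly the regime the theorem addresses. The extra untwisted variable $1$ genuinely changes which cores survive, just as in \cref{thm:schur-k} the incomplete orbit $y,\omega y,\dots,\omega^{m-1}y$ changes the condition from ``empty core'' to ``core $\nu$ with $\nu_1\le t-m$.'' A correct proof must treat the extra variable separately (e.g.\ via the determinantal definition \eqref{def:even-Jcob} and a block decomposition of the beta-numbers by residue, or a branching argument in the spirit of \eqref{split-ortho}), which is what the cited source does; your subsequent sign bookkeeping, however carefully done, cannot rescue a first step that yields the zero polynomial.
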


\begin{thm}[{\cite[Theorem 3.4]{albion-2023}}]
    \label{thm:oorthfact}
Let $\lambda$ be a partition in $\P_{tn}$. 
Then the universal character $\uoo_\lambda(\uX,\omega \uX, \dots ,\omega^{t-1}\uX)$ is  nonzero if and only if $\core \lambda t$ is self-conjugate. In that case, if $\core \lambda t$ has rank $r$, then 
\begin{equation}
   \begin{split}
     \label{th:3.2} \uoo_\lambda(\uX,\omega \uX, \dots ,\omega^{t-1}\uX)  =  (-1)^{\epsilon}\sgn(\sigma_{\lambda}) 
\prod_{i=0}^{\floor{{(t-2)}/{2}}} 
&  \rs_{\lambda^{(i)},\lambda^{(t-1-i)}}(\uX^t) \\ & 
\times 
\begin{cases}
\uoo_{\lambda^{({(t-1)}/{2})}}(\uX^t) 
& t \text{ odd},\\
1 & t \text{ even},
\end{cases}   
    \end{split}
\end{equation}
where 
\[
\epsilon=- \ds \sum_{i=\floor{{(t+1)}/{2}}}^{t-1} \binom{n_{i}(\lambda)+1}{2} + \begin{cases}
nr & t \text{ odd},
\\ 0 & t \text{ even}.
\end{cases}
\]

\end{thm}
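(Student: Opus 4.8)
The plan is to follow the determinant-manipulation template used for the symplectic and even orthogonal universal characters in \cref{thm:sympfact} and \cref{thm:eorthfact}, adapting it to the type B reflection $j \mapsto 1-j$ encoded by the shift $+1$ and the $+$ sign in \eqref{def-univso}. First I would expand $\uoo_\lambda(\uX, \omega\uX, \dots, \omega^{t-1}\uX)$ as the $tn \times tn$ Jacobi--Trudi determinant from \eqref{def-univso}, with entries $h_{\lambda_i - i + j}(\uX') + h_{\lambda_i - i - j + 1}(\uX')$, where $\uX' = (\uX, \omega\uX, \dots, \omega^{t-1}\uX)$. The basic specialization input is the identity $\prod_{k=0}^{t-1}(1 - \omega^k x q) = 1 - x^t q^t$, which applied to the generating function \eqref{genfn-h} yields $h_m(\uX') = h_{m/t}(\uX^t)$ when $t \mid m$ and $h_m(\uX') = 0$ otherwise. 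Consequently the $(i,j)$ entry survives only when $\lambda_i - i + j \equiv 0$ or $\lambda_i - i - j + 1 \equiv 0 \pmod t$, so only certain residue classes of rows and columns interact.

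Next I would organize the surviving entries by residues modulo $t$. Writing $\beta_i(\lambda) \equiv \lambda_i - i \pmod t$, a row of residue $r$ contributes its first term in columns $j \equiv -r$ and its second term in columns $j \equiv r + 1 \pmod t$. The crucial observation is that rows of residue $i$ and rows of residue $t-1-i$ draw on exactly the same pair of column classes ($\equiv -i$ and $\equiv i+1$), with the roles of the two terms interchanged; this is precisely the combinatorial source of the pairing $i \leftrightarrow t-1-i$. Permuting the rows so that the beta-set entries are grouped by residue and sorted decreasingly within each class is the permutation $\sigma_\lambda$, contributing $\sgn(\sigma_\lambda)$, and a parallel reindexing $j = t j' + c$ of the columns sorts the column classes to match. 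After these moves the determinant breaks, via a Laplace/block expansion, into a product over the residue pairs.

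Then I would identify each block. For a pair $(i, t-1-i)$ with $i < (t-1)/2$, the coupled two-class block, after replacing each surviving $h$ by its $\uX^t$ value, is exactly the determinant defining the Koike $\GL$ universal character, so by \eqref{defrs} and \eqref{rs-s} it equals $\rs_{\lambda^{(i)}, \lambda^{(t-1-i)}}(\uX^t)$. When $t$ is odd the self-paired middle residue $(t-1)/2$ satisfies $-i \equiv i+1 \pmod t$, so its two column classes coincide and the block reproduces the type B determinant \eqref{def-univso} in the variables $\uX^t$, giving $\uoo_{\lambda^{((t-1)/2)}}(\uX^t)$; when $t$ is even no middle block occurs. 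The nonvanishing criterion falls out of this same bookkeeping: a nonzero product requires the row residues to pair up under $r \mapsto t-1-r$ with matching multiplicities, and translating this balance condition on the $n_i(\lambda)$ through the $t$-core/quotient correspondence (the partition division theorem) is exactly the statement that $\core{\lambda}t$ is self-conjugate.

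The hard part will be the sign $(-1)^\epsilon$. Here I would carefully accumulate the signs from the row sort (with $\sgn(\sigma_\lambda)$ separated out), from the column reindexing, from the reflection $j \mapsto 1-j$ within each paired block, and from assembling the blocks in the Laplace expansion. Matching these against the claimed exponent --- in particular the $-\sum_{i = \floor{(t+1)/2}}^{t-1} \binom{n_i(\lambda)+1}{2}$ term and the $nr$ correction present only for odd $t$ --- is the delicate step, and I expect it to require the same parity lemmas on binomial coefficients and on $\sgn(\sigma_\lambda)$ that underlie \cref{thm:sympfact} and \cref{thm:eorthfact}. As a consistency check I would compare against the even orthogonal case \eqref{th:5.2}, whose reflection $j \mapsto -j$ differs from the present $j \mapsto 1-j$ only by the unit shift responsible for replacing the orthogonal-core condition by the self-conjugate-core condition.
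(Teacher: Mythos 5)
The paper does not actually prove this statement---it is quoted from Albion \cite{albion-2023} as background---so there is no internal proof to compare against; your outline is essentially the argument of that source and of the analogous classical-character proofs in \cite{ayyer-kumari-2022}. The structural steps are all sound: $h_m$ at the twisted alphabet vanishes unless $t \mid m$, the Jacobi--Trudi determinant \eqref{def-univso} then becomes block-diagonal under the residue pairing $i \leftrightarrow t-1-i$ (since row class $i$ and row class $t-1-i$ are exactly the two classes meeting the column classes $-i$ and $i+1$ modulo $t$), the paired blocks are Koike $\rs$-determinants in $\uX^t$, the self-paired middle block for odd $t$ is a $\uoo$-determinant, and squareness of all blocks translates via the core/quotient correspondence into self-conjugacy of $\core{\lambda}{t}$. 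The only part you have not executed is the sign bookkeeping, which you correctly identify as the delicate step and which is where most of the labor of the published proof lies.
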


We note that specializing the variables to $(X,\X)$ in \cref{thm:sympfact}, \cref{thm:eorthfact} and \cref{thm:oorthfact} gives corresponding factorization results for classical characters~\cite{ayyer-kumari-2022}. 
The next two results are factorizations for characters of self-dual representations of $\GL(n)$ when half the variables are reciprocals of the other half.

\begin{thm}[{\cite[Equation (18)]{ayyer-behrend-2019}}]
\label{thm:facx}
    For any partition $\lambda=(\lambda_1,\dots,\lambda_n)$,
    \[
    s_{(\pm \lambda)_{2n}}(X,\X)=(-1)^{ 
    |\lambda|} 
    \oo_{\lambda}(X) \,
    \oo_{\lambda}(-X).
    \]
\end{thm}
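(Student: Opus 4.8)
The plan is to prove the identity by passing to bialternants (Weyl character formulas) and factoring a $2n\times 2n$ determinant into two $n\times n$ determinants, one of which is $\oo_\lambda(X)$ and the other $(-1)^{|\lambda|}\oo_\lambda(-X)$. First I would record two normalizations that cost nothing. Since $\mu:=(\pm\lambda)_{2n}$ is obtained from $(\lambda,0,\dots,0,-\rev(\lambda))$ by adding $\lambda_1$ to every part, and since the product of the $2n$ variables $x_1,\dots,x_n,\x_1,\dots,\x_n$ equals $1$, the shift by $\lambda_1$ multiplies the bialternant by $1$ and can be dropped. Writing $s_{(\pm\lambda)_{2n}}(X,\X)$ as a ratio of $2n\times 2n$ determinants $\det(y_c^{e})/\det(y_c^{e'})$ with columns $y_c\in\{x_1,\dots,x_n,\x_1,\dots,\x_n\}$, the numerator exponents form $\{\lambda_i+2n-i\}_{i=1}^n\cup\{i-1-\lambda_i\}_{i=1}^n$, which come in pairs summing to $2n-1$. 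Multiplying each column by $y_c^{-(2n-1)/2}$ (again harmless, since $\prod_c y_c=1$) centers the exponents to the symmetric set $\{\pm f_i\}$ with $f_i=\lambda_i+n-i+\tfrac12$; the same centering applied to the denominator yields $\{\pm g_i\}$ with $g_i=n-i+\tfrac12$.

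The key step is the block factorization. After reversing the order of the lower half of the rows --- a permutation whose sign is identical for numerator and denominator and therefore cancels in the ratio --- the centered numerator becomes $\det\begin{pmatrix} A & B\\ B & A\end{pmatrix}$ with $A_{ik}=x_k^{f_i}$ and $B_{ik}=x_k^{-f_i}$, and the standard identity $\det\begin{pmatrix} A & B\\ B & A\end{pmatrix}=\det(A+B)\det(A-B)$ splits it as $\det(x_k^{f_i}-x_k^{-f_i})\det(x_k^{f_i}+x_k^{-f_i})$. The same manipulation applies to the denominator with $f_i$ replaced by $g_i$. The ``minus'' ratio $\det(x_k^{f_i}-x_k^{-f_i})/\det(x_k^{g_i}-x_k^{-g_i})$ is exactly the Weyl character formula for type $B_n$, i.e.\ $\oo_\lambda(X)$, which is the bialternant form of \eqref{def:odd-jcob}.

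It remains to identify the ``plus'' ratio, and here I would substitute $x_k\mapsto -x_k$ in the minus ratio. Using $(-x_k)^{\pm f_i}=(-1)^{\lambda_i+n-i}(\pm i)\,x_k^{\pm f_i}$ for the half-integer exponents, each row $i$ of $\det((-x_k)^{f_i}-(-x_k)^{-f_i})$ factors out the scalar $(-1)^{\lambda_i+n-i}\,i$ and leaves $x_k^{f_i}+x_k^{-f_i}$, so the minus-determinant becomes $i^n(-1)^{|\lambda|}(-1)^{n(n-1)/2}\det(x_k^{f_i}+x_k^{-f_i})$; the denominator (the case $\lambda=\emptyset$) produces the same prefactor without the $(-1)^{|\lambda|}$. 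Hence $\oo_\lambda(-X)=(-1)^{|\lambda|}\det(x_k^{f_i}+x_k^{-f_i})/\det(x_k^{g_i}+x_k^{-g_i})$, so the plus ratio equals $(-1)^{|\lambda|}\oo_\lambda(-X)$. Multiplying the two ratios gives $s_{(\pm\lambda)_{2n}}(X,\X)=(-1)^{|\lambda|}\oo_\lambda(X)\,\oo_\lambda(-X)$, as claimed.

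The main obstacle I anticipate is the bookkeeping with the half-integer exponents: the substitution $x_k\mapsto -x_k$ must be justified carefully. The characters are genuine Laurent polynomials, so $\oo_\lambda(-X)$ is unambiguous, but the bialternant representation involves $x^{1/2}$, and the branch choice $(-1)^{f_i}=(-1)^{\lambda_i+n-i}\,i$ must be applied uniformly so that the spurious factors of $i^n$ and of $(-1)^{n(n-1)/2}$ cancel between the plus-determinant and its $\lambda=\emptyset$ denominator, leaving precisely $(-1)^{|\lambda|}$. A convenient simplification worth stressing is that every row and column reordering sign arising along the way is common to numerator and denominator, and hence never needs to be computed.
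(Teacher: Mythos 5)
Your argument is correct, and since the paper imports this identity from Ayyer--Behrend without proof, there is no internal proof to compare against; what you give is essentially the standard (and the cited source's) argument, namely folding the $2n\times 2n$ alternant into $\det\left(\begin{smallmatrix}A & B\\ B & A\end{smallmatrix}\right)=\det(A+B)\det(A-B)$ and recognizing the two factors as type-$B$ Weyl bialternants. The only delicate points are the ones you already flag: the half-integer exponents should be handled by working with formal square roots $w_k=x_k^{1/2}$ (so that $x_k\mapsto -x_k$ becomes $w_k\mapsto \iota w_k$ and the prefactors $\iota^n(-1)^{n(n-1)/2}$ visibly cancel between the $\lambda$-alternant and the empty-partition denominator), and the centering by $y_c^{-(2n-1)/2}$ contributes only an overall $\pm1$ that is common to numerator and denominator; one also uses the classical equivalence of the bialternant with the determinant formula \eqref{def:odd-jcob}, which is standard.
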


The following result is a reformulation of~\cite[Equation (12)]{ayyer-behrend-2019} when $k=\lambda_1$ and $\lambda_n=0$. 

\begin{thm}
\label{thm:facx1}
For any partition $\lambda=(\lambda_1,\dots,\lambda_n)$,
\[
s_{(\pm \lambda)_{2n+1}}(X,\X,1)=\sp_{\lambda}(X) \, \oe_{\lambda}(X,1).
\]
\end{thm}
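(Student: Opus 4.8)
The plan is to prove the identity directly from the Weyl (bialternant) character formulas, exploiting the self-duality of the shape $(\pm\lambda)_{2n+1}$. Write $Z=(X,\X,1)=(x_1,\dots,x_n,\x_1,\dots,\x_n,1)$, a list of $2n+1$ variables, and express the left-hand side as a ratio of alternants,
\[
s_{(\pm\lambda)_{2n+1}}(Z)=\frac{\det(z_i^{e_j})_{1\le i,j\le 2n+1}}{\det(z_i^{\,2n+1-j})_{1\le i,j\le 2n+1}},\qquad e_j=(\pm\lambda)_{2n+1,j}+2n+1-j.
\]
A short computation from the definition of $(\pm\lambda)_{2n+1}$ shows that the multiset $\{e_j\}$ is symmetric about its central value $c=\lambda_1+n$; precisely, $\{e_j-c\}=\{\pm d_k:1\le k\le n\}\cup\{0\}$ with $d_k=\lambda_k+n+1-k$. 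Since the product of all entries of $Z$ equals $1$, I may factor $z_i^{\,c}$ out of the $i$th row of the numerator at no cost, replacing the exponents $e_j$ by the centred exponents $f_j\in\{d_1,\dots,d_n,0,-d_n,\dots,-d_1\}$; the same reduction applies to the denominator.

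Next I would block-diagonalize. For each $i$, replace the two rows indexed by $x_i$ and $\x_i$ by their sum and difference, and for each $k$ replace the two columns indexed by the exponents $d_k$ and $-d_k$ by their sum and difference, leaving untouched the column $f_j=0$ and the row indexed by the variable $1$. Because $x_i^{\,d}\pm x_i^{\,-d}$ is even/odd under $d\mapsto -d$ while the row for $1$ is constant, a reordering of rows and columns separates the matrix into two diagonal blocks: an antisymmetric block $B$ of size $n$ with entries $x_i^{\,d_k}-x_i^{\,-d_k}$ ($1\le i,k\le n$), and a symmetric block $A$ of size $n+1$ with entries $x_i^{\,d_k}+x_i^{\,-d_k}$ together with the extra row and column coming from the variable $1$ and the central exponent $0$.

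Now I recognize the two blocks. Since $d_k=\lambda_k+n-k+1$, the block $B$ is exactly the symplectic numerator alternant for $\Sp(2n)$ at $\lambda$, while the block $A$ — having $n+1$ effective variables $(x_1,\dots,x_n,1)$ and exponents $\{d_1,\dots,d_n,0\}=\{\lambda_j+(n+1)-j\}$ (reading $\lambda$ as a partition of length $n+1$ with last part $0$) — is the even orthogonal numerator alternant for $\OE(2(n+1))$ at $\lambda$ evaluated at $(X,1)$. Performing the identical manipulations on the denominator, i.e.\ the $\lambda=\emptyset$ specialization, factors $\det(z_i^{\,2n+1-j})$ into the corresponding symplectic and even-orthogonal Weyl denominators $\det B_0$ and $\det A_0$. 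Taking the ratio, the reordering sign and the powers of $2$ produced by the sum/difference operations are independent of $\lambda$ and cancel between numerator and denominator, leaving $\det B/\det B_0=\sp_\lambda(X)$ times $\det A/\det A_0=\oe_\lambda(X,1)$, which is the claim.

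The main obstacle is bookkeeping rather than conceptual: I must check that the reordering signs and powers of $2$ really are $\lambda$-independent and cancel cleanly, so that the identity carries the coefficient $+1$, and I must match the two bialternants to the paper's Jacobi–Trudi definitions \eqref{def:sp-jcob} and \eqref{def:even-Jcob} of $\sp$ and $\oe$, taking care with the type-D normalization of the last (constant) column. As a consistency check, the case $n=1$, $\lambda=(k)$ reduces both sides to $\tfrac{x_1^{k+1}-x_1^{-(k+1)}}{x_1-x_1^{-1}}\cdot\tfrac{x_1^{k+1}+x_1^{-(k+1)}-2}{x_1+x_1^{-1}-2}$, confirming the normalization. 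A less hands-on alternative would apply Littlewood's expansion \eqref{littexp} to $s_{(\pm\lambda)_{2n+1}}(X,\X,1)$ and resum, or deduce the statement from \cref{thm:facx} by adjoining the variable $1$, but the bialternant factorization seems the most transparent.
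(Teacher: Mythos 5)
Your proposal is correct, but note that the paper does not actually prove this statement: it simply observes that it is a reformulation of Equation (12) of Ayyer--Behrend and cites that source. What you have written is, in effect, the underlying argument of the cited reference: splitting the $\GL_{2n+1}$ bialternant for a self-dual shape into its symmetric and antisymmetric parts under $z\mapsto \bar z$. Your key computations check out: the centred exponents are indeed $\{\pm(\lambda_k+n+1-k)\}\cup\{0\}$, the factor $\prod_i z_i^c=1$ because the variables $(X,\X,1)$ multiply to $1$, and the sum/difference operations on the $n$ row pairs and $n$ column pairs produce a genuine $n\times(n)$ antisymmetric block and an $(n+1)\times(n+1)$ symmetric block, since the mixed entries and the $(1,\,C_0)$-adjacent mixed entries all vanish. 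The one point that genuinely needs care, and which you correctly flag, is the type-D normalization: the symmetric block has last row and column (coming from the variable $1$ and the exponent $0$) whose entries differ from the standard $\OE(2n+2)$ numerator alternant by diagonal scalings, and the usual factor-of-$2$ ambiguity for even orthogonal characters is resolved here because $\lambda$, read as a partition for $\OE(2n+2)$, always has vanishing last part; both discrepancies are $\lambda$-independent and cancel in the ratio with the $\lambda=\emptyset$ case, as your $n=1$ check confirms. So the proof is sound; it supplies the argument the paper outsources, rather than taking a different route from it.
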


\section{Factorization of classical characters twisted by odd powers of an even primitive root of unity}
\label{sec:fact}

We generalize the results from our previous work~\cite{ayyer-kumari-2022} in this section.
Suppose $t$ is a multiple of $4$ and $\omega$ is a primitive $t'$th root of unity as usual. 
Define  
a tuple $X_{\omega}$ of length $tn/2 + t/4$ by 
\[
X_{\omega} = (\omega X,\omega^3 X,\dots,\omega^{{t}-1}X,\omega,\omega^{3},\dots,\omega^{t/2-1}){,}
\]
and let $\X_{\omega}=\overline{(X_{\omega})}$ so that
\[
\X_{\omega} = (\omega \X,\omega^3 \X,\dots,\omega^{{t}-1}\X,\omega^{t/2+1},\omega^{t/2+3},\dots,\omega^{t-1}).
\]
It then follows that
    \begin{align*}
 (X_{\omega},\X_{\omega}) 
= (Z,\omega^2 Z,\dots, \omega^{t-2} Z),
    \end{align*}
 where  $Z=(\omega X, \omega \X,\omega)$.
We will evaluate classical characters specialized at $X_\omega$.
For $n = 1, X=(x)$ and  $t=4$, the specialization considered here is $X_\omega = (\iota x, -\iota x, \iota)$, where $\iota^2 = -1$.
The motivation for considering this strange specialization comes from {a suggestion of D. Prasad, who was looking for natural generalizations of \cref{thm:schur-k}. }

\subsection{Schur factorization}

\begin{thm}
\label{thm:schur-k-s}
Let $\lambda \in \P_{{tn}/{2}+{t}/{4}}$ be a partition 
indexing an irreducible representation of $\GL({tn}/{2}+{t}/{4})$,
with $\core \lambda {{t}/{2}}= \nu \coloneqq (\nu_1, \nu_2, \dots)$ 
and $\quo \lambda t = (\lambda^{(0)},\dots,\lambda^{(t-1)})$. 
Then the $\GL({tn}/{2}+{t}/{4})$-character $s_{\lambda}(X_{\omega})$ is
nonzero if and only if $\ell(\nu) \leq t/4$ and $\nu_1\leq t/4$. 
In that case, it is given by
    \begin{equation}
    \label{schur-k-s}
       s_{\lambda}(X_{\omega})
   = \sgn(\sigma_{\lambda}^{\beta(\nu)})
       \sgn(\sigma_{\emptyset}^{\beta(\nu)}) \, s_{\nu}(1,\omega^{2},\dots,\omega^{t/2-2})
       \prod_{i=1}^{{t}/{4}} 
       s_{\lambda^{(\beta_i(\nu))}}
       (X^{{t}/{2}},1) 
       \prod_{\substack{j=0\\ j \not \in \beta(\nu)}}
       ^{(t-2)/{2}}  
       s_{\lambda^{(j)}} (X^{{t}/{2}}).
    \end{equation}
\end{thm}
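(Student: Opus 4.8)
The plan is to deduce this from \cref{thm:schur-k} by recognizing that, up to an overall scalar, $X_\omega$ is exactly the specialization appearing there, but with modulus $t/2$ in place of $t$. Since $t$ is a multiple of $4$, the element $\omega^2$ is a primitive $(t/2)$-th root of unity, and $m := t/4$ satisfies $0 \le m \le t/2 - 1$. Pulling a common factor of $\omega$ out of every entry gives
\[
X_\omega = \omega \cdot \left( X, \omega^2 X, \dots, \omega^{t-2} X, 1, \omega^2, \dots, \omega^{t/2 - 2} \right),
\]
and the tuple in parentheses has length $(t/2)n + t/4$. It is precisely the argument of \cref{thm:schur-k} under the substitution $t \mapsto t/2$, $\omega \mapsto \omega^2$, with the extra variable set to $1$ and the number of extra variables equal to $m = t/4$.

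First I would invoke \cref{thm:schur-k} with these substitutions applied to $s_\lambda(X_\omega/\omega)$. The nonvanishing criterion there --- that $\core{\lambda}{t/2}$ equals $\nu = (\nu_1, \dots, \nu_m)$ with $\nu_1 \le (t/2) - m$ --- becomes exactly $\ell(\nu) \le t/4$ and $\nu_1 \le t/4$, since $(t/2) - m = t/4$; this matches the stated condition. The factorization then reads off directly: the core contributes $s_\nu(1, \omega^2, \dots, \omega^{t/2-2})$ (the $t/4$ powers of $\omega^2$), the $m$ distinguished components of $\quo{\lambda}{t/2}$ contribute $\prod_{i=1}^{t/4} s_{\lambda^{(\beta_i(\nu))}}(X^{t/2}, 1)$ (with $y^{t/2} = 1$), the remaining components contribute $\prod_{j \notin \beta(\nu)} s_{\lambda^{(j)}}(X^{t/2})$, and the permutation signs $\sgn(\sigma_\lambda^{\beta(\nu)}) \sgn(\sigma_\emptyset^{\beta(\nu)})$ appear unchanged. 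Here all cores, quotients and beta-sets are taken with respect to $t/2$.

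The one genuinely delicate point --- and the step I expect to be the main obstacle --- is the overall scalar produced by pulling out $\omega$. By homogeneity of Schur polynomials, $s_\lambda(X_\omega) = \omega^{|\lambda|} \, s_\lambda(X_\omega/\omega)$, and one must account for this factor $\omega^{|\lambda|}$ and show it combines with the product above into the stated prefactor. The natural tool is the size identity $|\lambda| = |\nu| + (t/2)\sum_j |\lambda^{(j)}|$ together with $\omega^{t/2} = -1$: equivalently, applying \cref{thm:schur-k} directly with base variables $\omega X$ and single extra variable $\omega$, the relations $(\omega X)^{t/2} = -X^{t/2}$, $\omega^{t/2} = -1$ and $s_\mu(-Y) = (-1)^{|\mu|} s_\mu(Y)$ introduce a sign $(-1)^{|\lambda^{(j)}|}$ on each quotient factor and a factor $\omega^{|\nu|}$ on the core, and these recombine by the same identity. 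Carefully matching this scalar against the stated constant, and confirming that the two conventions for $\sigma_\lambda^{\beta(\nu)}$ (modulus $t/2$ versus $t$) agree, is the only real work; the structural factorization is immediate from \cref{thm:schur-k}.
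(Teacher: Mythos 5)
Your proposal matches the paper's proof: the theorem is obtained there as a direct corollary of \cref{thm:schur-k} with $t$, $\omega$, $X$ replaced by $t/2$, $\omega^2$, $\omega X$ and with $m=t/4$, $y=\omega$ --- exactly the ``equivalent'' second route you describe --- after which the paper only adds that the factor $s_{\nu}(1,\omega^{2},\dots,\omega^{t/2-2})$ is nonzero via its Weyl-type product formula, so the nonvanishing criterion transfers. The scalar bookkeeping you single out as the main obstacle is not carried out any more explicitly in the paper, so your variant (pulling out $\omega$ and invoking homogeneity together with $|\lambda|=|\nu|+(t/2)\sum_j|\lambda^{(j)}|$ and $\omega^{t/2}=-1$) is a cosmetic reorganization of the same argument rather than a different approach.
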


\begin{proof}
This is a direct corollary of \cref{thm:schur-k}, where we use $t/2$, $\omega^2$ and $\omega X$ in place of $t$, $\omega$ and $X$ respectively, and substitute $m=t/4$ and $y=\omega$. Notice that in the non-zero case, one of the factors is 
the Schur polynomial 
\begin{equation}
\label{svan}
s_{\nu}(1,\omega^{2},\dots,\omega^{t/2-2}) = 
    \prod_{1 \leq i<j \leq \frac{t}{4}} \left(\frac{\omega^{2\nu_j+\frac{t}{2}-2j}
-\omega^{2\nu_i+\frac{t}{2}-2i}}
{\omega^{t/2-2j}
-\omega^{t/2-2i}}\right). 
\end{equation}
Therefore, \eqref{schur-k-s} is always non-zero, completing the proof. 
\end{proof}

\subsection{Symplectic character factorization}

\begin{lem}
\label{lem:relations}
We have $\uoo_\lambda(X,\X,-1) =\sp_{\lambda}(X)$ for $\lambda \in \P_{n}$, and 
${\usp}_{\lambda}(X,\X,-1) =\oo_{\lambda}(X,-1)$ for $\lambda \in \P_{n+1}$.
\end{lem}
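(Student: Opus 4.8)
The plan is to prove both identities by reducing the relevant Jacobi--Trudi--type determinants to one another through elementary column operations, with \cref{prop:compid} and the generating function \eqref{genfn-h} supplying the needed one-term relations among complete homogeneous symmetric polynomials. Throughout I would write $Y = (X, \X)$ and $a_i = \lambda_i - i$, and observe that evaluating a universal character at $(X, \X, -1)$ amounts to the substitution $h_m(\uX) \mapsto h_m(Y, -1)$ in its defining determinant. The factor $\tfrac12$ appearing in the symplectic definitions \eqref{def:sp-jcob} and \eqref{def-univsp} is the quantity I expect to have to track most carefully, since the column operations will produce a compensating factor of $2$.

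For the first identity, set $g_m = h_m(Y, -1)$. \cref{prop:compid} gives $h_m(Y) = g_m + g_{m-1}$, so the $(i,j)$ entry of the symplectic determinant \eqref{def:sp-jcob} at $X$ becomes
\[
h_{a_i+j}(Y) + h_{a_i - j + 2}(Y) = g_{a_i+j} + g_{a_i+j-1} + g_{a_i-j+2} + g_{a_i-j+1}.
\]
Comparing with the $(i,j)$ entry $B_{ij} = g_{a_i+j} + g_{a_i-j+1}$ of the type B universal determinant \eqref{def-univso} evaluated at $(X,\X,-1)$, I would check that this equals $B_{ij} + B_{i,j-1}$ for $j \ge 2$ and $2 B_{i1}$ for $j = 1$. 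Hence the symplectic matrix factors as $B\,M$, where $M$ is upper triangular with diagonal $(2,1,\dots,1)$ and superdiagonal ones, so $\det M = 2$; the prefactor $\tfrac12$ in \eqref{def:sp-jcob} cancels it exactly, yielding $\sp_\lambda(X) = \uoo_\lambda(X,\X,-1)$ for $\lambda \in \P_n$ (an $n \times n$ determinant on both sides).

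For the second identity, I would first establish, via \eqref{genfn-h}, the relation $f_m - f_{m-2} = g_m$, where $f_m = h_m(Y,-1,-1,1)$; indeed $\sum_m f_m q^m = (1-q^2)^{-1}\sum_m g_m q^m$ because $(1+q)^{-2}(1-q)^{-1} = (1-q^2)^{-1}(1+q)^{-1}$. Since $\oo_\lambda(X,-1)$ is the type B character in the $n+1$ variables $(X,-1)$, the full variable set in \eqref{def:odd-jcob} is $(Y,-1,-1,1)$, so its entries are $D_{ij} = f_{a_i+j} - f_{a_i-j}$, while $\usp_\lambda(X,\X,-1)$ has entries $E_{ij} = g_{a_i+j} + g_{a_i-j+2}$. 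Substituting $g_m = f_m - f_{m-2}$ and using that $k \mapsto f_{a_i+k}-f_{a_i-k}$ is odd (so $D_{i,-1} = -D_{i1}$ and $D_{i0}=0$), I would verify $E_{ij} = D_{ij} - D_{i,j-2}$ for $j \ge 3$, together with $E_{i2} = D_{i2}$ and $E_{i1} = 2D_{i1}$. This exhibits $E = D\,M'$ with $M'$ upper triangular of determinant $2$, so the $\tfrac12$ cancels and $\usp_\lambda(X,\X,-1) = \oo_\lambda(X,-1)$ for $\lambda \in \P_{n+1}$ ($(n+1)\times(n+1)$ determinants).

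The only genuinely delicate point is the bookkeeping at the boundary columns $j=1$ (and $j=2$ in the second case): it is precisely the doubling $E_{\cdot 1} = 2 D_{\cdot 1}$, arising from the antisymmetry $D_{i,-1} = -D_{i1}$, that produces the determinant $2$ matching the $\tfrac12$ in the symplectic definitions. I expect confirming these edge relations, and checking that the determinant sizes are consistent with the stated memberships $\lambda \in \P_n$ and $\lambda \in \P_{n+1}$, to be the main thing requiring care; the interior relations are a routine telescoping once the correct shift identities for the $h_m$ are in hand.
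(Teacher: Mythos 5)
Your proof is correct, and your treatment of the first identity is essentially the paper's: the column operations $C_j \to C_j + C_{j-1}$ in the paper are exactly your right-multiplication by the upper-triangular matrix $M$ with diagonal $(2,1,\dots,1)$, and the determinant $2$ cancelling the $\tfrac12$ is the same bookkeeping. For the second identity you take a mildly different route: the paper simply rewrites $\oo_\lambda(X,-1)$ as $\uoo_\lambda(X,\X,-1,-1)$ (using $\oo_\mu(Z)=\uoo_\mu(Z,\overline{Z},0,0,\dots)$ at $Z=(X,-1)$) and then repeats the first computation verbatim, with one further application of \cref{prop:compid}; you instead compare $\usp_\lambda(X,\X,-1)$ directly against the classical Jacobi--Trudi determinant \eqref{def:odd-jcob} in the variables $(X,-1)$, which forces you to derive the two-step telescoping $f_m - f_{m-2} = g_m$ from \eqref{genfn-h} and to exploit the antisymmetry $D_{i,-k}=-D_{ik}$ at the boundary columns. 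Both are sound; the paper's reduction is shorter because it reuses the first identity's computation wholesale, while yours is more self-contained in that it never invokes the universal-to-classical specialization of $\uoo$ and makes the source of the compensating factor $2$ (the antisymmetry at $j=1$) completely explicit. All your edge cases ($E_{i1}=2D_{i1}$, $E_{i2}=D_{i2}$, and the matrix sizes $n$ and $n+1$) check out.
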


\begin{proof} 
From \eqref{def-univso},
\[
\uoo_\lambda(X,\X,-1) 
=\det \left(
     h_{\lambda_i-i+j}(X,\X,-1)+ h_{\lambda_i-i-j+1}(X,\X,-1)  
 \right)_{1 \leq i,j \leq n}.
\]
We can now apply \cref{prop:compid} to simplify each entry.
We then apply elementary column operations $C_{j} \rightarrow C_j+C_{j-1}$ for $j=n,\dots,2$ in that order  
to obtain
\begin{align*}
\uoo_\lambda(X,\X,-1) 
&=\det \left(
\begin{array}{c|c}
     h_{\lambda_i-i+1}(X,\X) & \left(  h_{\lambda_i-i+j}(X,\X)+ h_{\lambda_i-i-j+2}(X,\X)  \right)_{2 \leq j \leq n}
\end{array}
 \right)_{1 \leq i \leq n}\\
 &=\sp_{\lambda}(X).
\end{align*}
This proves the first equality. For the second equality, consider ${\oo}_{\lambda}(X,-1) = \uoo_\lambda(X,\X,-1,\allowbreak -1)$ and repeat the above computation.
\end{proof}

We consider the symplectic character with the same specialization below. We first recall the following result. 

\begin{lem}[{\cite[Corollary 3.7]{ayyer-kumari-2022}}]
\label{len-res-smp}
    Let $\lambda$ be a partition of length 
at most $tn$. Then $\core \lambda {t}$ is a symplectic $t$-core if and only if
$n_{i}(\lambda)+n_{t-2-i}(\lambda)=2n$ for $0 \leq i \leq \floor{{(t-2)}/{2}}$ and $n_{t-1}(\lambda)=n$.
\end{lem}

\begin{thm} 
\label{thm:symp}
Let $\lambda$ be a partition in $\P_{{tn}/{2}+{t}/{4}}$.
The $\Sp(tn+{t}/{2})$-character $\sp_{\lambda}(X_{\omega})$ is 
nonzero if and only if $\core \lambda {t/2}$ is symplectic.
In that case, it is given by 
    \begin{equation}
    \label{sym-res}
   \begin{split}
  \sp_{\lambda}(X_{\omega})  =
  (-1)^{\epsilon} & \sgn({\sigma}_{\lambda}) \oo_{\lambda^{(t/2-1)}}(-X^{t/2},-1)  \\
  &\times 
  \sp_{{\lambda^{((t-4)/{4})}}}(-X^{t/2})
   \prod_{i=0}^{{(t-8)/{4}}} 
  s_{(\lambda^{(i)},-\lambda^{(t/2-2-i)})_{2n+1}}(X^{t/2},\X^{t/2}, 1),
  \end{split}
    \end{equation}
    where 
the $(t/2)$-quotient $(\lambda^{(0)},\dots,\lambda^{(t/2-1)})$ is evaluated by considering $\lambda \in \P_{tn+t/2}$ and 
\[
\epsilon= \ds -\sum_{i={{t}/{4}}}^{(t-4)/2} 
\binom{n_{i}(\lambda)+1}2 + 
{n+1}+r+\sum_{i=0}^{{(t-8)/{4}}} |(\lambda^{(i)},-\lambda^{(t/2-2-i)})_{2n+1}|.
\]
\end{thm}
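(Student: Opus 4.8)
The plan is to reduce the claim to the universal symplectic factorization of \cref{thm:sympfact} by exploiting the identity $(X_{\omega},\X_{\omega}) = (Z,\omega^2 Z,\dots,\omega^{t-2}Z)$ recorded at the start of the section, where $Z=(\omega X,\omega \X,\omega)$. Since $\sp_\lambda$ is given by the same Jacobi--Trudi determinant as $\usp_\lambda$ but in the variables $(X_{\omega},\X_{\omega})$, the relation \eqref{uni-class} yields
\[
\sp_\lambda(X_{\omega}) = \usp_\lambda(X_{\omega},\X_{\omega}) = \usp_\lambda\bigl(Z,\omega^2 Z,\dots,(\omega^2)^{t/2-1}Z\bigr),
\]
with all unused variables set to $0$. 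As $\omega^2$ is a primitive $(t/2)$'th root of unity and $Z$ has $2n+1$ entries, this is exactly the left-hand side of \cref{thm:sympfact} with $t/2$, $\omega^2$ and $Z$ in place of $t$, $\omega$ and $\uX$, and with the role of the theorem's ``$n$'' played by $n'=2n+1$. This forces the $(t/2)$-quotient and the permutation $\sigma_\lambda$ to be read off from $\beta(\lambda,tn+t/2)=\beta(\lambda,(t/2)(2n+1))$, matching the convention in the statement, and it makes the vanishing criterion ($\core{\lambda}{t/2}$ symplectic) immediate.

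First I would apply \cref{thm:sympfact} in this guise. Because $t\equiv 0\pmod 4$, the integer $t/2$ is even, so I land in the ``$t$ even'' branch: the factorization produces $\usp_{\lambda^{(t/2-1)}}(Z^{t/2})$, the product $\prod_{i=0}^{(t-8)/4}\rs_{\lambda^{(i)},\lambda^{(t/2-2-i)}}(Z^{t/2})$, and the middle factor $\uoo_{\lambda^{((t-4)/4)}}(Z^{t/2})$, together with the sign $(-1)^{\epsilon_1}\sgn(\sigma_\lambda)$, where $\epsilon_1=-\sum_{i=t/4}^{(t-4)/2}\binom{n_i(\lambda)+1}{2}+\frac{n'(n'+1)}{2}+n'r$ and $n'=2n+1$. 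The crucial simplification is that $\omega^{t/2}=-1$, whence $Z^{t/2}=((\omega X)^{t/2},(\omega\X)^{t/2},\omega^{t/2})=(-X^{t/2},-\X^{t/2},-1)$, a tuple of the form $(Y,\bar Y,-1)$ with $Y=-X^{t/2}$.

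Next I would convert each factor into a classical character. With $Z^{t/2}=(-X^{t/2},-\X^{t/2},-1)$, the two equalities of \cref{lem:relations} apply verbatim, giving $\usp_{\lambda^{(t/2-1)}}(Z^{t/2})=\oo_{\lambda^{(t/2-1)}}(-X^{t/2},-1)$ and $\uoo_{\lambda^{((t-4)/4)}}(Z^{t/2})=\sp_{\lambda^{((t-4)/4)}}(-X^{t/2})$; these are already the corresponding factors in \eqref{sym-res}, with no extra sign. For the remaining factors I would use \eqref{rs-s} to write $\rs_{\lambda^{(i)},\lambda^{(t/2-2-i)}}(Z^{t/2})=s_{(\lambda^{(i)},-\lambda^{(t/2-2-i)})_{2n+1}}(-X^{t/2},-\X^{t/2},-1)$, and then homogeneity of Schur polynomials, $s_\mu(-Y)=(-1)^{|\mu|}s_\mu(Y)$, to replace the argument by $(X^{t/2},\X^{t/2},1)$ at the cost of the sign $(-1)^{|(\lambda^{(i)},-\lambda^{(t/2-2-i)})_{2n+1}|}$ for each $i$.

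The remaining work is bookkeeping, and this is where the main obstacle lies. The applications of \cref{lem:relations} and \eqref{rs-s} require the length bounds $\lambda^{(t/2-1)}\in\P_{n+1}$ and $\ell(\lambda^{(i)})+\ell(\lambda^{(t/2-2-i)})\le 2n+1$; I would justify these from the hypothesis $\ell(\lambda)\le tn/2+t/4=(t/2)(2n+1)/2$ (a ``half-filling'' of the beta-set of size $(t/2)(2n+1)$) together with the symplectic structure of $\core{\lambda}{t/2}$, exactly as in the analogous arguments establishing \cref{thm:evenfac-1}. Finally I must check that the accumulated sign is $(-1)^{\epsilon}$: the homogeneity corrections contribute precisely $\sum_{i=0}^{(t-8)/4}|(\lambda^{(i)},-\lambda^{(t/2-2-i)})_{2n+1}|$, while $\frac{n'(n'+1)}{2}+n'r\equiv (n+1)+r\pmod 2$ because $n'=2n+1$ is odd; adding these to $-\sum_{i=t/4}^{(t-4)/2}\binom{n_i(\lambda)+1}{2}$ reproduces $\epsilon$ term by term. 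This sign reconciliation and the verification of the quotient-length inequalities in the symplectic-core regime are the only delicate points; everything else is a direct substitution into \cref{thm:sympfact}.
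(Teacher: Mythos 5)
Your proposal is correct and follows essentially the same route as the paper: rewrite $\sp_\lambda(X_\omega)=\usp_\lambda(Z,\omega^2 Z,\dots,\omega^{t-2}Z)$ with $Z=(\omega X,\omega\X,\omega)$, apply \cref{thm:sympfact} with $t/2$, $\omega^2$ and $2n+1$ variables, and then convert each universal factor via \cref{lem:relations}, \eqref{rs-s} and homogeneity of Schur polynomials, with the same parity reduction $(2n+1)(n+1)+(2n+1)r\equiv n+1+r\pmod 2$. Your explicit tracking of the homogeneity signs $(-1)^{|(\lambda^{(i)},-\lambda^{(t/2-2-i)})_{2n+1}|}$ is in fact slightly more careful than the paper's terse closing step.
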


{Notice that neither the odd orthogonal nor the symplectic factor in \eqref{sym-res} is a true character.}

\begin{proof}  
Since
$(X_{\omega},\X_{\omega}) = (Z,\omega^2 Z,\dots, \omega^{t-2} Z)$, we have from \eqref{uni-class},
    \[
    \sp_{\lambda}(X_{\omega})
  = {\usp}_{\lambda}(Z,\omega^2 Z,\dots, \omega^{t-2} Z).
    \]
If $\core \lambda {t/2}$ is not a symplectic $(t/2)$-core, then by \cref{thm:sympfact},
we see that 
   \[
   \sp_{\lambda}(X_{\omega})=0.
   \]
 If $\core \lambda {t/2}$ is a symplectic $(t/2)$-core, then 
again by \cref{thm:sympfact},
 we have 
\[
 \sp_{\lambda}(X_{\omega}) = (-1)^{\epsilon} \sgn({\sigma}_{\lambda}) 
 \usp_{\lambda^{(t/2-1)}}(Z^{t/2})
\uoo_{\lambda^{({t}/{4}-1)}}(Z^{t/2})  \times 
\prod_{i=0}^{{(t-8)/{4}}} 
\rs_{\lambda^{(i)},\lambda^{(t/2-2-i)}}(Z^{t/2}),
\]
where the $(t/2)$-quotient $(\lambda^{(0)},\dots,\lambda^{(t/2-1)})$ is evaluated by considering $\lambda \in \P_{tn+t/2}$ 
and 
\[
\epsilon= \ds -\sum_{i={{t}/{4}}}^{(t-4)/2} 
\binom{n_{i}(\lambda)+1}2 + 
{(2n+1)(n+1)}+(2n+1)r. 
\]
Since $\ell(\lambda) \leq \left(2n+1\right)t/4$ and $\core \lambda {t/2}$ is a symplectic $(t/2)$-core, $\ell(\lambda^{(i)})+\ell(\lambda^{(t/2-2-i)}) \leq 2n+1$ {by \cref{len-res-smp}}. Now using
\eqref{rs-s},
\[
 \rs_{\lambda^{(i)},\lambda^{(t/2-2-i)}}(Z^{t/2})
 =s_{(\lambda^{(i)},-\lambda^{(t/2-2-i)})_{2n+1}}(-X^{t/2},-\X^{t/2},-1),
\]
for $0 \leq i \leq {{(t-8)}/{4}}$. So, 
we have,
\begin{equation*}
    \begin{split}
  \sp_{\lambda}(X_{\omega})  =& (-1)^{\epsilon} \sgn({\sigma}_{\lambda})
  \usp_{\lambda^{(t/2-1)}}(-X^{t/2},-\X^{t/2},-1) 
  \uoo_{\lambda^{({t}/{4}-1)}}(-X^{t/2},-\X^{t/2},-1)\\  
  & \times 
 \prod_{i=0}^{{(t-8)/{4}}} s_{(\lambda^{(i)},-\lambda^{(t/2-2-i)})_{2n+1}}(-X^{t/2},-\X^{t/2},-1).
\end{split}
    \end{equation*}
Now we get the required result using~\cref{lem:relations}.     
      \end{proof}

\subsection{Even orthogonal character factorization}

\begin{lem}
\label{lem:relations-1}
We have $\uoe_\lambda(-X,-\X,-1) =(-1)^{|\lambda|} \oo_{\lambda}(X)$ for 
$\lambda \in \P_n$, and $\uoo^{-}_\lambda(-X,-\X,-1) \allowbreak = \oe_{\lambda}(-X,-1)$ for $\lambda \in \P_{n+1}$.
\end{lem}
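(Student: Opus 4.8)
The plan is to prove each of the two identities by collapsing the universal-character determinant onto the classical-character determinant using only the homogeneity $h_m(-X)=(-1)^m h_m(X)$ and the recursion in \cref{prop:compid}, exactly in the spirit of the proof of \cref{lem:relations}. Throughout I would keep an eye on the matrix sizes: $\lambda\in\P_n$ produces $n\times n$ determinants in the first identity, while $\lambda\in\P_{n+1}$ produces $(n+1)\times(n+1)$ determinants in the second (note that $\oe_\lambda(-X,-1)$ has an $(n+1)$-tuple of variables).

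For the first identity, I would start from the Jacobi--Trudi determinant \eqref{def-univo} for $\uoe_\lambda(-X,-\X,-1)$. Since $(-X,-\X,-1)=-(X,\X,1)$, the homogeneity relation pulls a sign out of each complete symmetric polynomial, so the $(i,j)$ entry becomes $(-1)^{\lambda_i-i+j}\bigl(h_{\lambda_i-i+j}(X,\X,1)-h_{\lambda_i-i-j}(X,\X,1)\bigr)$, where I use that $(-1)^{\lambda_i-i-j}=(-1)^{\lambda_i-i+j}$ because the exponents differ by $2j$. The surviving determinant is precisely $\oo_\lambda(X)$ by \eqref{def:odd-jcob}. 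The only bookkeeping is the prefactor: writing $(-1)^{\lambda_i-i+j}=(-1)^{\lambda_i-i}(-1)^{j}$ as a row factor times a column factor and extracting both from the determinant contributes $\prod_{i}(-1)^{\lambda_i-i}\prod_{j}(-1)^{j}=(-1)^{|\lambda|-n(n+1)/2}(-1)^{n(n+1)/2}=(-1)^{|\lambda|}$, which is exactly the claimed sign.

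For the second identity I would go the other way, starting from $\oe_\lambda(-X,-1)$ and arriving at $\uoo^{-}_\lambda(-X,-\X,-1)$. Writing $Y=(-X,-\X,-1)$, the argument of the even orthogonal character in \eqref{def:even-Jcob} produces entries in $h_m(Y,-1)=h_m(-X,-\X,-1,-1)$. I would then apply the column operations $C_j\to C_j+C_{j-1}$ for $j=n+1,\dots,2$ (in that decreasing order, so each step uses the original $C_{j-1}$). In each modified column two applications of \cref{prop:compid} collapse adjacent pairs, via $h_{\lambda_i-i+j}(Y,-1)+h_{\lambda_i-i+j-1}(Y,-1)=h_{\lambda_i-i+j}(Y)$ and the analogous identity for the negative index, turning the $(i,j)$ entry into $h_{\lambda_i-i+j}(Y)-h_{\lambda_i-i-j+1}(Y)$, which is exactly the entry of the variant $\uoo^{-}_\lambda$.

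The step I expect to need the most care is the boundary column $j=1$, which the operations leave untouched. There I would verify directly, again with \cref{prop:compid}, that the unmodified entry $h_{\lambda_i-i+1}(Y,-1)-h_{\lambda_i-i-1}(Y,-1)$ already coincides with the required $\uoo^{-}$ entry $h_{\lambda_i-i+1}(Y)-h_{\lambda_i-i}(Y)$: expanding both $h(Y)$ terms on the right via the recursion makes the two middle terms $h_{\lambda_i-i}(Y,-1)$ cancel, leaving precisely the left-hand side. Once column $1$ is confirmed, the determinant is unchanged by the column operations, so $\oe_\lambda(-X,-1)=\uoo^{-}_\lambda(-X,-\X,-1)$, completing the proof.
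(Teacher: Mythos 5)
Your proof is correct and follows essentially the same route as the paper: the first identity is the same sign-extraction via $h_m(-X)=(-1)^mh_m(X)$, and the second uses the same combination of \cref{prop:compid} with the column operations $C_j\to C_j+C_{j-1}$, only run in the opposite direction (from $\oe_\lambda(-X,-1)$ to $\uoo^-_\lambda$ rather than the reverse), which changes nothing of substance and if anything makes the bookkeeping of the untouched first column cleaner.
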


\begin{proof}
  From \eqref{def-univo}, we have    
\[
\uoe_\lambda(-X,-\X,-1)
=\det \left(
     h_{\lambda_i-i+j}(-X,-\X,-1)- h_{\lambda_i-i-j}(-X,-\X,-1) 
 \right)_{1 \leq i,j \leq n}.
\]
Since $h_m(-X)=(-1)^m h_m(X)$,
\begin{align*}
\uoe_\lambda(-X,-\X,-1) 
&=\det \left(
     (-1)^{\lambda_i-i+j} h_{\lambda_i-i+j}(X,\X,1)- (-1)^{\lambda_i-i-j} h_{\lambda_i-i-j}(X,\X,1) 
 \right)_{1 \leq i,j \leq n}\\
 &=(-1)^{|\lambda|}\det \left(
      h_{\lambda_i-i+j}(X,\X,1)-  h_{\lambda_i-i-j}(X,\X,1) 
 \right)_{1 \leq i,j \leq n}\\
 &=(-1)^{|\lambda|} \oo_{\lambda}(X).
\end{align*}
So, the first equality holds true. 
Similarly, by \eqref{soneg-1}, 
\begin{equation}
\label{uoo}
\uoo_\lambda^-(-X,-\X,-1) 
=\det \left(
     h_{\lambda_i-i+j}(-X,-\X,-1)- h_{\lambda_i-i-j+1}(-X,-\X,-1)  
 \right)_{1 \leq i,j \leq n}.
\end{equation}
Observe that 
\begin{equation*}
\begin{split}
h_m(-X,-\X,-1,-1)-h_{m-2}(-X,-\X,-1,-1)& 
= 
h_m(-X,-\X,-1,-1)\\
+ h_{m-1}(-X,-\X,-1,-1)-h_{m-1}(-X,&-\X,-1,-1)-h_{m-2}(-X,-\X,-1,-1) \\
& = h_m(-X,-\X,-1)-h_{m-1}(-X,-\X,-1).
\end{split}
\end{equation*}
where the second equality comes from using \cref{prop:compid}.  
Applying it {to} the first column of the determinant in \eqref{uoo}, we get
\begin{equation*}
 \begin{split}
     \uoo_\lambda^-(-X,&-\X,-1) 
= \\
\det & \left(
\begin{array}{c|c}
       h_{\lambda_i-i+1}(W,-1)- h_{\lambda_i-i-1}(W,-1)  
   &     \left(  h_{\lambda_i-i+j}(W)- h_{\lambda_i-i-j+1}(W) \right)_{2 \leq j \leq n}  
\end{array} 
 \right)_{1 \leq i \leq n},
 \end{split}   
\end{equation*}
where $W=(-X,-\X,-1)$. 
{On the other end, we have 
\[
\oe_{\lambda}(-X,-1)= \det \left(  h_{\lambda_i-i+j}(W, -1)- h_{\lambda_i-i-j}(W, -1) \right).
\]
In this determinant, applying elementary column operations $C_{j} \rightarrow C_j+C_{j-1}$ for $j=n,\dots,2$ in order and then using \cref{prop:compid} 
completes the proof.}
\end{proof}

We consider the even orthogonal character with the same specialization below. First we recall the following result. 

\begin{lem}[{\cite[Corollary 3.8]{ayyer-kumari-2022}}]
\label{res-len-even}
    Let $\lambda$ be a partition of length 
at most $tn$.
Then core$_t(\lambda)$ is an orthogonal $t$-core if and only if $n_{0}(\lambda)=n \text{ and } n_{i}(\lambda)+n_{t-i}(\lambda)=2n \text{ for } 1 \leq i \leq \floor{\frac{t}{2}}$.
\end{lem}

\begin{thm} 
\label{thm:even}
Let $\lambda$ be a partition in $\P_{tn/2+t/4}$.
The $\OE(tn+{t}/{2})$-character $\oe_{\lambda}(X_{\omega})$ is 
nonzero if and only if $\core \lambda {t/2}$ is orthogonal.
In that case, it is given by 
    \begin{equation}
    \label{even-res}
    \begin{split}
  \oe_{\lambda}(X_{\omega})  = (-1)^{\epsilon} \sgn({\sigma}_{\lambda})  \oo_{\lambda^{(0)}}(X^{t/2}) \;&
\oe_{\lambda^{({t}/{4})}}(-X^{t/2},-1) \\
&\times 
 \prod_{i=1}^{{(t-4)/{4}}} 
 s_{(\lambda^{(i)},-\lambda^{(t/2-i)})_{2n+1}}(X^{t/2},\X^{t/2},1),
 \end{split}
    \end{equation}
    where 
    the $(t/2)$-quotient $(\lambda^{(0)},\dots,\lambda^{(t/2-1)})$ is evaluated by considering $\lambda \in \P_{tn+t/2}$ and 
\[
\epsilon=-\sum_{i=\floor{{(t+4)}/{4}}}^{t/2-1} 
\binom{n_{i}(\lambda)+1}{2} +
{n+1+ |\lambda^{(0)}|}+
+\sum_{i=1}^{{(t-4)/{4}}} 
 |{(\lambda^{(i)},-\lambda^{(t/2-i)})_{2n+1}}|.
\]
\end{thm}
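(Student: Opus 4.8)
The plan is to follow the template of the proof of \cref{thm:symp}, substituting the universal even orthogonal factorization for its symplectic analogue. First I would invoke the identity $(X_{\omega},\X_{\omega}) = (Z,\omega^2 Z,\dots,\omega^{t-2}Z)$ with $Z=(\omega X,\omega\X,\omega)$, together with the reduction $\oe_\lambda(Y)=\uoe_\lambda(Y,\bar Y,0,0,\dots)$ from \eqref{uni-class}, to rewrite the left-hand side as
\[
\oe_{\lambda}(X_{\omega}) = \uoe_{\lambda}(Z,\omega^2 Z,\dots,\omega^{t-2}Z).
\]
This realizes $\oe_\lambda(X_\omega)$ as an even orthogonal universal character evaluated at a single block $Z$ repeated $t/2$ times and twisted by the primitive $(t/2)$-th root of unity $\omega^2$.

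Next I would apply \cref{thm:eorthfact} with $t/2$, $\omega^2$ and $Z$ in place of $t$, $\omega$ and $\uX$. Since the $(t/2)$-quotient is computed by regarding $\lambda\in\P_{tn+t/2}$, the ambient rank of the applied theorem is $n'=2n+1$, because $(t/2)\,n' = tn+t/2$. As $t$ is a multiple of $4$, the value $t/2$ is \emph{even}, so \cref{thm:eorthfact} yields precisely the factors $\uoe_{\lambda^{(0)}}$, the products $\rs_{\lambda^{(i)},\lambda^{(t/2-i)}}$ for $1\le i\le (t-4)/4$, and the distinguished factor $\uoo^{-}_{\lambda^{(t/4)}}$, all evaluated at $Z^{t/2}$; it also supplies the nonvanishing criterion that $\core \lambda {t/2}$ be orthogonal, which is the stated condition. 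The crucial simplification is that $\omega^{t/2}=-1$, so that $Z^{t/2}=(-X^{t/2},-\X^{t/2},-1)$.

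I would then convert each factor into a genuine classical character via \cref{lem:relations-1} and \eqref{rs-s}. The first identity of \cref{lem:relations-1} turns $\uoe_{\lambda^{(0)}}(-X^{t/2},-\X^{t/2},-1)$ into $(-1)^{|\lambda^{(0)}|}\oo_{\lambda^{(0)}}(X^{t/2})$; the second turns $\uoo^{-}_{\lambda^{(t/4)}}(-X^{t/2},-\X^{t/2},-1)$ into $\oe_{\lambda^{(t/4)}}(-X^{t/2},-1)$. Since $\core \lambda {t/2}$ orthogonal forces $\ell(\lambda^{(i)})+\ell(\lambda^{(t/2-i)})\le 2n+1$, relation \eqref{rs-s} rewrites each $\rs_{\lambda^{(i)},\lambda^{(t/2-i)}}(Z^{t/2})$ as $s_{(\lambda^{(i)},-\lambda^{(t/2-i)})_{2n+1}}(-X^{t/2},-\X^{t/2},-1)$, which by homogeneity $s_\mu(-Y)=(-1)^{|\mu|}s_\mu(Y)$ equals $(-1)^{|(\lambda^{(i)},-\lambda^{(t/2-i)})_{2n+1}|}$ times the same polynomial evaluated at $(X^{t/2},\X^{t/2},1)$. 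Collecting these three kinds of factors reproduces the product in \eqref{even-res}.

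The main obstacle is the sign bookkeeping. One must combine (i) the exponent delivered by \cref{thm:eorthfact} under the substitutions $t\to t/2$ and $n\to 2n+1$, using $\tfrac{(2n+1)(2n+2)}{2}\equiv n+1$ and $(2n+1)r\equiv r\pmod 2$ to reduce it; (ii) the factor $(-1)^{|\lambda^{(0)}|}$ produced by the first identity of \cref{lem:relations-1}; and (iii) the factors $(-1)^{|(\lambda^{(i)},-\lambda^{(t/2-i)})_{2n+1}|}$ from the Schur conversions, and verify that their product agrees modulo $2$ with the claimed $\epsilon$. This is exactly where the argument diverges from \cref{thm:symp}: there the universal symplectic and $\uoo$ factors pass through \cref{lem:relations} with no extra parity, whereas here the even orthogonal factor carries the additional weight $(-1)^{|\lambda^{(0)}|}$, so correctly accounting for this contribution is the delicate point of the proof.
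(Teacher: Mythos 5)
Your proposal follows the paper's proof essentially verbatim: rewrite $\oe_{\lambda}(X_{\omega})$ as $\uoe_{\lambda}(Z,\omega^2 Z,\dots,\omega^{t-2}Z)$ with $Z=(\omega X,\omega\X,\omega)$, apply \cref{thm:eorthfact} with $t/2$, $\omega^2$ and ambient rank $2n+1$, and then convert the universal factors into classical ones via \cref{lem:relations-1} and \eqref{rs-s}. Your explicit flagging of the $(-1)^{|\lambda^{(0)}|}$ contribution from \cref{lem:relations-1} as the delicate point of the sign bookkeeping is, if anything, more careful than the paper, which invokes that lemma in one line without displaying how that sign is absorbed into $\epsilon$.
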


{Notice that the even orthogonal factor in \eqref{even-res} is not a true character.}

\begin{proof} 
Since
$(X_{\omega},\X_{\omega}) = (Z,\omega^2 Z,\dots, \omega^{t-2} Z)$, we have from \eqref{uni-class},
    \[
    \oe_{\lambda}(X_{\omega})=\uoe_\lambda(Z,\omega^2 Z,\dots, \omega^{t-2} Z),
    \]
where  $Z=(\omega X, \omega \X,\omega)$.
 If $\core \lambda {t/2}$ is not an orthogonal $(t/2)$-core, then by \cref{thm:eorthfact},
  we have  
   \[
   \oe_{\lambda}(X_{\omega})=0.
   \]
 If $\core \lambda {t/2}$ is an orthogonal $(t/2)$-core, then
 again by \cref{thm:eorthfact},
 we get
\[
 \oe_{\lambda}(X_{\omega}) = (-1)^{\epsilon} \sgn({\sigma}_{\lambda}) \uoe_{\lambda^{(0)}}(Z^{t/2}) \times
\uoo^{-}_{\lambda^{({t}/{4})}}(Z^{t/2})  \times 
\prod_{i=1}^{{(t-4)/4}} \rs_{\lambda^{(i)},\lambda^{(t/2-i)}}(Z^{t/2}),
\]
where 
\[
\epsilon=-\sum_{i=\floor{{(t+4)}/{4}}}^{t/2-1} 
\binom{n_{i}(\lambda)+1}{2} +
{\frac{(2n+1)(2n+2)}{2}+(2n+2)r},
\]
and 
the $(t/2)$-quotient $(\lambda^{(0)},\dots,\lambda^{(t/2-1)})$ is evaluated by considering $\lambda  \in \P_{tn+t/2}$. 
Since $\ell(\lambda) \leq \left(2n+1\right)t/4$, $\core \lambda {t/2}$ is an orthogonal $(t/2)$-core, $\ell(\lambda^{(i)})+\ell(\lambda^{(t/2-i)}) \leq 2n+1$
by \cref{res-len-even}. Now using \eqref{rs-s}, 
\[
\rs_{\lambda^{(i)},\lambda^{(t/2-i)}}(Z^{t/2})=
s_{(\lambda^{(i)},-\lambda^{(t/2-i)})_{2n+1}}(-X^{t/2},-\X^{t/2},-1),
\] 
for $0 \leq i \leq {{(t-8)}/{4}}$. So, 
we have,
\begin{equation*}
    \begin{split}
  \oe_{\lambda}(X_{\omega})  =& (-1)^{\epsilon} \sgn({\sigma}_{\lambda}) \uoe_{\lambda^{(0)}}(-X^{t/2},-\X^{t/2},-1) 
  \times \uoo^{-}_{\lambda^{({t/4})}}(-X^{t/2},-\X^{t/2},-1) \\
&  \times
 \prod_{i=1}^{{(t-4)/4}} 
 s_{(\lambda^{(i)},-\lambda^{(t/2-i)})_{2n+1}}(-X^{t/2},-\X^{t/2},-1).
\end{split}
    \end{equation*}
Now we get the required result using~\cref{lem:relations-1}.     
\end{proof}

\subsection{Odd orthogonal character factorization}

We now consider the odd orthogonal character with the same specialization. Recall the following result.

\begin{lem}[{\cite[Corollary 3.3]{ayyer-kumari-2022}}]
\label{res-len-odd}
    For a partition $\lambda$ of length at most $tn$,
$\core \lambda t$ is self-conjugate if and only if
\[
n_{i}(\lambda)+n_{t-1-i}(\lambda)=2n, 
\quad 0 \leq i \leq \floor{{(t-1)}/{2}}.
\]
\end{lem}

\begin{thm} 
\label{thm:odd}
Let $\lambda$ be a partition in $\P_{tn/2+t/4}$.
Then the $\OO(tn+{t}/{2}+1)$-character $\oo_{\lambda}(X_{\omega})$ is 
nonzero if and only if $\core \lambda {t/2}$ is self-conjugate.
In that case, it is given by 
    \begin{equation}
    \label{odd-res}
  \oo_{\lambda}(X_{\omega})  = (-1)^{\epsilon} \sgn({\sigma}_{\lambda})  
 \prod_{i=0}^{{(t-4)/4}}
 s_{(\lambda^{(i)},-\lambda^{(t/2-1-i)})_{2n+1}}(X^{t/2},\X^{t/2},1),
    \end{equation}
    where 
    the $(t/2)$-quotient $(\lambda^{(0)},\dots,\lambda^{(t/2-1)})$ is evaluated by considering $\lambda  \in \P_{tn+t/2}$ and 
    \[
    \epsilon=-\ds \sum_{i=\floor{{(t+2)}/{4}}}^{t/2-1} \binom{n_{i}(\lambda)+1}{2}
    +
    \sum_{i=0}^{{(t-4)/4}}
 |{(\lambda^{(i)},-\lambda^{(t/2-1-i)})_{2n+1}}|. 
\]
\end{thm}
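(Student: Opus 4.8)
The plan is to reduce the statement to the universal odd orthogonal factorization of \cref{thm:oorthfact}, exactly as in the proofs of \cref{thm:symp} and \cref{thm:even}. Using the identity $\oo_\lambda(X) = \uoo_\lambda(X, \X, 0, 0, \dots)$ together with $(X_\omega, \X_\omega) = (Z, \omega^2 Z, \dots, \omega^{t-2} Z)$ where $Z = (\omega X, \omega \X, \omega)$, and absorbing the trailing zeros into the universal character, I would write
\[
\oo_\lambda(X_\omega) = \uoo_\lambda(Z, \omega^2 Z, \dots, \omega^{t-2} Z).
\]
The right-hand side is $\uoo_\lambda$ evaluated at $t/2$ copies of $Z$ twisted by the primitive $(t/2)$-th root of unity $\omega^2$, so \cref{thm:oorthfact} applies with $t/2$ in place of $t$, $\omega^2$ in place of $\omega$, $Z$ in place of $\uX$, and $2n+1$ in place of $n$, the $(t/2)$-quotient being taken with $\lambda \in \P_{tn+t/2}$. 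Since $t$ is a multiple of $4$, the integer $t/2$ is even, so \cref{thm:oorthfact} gives that $\oo_\lambda(X_\omega)$ is nonzero if and only if $\core \lambda {t/2}$ is self-conjugate, and the trailing $\uoo$ factor is trivial. This is precisely what makes the odd orthogonal case lighter than the other two: the factorization is a pure product of $\rs$ factors, so no analogue of \cref{lem:relations} or \cref{lem:relations-1} is required.

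Assuming $\core \lambda {t/2}$ is self-conjugate, \cref{thm:oorthfact} produces
\[
\oo_\lambda(X_\omega) = (-1)^{\epsilon_0} \sgn(\sigma_\lambda) \prod_{i=0}^{(t-4)/4} \rs_{\lambda^{(i)}, \lambda^{(t/2-1-i)}}(Z^{t/2}),
\]
with $\epsilon_0 = -\sum_{i=t/4}^{t/2-1} \binom{n_i(\lambda)+1}{2}$; here I would record that $\floor{(t/2-2)/2} = (t-4)/4$ and $\floor{(t/2+1)/2} = t/4 = \floor{(t+2)/4}$, both valid because $t/2$ is even. Next I would evaluate the argument: as $\omega$ is a primitive $t$-th root of unity we have $\omega^{t/2} = -1$, whence $Z^{t/2} = (-X^{t/2}, -\X^{t/2}, -1)$. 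After checking the length bound $\ell(\lambda^{(i)}) + \ell(\lambda^{(t/2-1-i)}) \leq 2n+1$ — which follows from $\ell(\lambda) \leq t(2n+1)/4$ and self-conjugacy of the $(t/2)$-core, as in the previous proofs — \eqref{rs-s} converts each factor into
\[
\rs_{\lambda^{(i)}, \lambda^{(t/2-1-i)}}(Z^{t/2}) = s_{(\lambda^{(i)}, -\lambda^{(t/2-1-i)})_{2n+1}}(-X^{t/2}, -\X^{t/2}, -1).
\]

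Finally, since each $(\lambda^{(i)}, -\lambda^{(t/2-1-i)})_{2n+1}$ is a genuine partition and $s_\mu$ is homogeneous of degree $|\mu|$, I would extract the overall sign through $s_\mu(-Y) = (-1)^{|\mu|} s_\mu(Y)$, rewriting each factor as $s_{(\lambda^{(i)}, -\lambda^{(t/2-1-i)})_{2n+1}}(X^{t/2}, \X^{t/2}, 1)$ at the cost of $(-1)^{|(\lambda^{(i)}, -\lambda^{(t/2-1-i)})_{2n+1}|}$. Summing these contributions over $0 \leq i \leq (t-4)/4$ adds $\sum_{i=0}^{(t-4)/4} |(\lambda^{(i)}, -\lambda^{(t/2-1-i)})_{2n+1}|$ to $\epsilon_0$, yielding exactly the stated exponent $\epsilon$ and finishing the proof. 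I expect the only genuine obstacle to be this final sign bookkeeping, namely collecting the homogeneity signs and confirming the two floor reindexings once $t/2$ is known to be even; everything else is a mechanical specialization of \cref{thm:oorthfact}.
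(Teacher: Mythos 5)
Your proposal is correct and follows essentially the same route as the paper: reduce $\oo_\lambda(X_\omega)$ to $\uoo_\lambda(Z,\omega^2 Z,\dots,\omega^{t-2}Z)$, apply \cref{thm:oorthfact} with $t/2$, $\omega^2$ and $2n+1$ in place of $t$, $\omega$ and $n$, convert the $\rs$ factors via \eqref{rs-s} at $Z^{t/2}=(-X^{t/2},-\X^{t/2},-1)$, and absorb the homogeneity signs into $\epsilon$. The only difference is that you make the final sign bookkeeping and the floor-function reindexings explicit, which the paper leaves implicit.
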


\begin{proof} 
Arguing as in the symplectic case,
    \[
    \oo_{\lambda}(X_{\omega})=
 \uoo_\lambda(Z,\omega^2 Z,\dots, \omega^{t-2} Z),   
    \]
    where $Z=(\omega X, \omega \X,\omega)$.
 If $\core \lambda {t/2}$ is not a self-conjugate $(t/2)$-core, then again by
 \cref{thm:oorthfact},
 we get  
   \[
    \oo_{\lambda}(X_{\omega}) =0.
   \]
 If $\core \lambda {t/2}$ is a self-conjugate $(t/2)$-core, then by \cref{thm:oorthfact}, 
 we see that
 \[
 \oo_{\lambda}(X_{\omega}) = (-1)^{\epsilon} \sgn({\sigma}_{\lambda})  \, 
 \prod_{i=0}^{{(t-4)/4}} \rs_{\lambda^{(i)},\lambda^{(t/2-1-i)}}(Z^{t/2}),
\]
where 
\[
\epsilon=-\ds \sum_{i=\floor{{(t+2)}/{4}}}^{t/2-1} \binom{n_{i}(\lambda)+1}{2}, 
\]
and the $(t/2)$-quotient $(\lambda^{(0)},\dots,\lambda^{(t/2-1)})$ is evaluated by considering
$\lambda  \in \P_{tn+t/2}$.
Since $\ell(\lambda) \leq \left(2n+1\right)t/4$, $\core \lambda {t/2}$ is a self-conjugate $(t/2)$-core, $\ell(\lambda^{(i)})+\ell(\lambda^{(t/2-1-i)}) \leq 2n+1$ by \cref{res-len-odd}. Now by \eqref{rs-s}, 
\[
\rs_{\lambda^{(i)},\lambda^{(t/2-1-i)}}(Z^{t/2})
=s_{(\lambda^{(i)},-\lambda^{(t/2-1-i)})_{2n+1}}(-X^{t/2},-\X^{t/2},-1),
\]
for $0 \leq i \leq {{(t-4)}/{4}}$. So, we have  
\begin{equation*}
  \oo_{\lambda}(X_{\omega}) = (-1)^{\epsilon} \sgn({\sigma}_{\lambda})  
 \prod_{i=0}^{{(t-4)/4}} 
 s_{(\lambda^{(i)},-\lambda^{(t/2-1-i)})_{2n+1}}(-X^{t/2},-\X^{t/2},-1). 
\end{equation*}
    This completes the proof.
\end{proof}

\begin{rem}
We note that  
\cref{thm:symp,thm:even,thm:odd} 
for $t=4$ can also be obtained directly from~\cite[Theorem 2.8, Theorem 2.11 and Theorem 2.5]{kumari-2024} respectively.  
\end{rem}  

\section{Relations among characters of classical groups}
\label{sec:rels}

In this section, we look at unexpected equalities between characters of different classical groups. This section is motivated by conjectures by Prasad and Wagh~\cite{prasad-wagh-2020}, which relate irreducible representations of one classical group invariant under a certain automorphism to the twisted characters of the other group.
{To be more precise, they consider
the relationships between the selfdual representations of $\SL(2n)$ and $\text{Spin}(2n+1)$ in \cite[Conjecture 1]{prasad-wagh-2020},
and of $\SL(2n+1)$ and $\Sp(2n)$ in \cite[Conjecture 3]{prasad-wagh-2020}.
Prasad\footnote{private communication} has also considered the 
relationships between the selfdual representations of $\Sp(2n)$ and $\text{Spin}(2n+2)$, as well as $E_6$ and $F_4$.
We thus, compare the characters of the first three pairs of representations in this section.
}
In our notation, their conjectures correspond to {$t = 1$}. We look at more general $t$ and observe new identities for characters.
We will need the following result.

\begin{lem}[{\cite[Lemma 3.15]{ayyer-kumari-2022}}] 
\label{lem:eqqq}
Let $\mu$ be a partition in $\P_{tn}$. 
If $p,q \in \{0,1,\dots,t-1\}$ such that 
$n_{p}(\mu)+n_{q}(\mu)=2n$, then 
\[ 
s_{(\mu^{(q)},-\mu^{(p)})_{2n}}(X,{\X})=s_{(\mu^{(p)},-\mu^{(q)})_{2n}}(X,{\X}).
\]
\end{lem}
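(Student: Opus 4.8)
The plan is to deduce the identity from the behaviour of Schur polynomials under complementation (duality) when specialized at the inversion-invariant alphabet $(X,\X)$. Write $\nu = (\mu^{(q)},-\mu^{(p)})_{2n}$ and let $\hat\nu$ denote the dual of $\nu$ regarded as indexing a representation of $\GL_{2n}$, as in \cref{sec:back}. I claim two things: first, that $\hat\nu = (\mu^{(p)},-\mu^{(q)})_{2n}$, and second, that $s_\nu(X,\X) = s_{\hat\nu}(X,\X)$; together these give the lemma at once. The hypothesis $n_p(\mu)+n_q(\mu) = 2n$ enters only to guarantee that both partitions are well defined: since $\ell(\mu^{(i)}) \le n_i(\mu)$ for every $i$ (immediate from $\mu^{(i)}_j = \tilde\beta^{(i)}_j - n_i(\mu) + j$ with $\tilde\beta^{(i)}_{n_i(\mu)} \ge 0$), it forces $\ell(\mu^{(p)}) + \ell(\mu^{(q)}) \le 2n$, which is the only constraint required by the notation $(\cdot,-\cdot)_{2n}$.

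For the second claim I would record the standard complementation identity for Schur polynomials in $N=2n$ variables: if $\hat\nu$ is the dual of $\nu$ in the bounding rectangle with $\nu_1$ columns and $2n$ rows, then
\[
s_{\hat\nu}(y_1,\dots,y_{2n}) = (y_1\cdots y_{2n})^{\nu_1}\, s_\nu(y_1^{-1},\dots,y_{2n}^{-1}),
\]
which follows directly from the bialternant formula by factoring $(y_1\cdots y_{2n})^{\nu_1}$ out of the numerator and reversing the columns. Now specialize $(y_1,\dots,y_{2n}) = (X,\X)$. The product of all variables is $x_1\cdots x_n\,\x_1\cdots\x_n = 1$, and the alphabet $(X,\X)$ is stable under $y \mapsto 1/y$; since $s_\nu$ is symmetric, $s_\nu$ evaluated at the reciprocals of $(X,\X)$ equals $s_\nu(X,\X)$. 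Substituting these two facts into the displayed identity yields $s_{\hat\nu}(X,\X) = s_\nu(X,\X)$.

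The remaining and main step is the purely combinatorial verification that the dual of $(\alpha,-\beta)_{2n}$ is $(\beta,-\alpha)_{2n}$ for any partitions $\alpha,\beta$ with $\ell(\alpha)+\ell(\beta)\le 2n$; applying this with $\alpha=\mu^{(q)}$, $\beta=\mu^{(p)}$ settles the first claim. Writing $\nu = (\alpha,-\beta)_{2n} = \beta_1 + (\alpha,0,\dots,0,-\rev(\beta))$ and splitting the index set into the three blocks (the $\alpha$-block, the constant middle block of value $\beta_1$ with $2n-\ell(\alpha)-\ell(\beta)$ entries, and the $-\rev(\beta)$-block), one applies $\hat\nu_i = \nu_1 - \nu_{2n+1-i}$ with $\nu_1 = \alpha_1+\beta_1$ and checks block by block that $\hat\nu_i$ equals $\alpha_1+\beta_i$, then $\alpha_1$, then $\alpha_1-\alpha_{2n+1-i}$ over the three ranges, which is precisely $(\beta,-\alpha)_{2n} = \alpha_1 + (\beta,0,\dots,0,-\rev(\alpha))$. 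I expect the only delicate part to be the bookkeeping of the index ranges and the placement of the trailing zeros, ensuring the reversal carries each block to its intended counterpart; the example $\widehat{((4,1),-(2,1,1))_7} = (6,5,5,4,4,3,0) = ((2,1,1),-(4,1))_7$ serves as a useful sanity check.
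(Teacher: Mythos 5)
Your proof is correct and follows essentially the same route as the cited source: identify $(\mu^{(p)},-\mu^{(q)})_{2n}$ as the dual of $(\mu^{(q)},-\mu^{(p)})_{2n}$ inside the $\nu_1\times 2n$ rectangle and use that dual $\GL_{2n}$-representations have equal characters on the inversion-stable alphabet $(X,\X)$, where the determinant factor $(y_1\cdots y_{2n})^{\nu_1}$ equals $1$. Your block-by-block verification that $\widehat{(\alpha,-\beta)_{2n}}=(\beta,-\alpha)_{2n}$ checks out, and you correctly observe that the hypothesis $n_p(\mu)+n_q(\mu)=2n$ is needed only to ensure $\ell(\mu^{(p)})+\ell(\mu^{(q)})\leq 2n$ so that both partitions are defined.
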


\subsection{\texorpdfstring{$\Sp({2tn})$}{TEXT} and \texorpdfstring{$\GL({2tn+1})$}{TEXT}}
\label{sec:sp-gl}

The first pair is the symplectic group $\Sp({2tn})$ and the general linear group 
$\GL({2tn+1})$.
Recall that $\omega$ is the primitive $t$'th root of unity, $X=(x_1,x_2,\dots,x_n)$ and  
let $\tX=(X,\X)$.
Let $\mu \in \P_{tn}$ 
index a representation of $\Sp({2tn})$, and construct
$\lambda=(\pm \mu)_{2tn+1}$.
We recall the following result.

\begin{lem}[{\cite[Lemma 4.2]{ayyer-kumari-2022}}]
\label{sp-gl-nonzero}
With $\mu$ and $\lambda$ as above, $s_{\lambda}(\widetilde{X},\omega \tX,\dots,\omega^{t-1} \tX,1)$ is nonzero if and only if $\sp_{\mu}(X,\omega X, \dots ,\omega^{t-1}X)$ is nonzero.
\end{lem}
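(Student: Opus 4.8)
The plan is to read off the nonvanishing condition from the appropriate factorization theorem on each side and then match the two conditions combinatorially at the level of $t$-cores.

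First I would rewrite the Schur specialization so that \cref{thm:schur-k} applies directly. Since $\tX=(X,\X)$ has $2n$ entries and the evaluation point is $(\tX,\omega\tX,\dots,\omega^{t-1}\tX,1)$, this is exactly the specialization in \cref{thm:schur-k} with $\tX$ as the base tuple, with $2n$ in place of $n$, with $m=1$, and with $y=1$; note $\lambda=(\pm\mu)_{2tn+1}\in\P_{t(2n)+1}$. Hence $s_{\lambda}(\tX,\omega\tX,\dots,\omega^{t-1}\tX,1)$ is nonzero if and only if $\core{\lambda}{t}$ is a single-row partition $(\nu_1)$ with $\nu_1\le t-1$ (the empty partition included).

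Next I would put the symplectic side into the form of \cref{thm:sympfact}. Using $\sp_\mu(Y)=\usp_\mu(Y,\bar Y,0,0,\dots)$ from \eqref{uni-class} together with the fact that the reciprocal of $\omega^kX$ is $\omega^{-k}\X=\omega^{t-k}\X$, the reciprocals fold back in so that the combined variable multiset of $\sp_\mu(X,\omega X,\dots,\omega^{t-1}X)$ is $(\tX,\omega\tX,\dots,\omega^{t-1}\tX)$ followed by trailing zeros; that is,
\[
\sp_\mu(X,\omega X,\dots,\omega^{t-1}X)=\usp_\mu(\tX,\omega\tX,\dots,\omega^{t-1}\tX,0,0,\dots).
\]
By \cref{thm:sympfact} this is nonzero if and only if $\core{\mu}{t}$ is symplectic: the factors in the product there are genuine finite-variable characters in $X^t,\X^t$ (via \eqref{rs-s}) and do not vanish, so the whole expression vanishes exactly when the core condition fails.

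It remains -- and this is the crux -- to prove the purely combinatorial equivalence
\[
\core{(\pm\mu)_{2tn+1}}{t}\ \text{is a single row of size}\ \le t-1 \iff \core{\mu}{t}\ \text{is symplectic.}
\]
Here I would argue with beta-sets. Writing $N=2tn+1$ and $B=\beta(\lambda,N)$ for $\lambda=(\pm\mu)_{2tn+1}$, the self-duality $\lambda_i+\lambda_{N+1-i}=2\mu_1$ translates into the central symmetry $b\in B\iff C-b\in B$ with $C=2\mu_1+N-1$, and one checks that $B$ is built explicitly from $\beta(\mu,tn)$ by this reflection. Since a $t$-core is determined by the residue-class profile $(n_0(\lambda),\dots,n_{t-1}(\lambda))$ of $B$ modulo $t$, the reflection $b\mapsto C-b$ pairs up residues and identifies this profile with a symmetric pairing of the profile $(n_0(\mu),\dots,n_{t-1}(\mu))$ of $\mu$. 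The condition that $\core{\lambda}{t}$ is a single row of size at most $t-1$ is precisely the condition that all but one of the $n_i(\lambda)$ equal $2n$ (the remaining one being $2n+1$, forced by $N\equiv 1\pmod t$), and I would show that this balanced-up-to-one profile is equivalent, under the reflection, to the profile characterizing a symplectic core $\core{\mu}{t}=(\alpha\mid\alpha+1)$. The main obstacle is exactly this residue bookkeeping: getting the reflection constant $C$ and the off-by-one coming from the odd count $N=2tn+1$ of variables to align so that the two core shapes correspond. The standard beta-set description of symplectic cores, together with the pairing $n_p(\mu)+n_q(\mu)=2n$ underlying \cref{lem:eqqq}, are the tools I expect to need to close this step.
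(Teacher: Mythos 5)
Your proposal follows essentially the same route as the paper: the lemma is cited from [AK22], and the paper records precisely the strategy you describe, namely reading off nonvanishing from \cref{thm:schur-k} (with $\tX$ as base tuple, $m=1$, $y=1$) and \cref{thm:sympfact}, and then proving combinatorially that $\ell(\core{\lambda}{t})\leq 1$ if and only if $\core{\mu}{t}$ is symplectic. The residue bookkeeping you defer does close exactly as you expect --- the reflection of $\beta(\lambda)$ about $\mu_1+tn$ gives $n_j(\lambda)=n_{i}(\mu)+n_{t-2-i}(\mu)+[\,j\equiv \mu_1 \bmod t\,]$ with $i\equiv j-\mu_1-1 \pmod t$, so the ``all counts equal $2n$ except one equal to $2n+1$'' profile for $\lambda$ is equivalent to the symplectic-core condition $n_i(\mu)+n_{t-2-i}(\mu)=2n$ for all $i$ --- and the paper carries out this identical beta-set computation for the orthogonal analogue in \cref{sec:oo-gl}.
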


This was proved by showing that $\ell{(\core \lambda t)}$ is at most one if and only if $\core \mu t$ is symplectic. In this case, $\core \lambda t=(p)$, where $p \equiv \mu_1 \pmod{t}$ and $p \in \{0, \dots, t-1\}$.

\begin{lem}
\label{lem:quo}
Let $\mu \in \P_{tn}$ with $\core \mu t$ symplectic, and $\lambda=(\pm \mu)_{2tn+1}$ with $\core \lambda t=(p)$ and $p \equiv \mu_1 \pmod{t}$.
Then, for $0 \leq i \leq t-1$, we have 
\[
    \lambda^{(i)}=
    \begin{cases}
\left(\mu^{(i-p-1)}, -\mu^{(p-i-1)}\right)_{2n} & i \neq p,\\
\left( \pm \mu^{(t-1)}\right)_{2n+1} & i = p.
    \end{cases}
    \]
where {the index $k$ in the quotient $\mu^{(k)}$ must be interpreted modulo $t$.}
\end{lem}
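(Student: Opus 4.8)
The plan is to work entirely with beta-sets and the abacus description of the Littlewood bijection, computing $\beta(\lambda,2tn+1)$ directly from $\beta(\mu,tn)$ and then reading off the $t$-quotient runner by runner. Writing $\mu$ with its full $tn$ parts (padding by zeros), the self-dual partition $\lambda=(\pm\mu)_{2tn+1}$ has beta-set
\[
\beta(\lambda,2tn+1)=B^{+}\sqcup\{M\}\sqcup B^{-},\qquad M=\mu_1+tn,
\]
where $B^{+}=\{M+1+\beta_i(\mu,tn):1\le i\le tn\}$ is the ``top'' block and $B^{-}=\{M-1-\beta_i(\mu,tn):1\le i\le tn\}$ is the ``bottom'' block. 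The first thing I would record is that this set is symmetric under the reflection $x\mapsto 2M-x$, reflecting the self-duality of $\lambda$, and that $M\equiv\mu_1\equiv p\pmod t$, which is consistent with the fact (already used in \cref{sp-gl-nonzero}) that $\core\lambda t=(p)$.

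Next I would distribute the beads by residue class modulo $t$. Since $B^{+}$ shifts by $M+1$ and $B^{-}$ reflects and shifts by $M-1$, a bead of $\beta(\mu,tn)$ of residue $c$ lands on runner $c+p+1$ via $B^{+}$ and on runner $p-1-c$ via $B^{-}$. Hence for $i\ne p$, runner $i$ of $\lambda$ collects exactly the residue-$(i-p-1)$ beads of $\mu$ (from $B^{+}$, forming the top part) together with the residue-$(p-1-i)$ beads of $\mu$ (from $B^{-}$, negated and reversed, forming the bottom part), while runner $p$ collects the residue-$(t-1)$ beads of $\mu$ from \emph{both} blocks plus the central bead $M$. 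Applying the normalization $\lambda^{(a)}_j=\tilde\beta^{(a)}_j-n_a(\lambda)+j$, the top beads become $\mu^{(i-p-1)}$ shifted by a constant and the bottom beads become $-\rev(\mu^{(p-i-1)})$ shifted by the same constant, which is the shape of $(\mu^{(i-p-1)},-\mu^{(p-i-1)})_{2n}$; for $a=p$ the central bead contributes the extra part that upgrades the doubled block to $(\pm\mu^{(t-1)})_{2n+1}$. At this stage I would invoke the hypothesis that $\core\mu t$ is symplectic, in the form of the residue-count relations for symplectic cores (the pairing $c\leftrightarrow t-2-c$ underlying \cref{thm:sympfact} and \cref{lem:eqqq}), to conclude $n_i(\lambda)=2n$ for $i\ne p$ and $n_p(\lambda)=2n+1$, so that the bead counts match the stated ambient lengths $2n$ and $2n+1$.

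The delicate part, and the step I expect to be the main obstacle, is the additive bookkeeping: I must check that the constant shift produced by reducing a $B^{+}$ bead of the form $M+1+\beta$ on runner $i$ (namely $(M+1-i+u)/t-n_v(\mu)$, with $u=(i-p-1)\bmod t$ and $v=(p-1-i)\bmod t$) agrees with the shift $\mu^{(p-i-1)}_1$ that is built into the notation $(\cdot,-\cdot)_{2n}$, and that the top and bottom shifts coincide; the identity $u+v=t-2$ (valid precisely because $i\ne p$ forbids $u=v=t-1$) is what forces these to match, and it is exactly the symplectic structure of $\core\mu t$ that pins the relevant largest residue-$v$ bead of $\mu$ to the correct height. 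Throughout I would need to respect the necklace (cyclic) ambiguity in the labeling of $\quo\lambda t$ and fix the convention so that the dual-paired runners are assigned the indices $i$ and, implicitly, their reflections; keeping these index reductions modulo $t$ consistent between $\lambda$ and $\mu$ is where the argument is most error-prone, and I would verify it on a small case such as $t=3,\ n=1,\ \mu=(3,2)$ before writing out the general bead computation.
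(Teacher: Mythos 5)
Your proposal follows exactly the route the paper takes: write $\beta(\lambda,2tn+1)$ as $B^{+}\sqcup\{M\}\sqcup B^{-}$ with $M=\mu_1+tn$, observe that a bead of $\beta(\mu,tn)$ of residue $c$ lands on runner $c+p+1$ via $B^{+}$ and on runner $p-1-c$ via $B^{-}$, and then normalize runner by runner; your bead distribution, and your use of the symplectic residue counts to get $n_i(\lambda)=2n$ for $i\neq p$ and $n_p(\lambda)=2n+1$, are all correct and agree with the paper. The problem is precisely the step you flag as ``the delicate part'' and then assert will work out: the additive bookkeeping does \emph{not} close. Carrying your normalization to the end, runner $i\neq p$ yields $\lambda^{(i)}=c_i+\bigl(\mu^{(i-p-1)},0,\dots,0,-\rev(\mu^{(p-i-1)})\bigr)$, where the shift $c_i$ of the $B^{+}$ block works out to $\lfloor\mu_1/t\rfloor-n+n_{i-p-1}(\mu)+[i<p]-\mu^{(p-i-1)}_1$, and neither $u+v=t-2$ nor the symplectic structure of $\core{\mu}{t}$ forces this to vanish. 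Concretely, take $t=3$, $n=1$, $\mu=(1,1)$, a symplectic $3$-core, so every $\mu^{(j)}=\emptyset$ and $p=1$. Then $\lambda=(2,2,1,1,1)$, $\beta(\lambda,7)=(8,7,5,4,3,1,0)$, runner $2$ carries the beads $\{8,5\}$, and $\lambda^{(2)}=(1,1)$, whereas the claimed answer is $(\mu^{(0)},-\mu^{(1)})_2=\emptyset$. (A size count makes this stark: $7=|\lambda|=|\core{\lambda}{3}|+3\sum_i|\lambda^{(i)}|=1+3\cdot 2$, so the quotients cannot all be empty.) The central runner fails too: for $\mu=(5,1,1)$ with $t=3$, $n=1$ one gets $\lambda^{(2)}=(1,1,1)$ while $(\pm\mu^{(2)})_3=\emptyset$. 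So no amount of care in the bead computation can establish the statement as written; the statement itself is what needs repair.

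What your computation (and the paper's, if pushed to the same endpoint) actually proves is that $\lambda^{(i)}$ equals the claimed partition \emph{up to adding a constant $c_i\geq 0$ to all of its $2n$ (resp.\ $2n+1$) parts}, i.e.\ up to a $2n\times c_i$ rectangle. That weaker statement is all that is used downstream: in \cref{thm:sp-gl} the quotients enter only through Schur polynomials specialized at $(X^t,\X^t)$ or $(\tX^t,1)$, and adjoining a rectangle multiplies such a Schur polynomial by $\bigl(x_1^t\cdots x_n^t\,\x_1^t\cdots\x_n^t\bigr)^{c_i}=1$, so \eqref{schur-1} and the theorem survive. I would therefore either restate the lemma as an equality up to a rectangle and finish your bead computation at that level of precision, or isolate the exact condition $c_i=0$ as an additional hypothesis; but your appeal to the symplectic core ``pinning the largest residue-$v$ bead to the correct height'' is exactly the assertion that the counterexamples refute, so as written the proof cannot be completed.
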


\begin{proof} We have 
    \begin{multline*}
\beta(\lambda)=(2\mu_1+2tn,\mu_1+\mu_2+2tn-1,\dots,\mu_1+\mu_{tn}+tn-1,{\mu_{1}+tn}, \\
\mu_1-\mu_{tn}+tn-1,\dots,\mu_1-\mu_2+1,0),     
\end{multline*}
where the first $tn$ parts are of the form $\mu_1+tn+1+\beta(\mu)$ and the last $tn$ parts are of the form $\mu_1+tn-1-\rev(\beta(\mu))$.  
Let 
{$\beta^{(i)}(\mu)$}
be the tuple with coordinates the parts of $\beta(\mu)$ congruent to $i$ modulo $t$ arranged in decreasing order. Then 
\[
\beta^{(i)}(\lambda)=
\begin{cases}
\left(\mu_1+tn+1+\beta^{(i-p-1)}(\mu),\, \mu_1+tn-1-\rev(\beta^{(p-i-1)}(\mu))\right) & i \neq p,\\
\left(\mu_1+tn+1+\beta^{(t-1)}(\mu),\, {\mu_1+tn}, \,\mu_1+tn-1-\rev(\beta^{(t-1)}(\mu))\right) & i = p.
\end{cases}
\]
Therefore, the result holds true.
\end{proof}

We now consider the nonzero characters and prove the following factorization result, which can also be directly obtained from \cref{thm:facx1} by replacing $X$ with the variables $X,\omega X, \dots ,\omega^{t-1}X$.

\begin{thm}
\label{thm:sp-gl}
Suppose $\mu \in \P_{tn}$ and $\lambda=(\pm \mu)_{2tn+1}$. 
Then
\[
s_{\lambda}(\tX,\omega \tX,\dots,\omega^{t-1} \tX,1) 
=  
\sp_{\mu}(X,\omega X, \dots ,\omega^{t-1}X) \;
\oe_{\mu}(X,\omega X, \dots ,\omega^{t-1}X,1).
\]

\end{thm}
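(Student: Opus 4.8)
The plan is to obtain this directly from \cref{thm:facx1} by a change of variables, exactly as flagged in the statement. \cref{thm:facx1} is a Laurent-polynomial identity valid for a tuple of variables of any length: for a partition $\mu$ with $m$ parts it reads $s_{(\pm\mu)_{2m+1}}(Y,\bar Y,1)=\sp_\mu(Y)\,\oe_\mu(Y,1)$, where $Y$ is an $m$-tuple of indeterminates and $\bar Y$ is its entrywise reciprocal. Since $\mu\in\P_{tn}$, I would take $m=tn$ and substitute $Y=(X,\omega X,\dots,\omega^{t-1}X)$. The right-hand side then becomes $\sp_\mu(X,\omega X,\dots,\omega^{t-1}X)\,\oe_\mu(X,\omega X,\dots,\omega^{t-1}X,1)$, which is exactly the product we want, so the whole argument reduces to identifying the left-hand side with $s_\lambda(\tX,\omega\tX,\dots,\omega^{t-1}\tX,1)$.

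The one computation to perform is the reciprocal tuple $\bar Y$. Using $\bar x=1/x$, the reciprocal of $\omega^j x_k$ is $\omega^{-j}\x_k=\omega^{t-j}\x_k$, so $\bar Y=(\X,\omega^{t-1}\X,\dots,\omega\X)$, whose entries form the set $\{\omega^i\x_k:0\le i\le t-1,\ 1\le k\le n\}$. Hence the entries of $(Y,\bar Y,1)$ are the multiset $\{\omega^j x_k\}\cup\{\omega^i\x_k\}\cup\{1\}$. On the other side, $\omega^j\tX=(\omega^jX,\omega^j\X)$, so the entries of $(\tX,\omega\tX,\dots,\omega^{t-1}\tX,1)$ form the very same multiset, merely grouped by power of $\omega$ rather than by sign of the exponent. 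Because $s_\lambda$ is symmetric and $\lambda=(\pm\mu)_{2tn+1}$ is the same partition in both places, the two specializations of the Schur polynomial coincide, and combining with the substituted \cref{thm:facx1} yields the theorem. The only delicate point in this route is the bookkeeping of the reciprocals of the twisted variables; once the two variable multisets are seen to agree, the identity is forced by symmetry, so I do not expect any genuine obstacle here.

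For completeness I would also sketch the more structural proof suggested by the surrounding lemmas: factor $s_\lambda(\tX,\omega\tX,\dots,\omega^{t-1}\tX,1)$ through \cref{thm:schur-fac}, factor $\sp_\mu(X,\omega X,\dots,\omega^{t-1}X)$ through \cref{thm:sympfact} and $\oe_\mu(X,\omega X,\dots,\omega^{t-1}X,1)$ through \cref{thm:evenfac-1}, and then match the resulting pieces using \cref{lem:quo} to rewrite each $\lambda^{(i)}$ in terms of the $\mu^{(k)}$ and \cref{lem:eqqq} to pair the skew and $\rs$ factors. There the main obstacle would be reconciling the several separate sign prefactors --- the $\sgn(\sigma_\lambda)$ coming from the left with the two $(-1)^\epsilon\sgn(\sigma_\mu)$ contributions arising on the right --- which is precisely the computation the direct route via \cref{thm:facx1} sidesteps.
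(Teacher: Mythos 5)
Your primary argument is correct, and it is \emph{not} the proof the paper actually writes out --- although the paper does acknowledge, in the sentence immediately preceding the theorem, that the result ``can also be directly obtained from \cref{thm:facx1} by replacing $X$ with the variables $X,\omega X,\dots,\omega^{t-1}X$.'' You have carried out precisely that remark: since \cref{thm:facx1} is a Laurent-polynomial identity in $tn$ independent variables, specializing $Y=(X,\omega X,\dots,\omega^{t-1}X)$ is legitimate, and your multiset computation (using $\omega^{-j}=\omega^{t-j}$ so that $\bar Y$ runs over $\{\omega^i\x_k\}$, whence $(Y,\bar Y,1)$ and $(\tX,\omega\tX,\dots,\omega^{t-1}\tX,1)$ agree up to reordering) together with the symmetry of $s_\lambda$ finishes the job; note this also handles the vanishing cases for free, since the identity is unconditional. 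The paper's written proof instead follows the structural route you sketch at the end: it applies \cref{thm:schur-k} with $m=1$, $y=1$, uses \cref{lem:quo} and \cref{lem:eqqq} to express the quotients $\lambda^{(i)}$ in terms of the $\mu^{(k)}$, further factors the resulting self-dual Schur pieces via \cref{thm:facx} and \cref{thm:facx1}, and matches the outcome against \cref{thm:sympfact} and \cref{thm:evenfac-1} --- leaving the sign reconciliation you correctly identify as the delicate point to the reader. What the longer route buys is an explicit matching of the $t$-quotient data on both sides, which is the structure relevant to the Prasad--Wagh conjectures motivating this section; what your route buys is a complete and essentially computation-free proof. No gap.
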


\begin{proof}
Suppose $\core \mu t$ is not symplectic. In that case, both sides are zero by \cref{sp-gl-nonzero}.

Now suppose $\core \mu t$ is symplectic. Now 
using \cref{thm:schur-k} with $\tX$ replacing $X$, $m = 1$ and $y = 1$, we see that 
$s_{\lambda}(Y,\omega Y,\dots, \allowbreak \omega^{t-1}Y,1)$ is nonzero since {$\core \lambda t$} has at most one part with value at most $t-1$. Using \cref{lem:quo} and \cref{lem:eqqq}, 
we then obtain 
\begin{equation}
\label{schur-1}
\begin{split}
s_{\lambda}(\tX,\omega \tX,\dots,\omega^{t-1} \tX,1) = \sgn(\sigma_{\lambda}^{\core \lambda t})
    &   \sgn(\sigma_{\emptyset}^{\core \lambda t}) \,  s_{(\pm \, \mu^{(t-1)})_{2n+1}}(\tX^t,1)  \\
\times  \prod_{i=0}^{\floor{\frac{t-3}{2}}} 
\left( s_{\pi_i}(X^t,{\X}^t) \right)^2 
\times 
& \begin{cases}
1 &   t \text{ odd,}\\
s_{(\pm \, \mu^{(t/2-1)})_{2n}}(X^t,{\X}^t) 
&   t \text{ even,}
\end{cases}    
\end{split}
\end{equation}
where $\ds \pi_i=  
\left(\mu^{(i)}, -\mu^{(t-2-i)}\right)_{2n}$ 
for 
$0 \leq i \leq \floor{\frac{t-3}{2}}$.
We further factorize  
the Schur polynomials in the right hand side. 
Using \cref{thm:facx} for the self-dual representation of $\GL({2n})$ indexed by $\left(\pm \, \mu^{(t/2-1)}\right)_{2n}$, we have 
\[
s_{(\pm \, \mu^{(t/2-1)})_{2n}}(X^t,{\X}^t) = (-1)^{\sum_{i=1}^n \mu_i^{(t/2-1)}} \, \oo_{\mu^{(t/2-1)}}(X^t) \, \oo_{\mu^{(t/2-1)}}(-X^t), 
\]
and \cref{thm:facx1} for the self-dual representation of $\GL({2n+1})$ indexed by $\left(\pm \, \mu^{(t-1)}\right)_{2n+1}$, we have 
\[
s_{(\pm \, \mu^{(t-1)})_{2n+1}}(\tX^t,1)=  \sp_{\mu^{(t-1)}}(X^t) \oe_{\mu^{(t-1)}}(X^t,1).
\]
We therefore obtain, using \eqref{schur-1},
\begin{multline}
\label{schur-2-new}
 s_{\lambda}(\tX,\omega \tX,\dots,\omega^{t-1} \tX,1) =\sgn(\sigma_{\lambda}^{\core \lambda t})
      \sgn(\sigma_{\emptyset}^{\core \lambda t}) \, \sp_{\mu^{(t-1)}}(X^t) \, \oe_{\mu^{(t-1)}}(X^t,1) \\
 \prod_{i=0}^{\floor{{(t-3)}/{2}}} 
 \left( s_{{\pi_i}}(X^t,{\X}^t) \right)^2  
 \times 
 \begin{cases}
     1 & t \text{ odd,}\\
\oo_{\mu^{(t/2-1)}}(X^t) \, \oo_{\mu^{(t/2-1)}}(-X^t) &   t \text{ even.}
 \end{cases}      
\end{multline}

Now, 
use \cref{thm:sympfact} and \cref{thm:evenfac-1} to complete the proof. 
\end{proof}

This provides evidence to suggest that a further generalization of \cite[Conjecture 3]{prasad-wagh-2020} might be true.

\subsection{\texorpdfstring{$\OO({2tn+1})$}{TEXT} and \texorpdfstring{$\GL({2tn})$}{TEXT}}
\label{sec:oo-gl}

Along similar lines as in the previous section, we let $\mu \in \P_{tn}$ be a partition 
indexing a representation of $\OO({2tn+1})$, and construct
$\lambda=(\pm \mu)_{2tn}$.
Recall that $\core \lambda t$ is empty if and only if $n_i(\lambda)=2n$ for all $i$~\cite[{Corollary 3.12}]{ayyer-kumari-2022}.

\begin{lem}[{\cite[Corollary 3.3]{ayyer-kumari-2022}}] 
\label{cor:sel-parts}
The $\core \mu t$ is self-conjugate if and only if
\[
n_i(\mu)+n_{t-1-i}(\mu)=2n, \,\, 0 \leq i \leq t-1.
\]
\end{lem}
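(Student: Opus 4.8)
The plan is to translate both the self-conjugacy of $\core \mu t$ and the linear condition $n_i(\mu)+n_{t-1-i}(\mu)=2n$ into a single statement about the positions of beads on the $t$-runner abacus, where they become manifestly equivalent. The crucial simplifications are that $\core \mu t$ depends only on the vector $(n_0(\mu),\dots,n_{t-1}(\mu))$, and that $tn\equiv 0 \pmod t$, so residues are preserved under the shift relating the finite beta-set to the infinite Maya diagram.

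First I would record the structure of the $t$-core on the abacus. Represent $\mu$ by the beta-set $\beta(\mu)=\beta(\mu,tn)$ of $tn$ beads and distribute the beads on $t$ runners according to residues mod $t$, so that $n_r(\mu)$ is the number of beads on runner $r$. Since $\core \mu t$ is obtained by pushing all beads up on each runner, its beta-set occupies exactly the lowest $n_r(\mu)$ positions of runner $r$. Passing to the infinite Maya diagram $S(\lambda)=\{\lambda_i-i : i\geq 1\}\subset\mathbb{Z}$ of $\lambda:=\core \mu t$, the $t$-core property says that $S(\lambda)$ meets each residue class $r\pmod t$ in a down-closed set, so there is a top level $c_r\in\mathbb{Z}$ with $S(\lambda)\cap(r+t\mathbb{Z})=\{r+tk:k\leq c_r\}$. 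Because $tn\equiv 0\pmod t$, the shift by $tn$ identifying the finite beta-set with $S(\lambda)$ preserves residues, and counting the beads on runner $r$ lying in the length-$tn$ window yields the dictionary $c_r=n_r(\mu)-n-1$; summing over $r$ recovers $\sum_r n_r(\mu)=tn$ (equivalently $\sum_r c_r=-t$, the charge-zero normalization) as a consistency check.

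Next I would compute the conjugate on the abacus via the standard particle--hole duality $S(\lambda')=\{-1-s: s\notin S(\lambda)\}$. Applying this to a point $s=r+tk\notin S(\lambda)$, i.e. $k\geq c_r+1$, gives
\[
-1-s=(t-1-r)+t(-1-k),
\]
which lies on runner $t-1-r$ at level $-1-k$. As $k$ ranges over $\{c_r+1,c_r+2,\dots\}$ the image levels range over $\{m\leq -c_r-2\}$, so $\lambda'$ is again a $t$-core with top levels $c_{t-1-r}(\lambda')=-c_r(\lambda)-2$. Hence $\lambda=\core \mu t$ is self-conjugate if and only if $c_{t-1-r}=-c_r-2$ for every $r$, and substituting $c_r=n_r(\mu)-n-1$ converts this into $n_r(\mu)+n_{t-1-r}(\mu)=2n$, exactly the claimed condition. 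Since each step is an equivalence, both implications follow simultaneously. The main obstacle is purely bookkeeping: one must pin down the reflection of runners $r\mapsto t-1-r$ and the exact additive constant in $c_r=n_r(\mu)-n-1$, which rely on the divisibility $tn\equiv 0\pmod t$ and on tracking the charge-zero normalization through the particle--hole involution; the only real risk is an off-by-one error at the top bead of each runner or at the boundary of the $tn$-bead window.
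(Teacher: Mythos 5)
Your argument is correct: the dictionary $c_r=n_r(\mu)-n-1$ between runner populations and the top levels of the Maya diagram of $\core \mu t$, and the particle--hole computation giving $c_{t-1-r}(\lambda')=-c_r(\lambda)-2$, both check out (including the normalization $\sum_r c_r=-t$), and together they yield exactly $n_r(\mu)+n_{t-1-r}(\mu)=2n$. The paper does not prove this lemma but imports it from \cite[Corollary 3.3]{ayyer-kumari-2022}, where it is likewise established through beta-set/abacus combinatorics, so your proof is a correct self-contained version of essentially the same approach.
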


\begin{lem}
With $\mu$ and $\lambda$ as above, 
$s_{\lambda}(\tX,\omega \tX,\dots,\omega^{t-1} \tX)$ is nonzero if and only if\\
$\oo_{\mu}(X,\omega X, \dots ,\omega^{t-1}X)$ is nonzero.
\end{lem}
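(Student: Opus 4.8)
The plan is to mirror the structure of the proof of \cref{sp-gl-nonzero} (that is, \cite[Lemma 4.2]{ayyer-kumari-2022}), since the setup here is completely parallel: we build $\lambda = (\pm\mu)_{2tn}$ from $\mu \in \P_{tn}$ and ask when $s_\lambda(\tX,\omega\tX,\dots,\omega^{t-1}\tX)$ is nonzero. By \cref{thm:schur-fac}, the Schur polynomial $s_\lambda$ evaluated at the $t$-fold twisted variables $(\tX,\omega\tX,\dots,\omega^{t-1}\tX)$ is nonzero if and only if $\core\lambda t$ is empty, because here $\lambda \in \P_{2tn}$ and there are no extra untwisted variables (contrast with the $\Sp$ case, which had a trailing $1$ and hence allowed a one-part core via \cref{thm:schur-k} with $m=1$). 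On the other side, by \cref{thm:oorthfact}, the universal character $\uoo_\mu(\uX,\omega\uX,\dots,\omega^{t-1}\uX)$ — and hence $\oo_\mu(X,\omega X,\dots,\omega^{t-1}X)$ after specializing $\uX \mapsto (X,\X,0,0,\dots)$ via \eqref{uni-class} — is nonzero if and only if $\core\mu t$ is self-conjugate. So the whole lemma reduces to the purely combinatorial claim that $\core\lambda t$ is empty if and only if $\core\mu t$ is self-conjugate.

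First I would compute $\beta(\lambda)$ explicitly, exactly as in \cref{lem:quo}. Since $\lambda = (\pm\mu)_{2tn} = \mu_1 + (\mu, 0,\dots,0,-\rev(\mu))$ with the total length $2tn$, the first $tn$ parts of $\beta(\lambda)$ have the form $\mu_1 + tn + \beta_i(\mu)$ and the last $tn$ parts have the form $\mu_1 + tn - 1 - \rev(\beta(\mu))_i$ (there is no isolated middle part here, unlike the odd-length $\Sp$ case, because the length is even and there is no central $\mu_1$). I would then invoke the recalled fact that $\core\lambda t$ is empty exactly when $n_i(\lambda) = 2n$ for all $i \in \{0,\dots,t-1\}$ (\cite[Corollary 3.2]{ayyer-kumari-2022}), together with \cref{cor:sel-parts}, which says $\core\mu t$ is self-conjugate exactly when $n_i(\mu) + n_{t-1-i}(\mu) = 2n$ for all $i$.

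The key step is to track how the residues modulo $t$ behave under the two transformations applied to $\beta(\mu)$. A part $\beta_i(\mu) \equiv a \pmod t$ contributes, in the first block, a part $\mu_1 + tn + \beta_i(\mu) \equiv \mu_1 + a \pmod t$; in the reversed-and-negated block it contributes a part $\equiv \mu_1 - 1 - a \pmod t$. Writing $c \equiv \mu_1 \pmod t$, the first block has $n_{c+a}(\lambda) \supseteq$ the $n_a(\mu)$ parts of residue $c+a$, and the second block has $n_{c-1-a}(\lambda) \supseteq$ the $n_a(\mu)$ parts of residue $c-1-a$. Pairing residue $j$ of $\lambda$ coming from the first block (so $j = c+a$, i.e. $a = j-c$) against the second-block residue from $a' = t-1-a - \text{(offset)}$, one finds that $n_j(\lambda)$ is a sum of one $n_a(\mu)$-term and one $n_{t-1-a}(\mu)$-term for the appropriate $a$; hence $n_j(\lambda) = 2n$ for all $j$ is equivalent to $n_a(\mu) + n_{t-1-a}(\mu) = 2n$ for all $a$, which is precisely the self-conjugacy criterion of \cref{cor:sel-parts}.

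The main obstacle, and the one place demanding care, is this residue bookkeeping: I must pin down the exact shift $c \equiv \mu_1$ and verify that reversing and negating $\beta(\mu)$ sends residue class $a$ to class $c-1-a$ (not $c-a$ or $-1-a$) — the $-1$ arises from the $\mu_1 + tn - 1$ offset on the second block — and then confirm that as $j$ ranges over $\{0,\dots,t-1\}$ the pairing $(j \mapsto \text{complementary residue})$ is a bijection matching the involution $a \leftrightarrow t-1-a$ in \cref{cor:sel-parts}. Once this identification of multiplicities is established, the equivalence ``$n_j(\lambda)=2n\ \forall j \iff \core\mu t$ self-conjugate'' is immediate, and combining it with \cref{thm:schur-fac} and \cref{thm:oorthfact} completes the proof.
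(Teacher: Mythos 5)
Your proposal is correct and follows essentially the same route as the paper: both reduce the lemma, via \cref{thm:schur-fac} (or \cref{thm:schur-k} with $m=0$) and \cref{thm:oorthfact}, to showing $\core{\lambda}{t}=\emptyset$ if and only if $\core{\mu}{t}$ is self-conjugate, and both establish this by writing $\beta(\lambda)$ as the two blocks $\mu_1+tn+\beta(\mu)$ and $\mu_1+tn-1-\rev(\beta(\mu))$ and deriving the multiplicity identity $n_{(p+i)\bmod t}(\lambda)=n_i(\mu)+n_{t-1-i}(\mu)$ with $p\equiv\mu_1\pmod t$, which combined with \cref{cor:sel-parts} finishes the argument. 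The residue bookkeeping you flag as the delicate point is exactly the paper's equation \eqref{nmu1}, and your shift $a\mapsto c-1-a$ for the second block is the correct one.
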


\begin{proof}
We prove the result by showing that $\core \lambda t$ is empty if and only if $\core \mu t$ is self-conjugate. We have 
\begin{multline*}
\beta(\lambda)=(2\mu_1+2tn-1,\mu_1+\mu_2+2tn-2,\dots,\mu_1+\mu_{tn}+tn,\\
\mu_1-\mu_{tn}+tn-1,\dots,\mu_1-\mu_2+1,0),     
\end{multline*}
where the first $tn$ parts are of the form $\mu_1+tn+\beta(\mu)$ and the last $tn$ parts are of the form $\mu_1+tn-1-\rev(\beta(\mu))$. 
Suppose $p \equiv \mu_1 \pmod{t}$ for some $p \in 
\{0, \dots, t-1\}$. 
Then 
\[
\beta(\lambda)_i+\beta(\lambda)_{2tn+1-i} \equiv 2p-1 \pmod{t}, \,\, i \in \{1,2,\dots,tn\}.
\]
So for all $j \in \{0, \dots, t-1\} $, the parts of $\beta(\lambda)$ less than $\mu_1+tn$ congruent to $j$ are in bijective correspondence with parts at least $\mu_1+tn$ congruent to $2p-1-j$.
Hence the number of parts of $\beta(\lambda)$ at least $\mu_1+tn$ congruent to $j$ and $2p-1-j$ is $n_j(\lambda)$. 

If $\zeta \equiv i \pmod{t}$ occurs in $\beta(\mu)$, then $\mu_1+tn+\zeta \equiv p+i \pmod{t}$, which is at least $\mu_1+tn$, occurs in $\beta(\lambda)$. 
For $j=i_p \coloneqq (p+i) \pmod{t}$, we see that 
\begin{equation}
\label{nmu1}
 n_i(\mu)+n_{t-1-i}(\mu)= n_{i_p}(\lambda), \quad i \in \{0, \dots, t-1\} .  
\end{equation}

First, suppose $\core \lambda t$ is empty. Then $n_i(\lambda)=2n$ for $i \in \{0, \dots, t-1\} $. Substitution of this in \eqref{nmu1} gives, $n_i(\mu)+n_{t-1-i}(\mu)=2n$ for $i \in \{0, \dots, t-1\} $. 
From \cref{cor:sel-parts}, it now follows that $\core \mu t$ is self-conjugate.

Now assume $\core \mu t$ is self-conjugate. Using \cref{cor:sel-parts} in \eqref{nmu1}, we see that $n_i(\lambda)=2n$. This shows that $\core \lambda t$ is empty.
\end{proof}

\begin{lem}
\label{lem:quoeq}
With $\mu$ and $\lambda$ as above and $0 \leq i \leq t-1$, we have 
\[
    \lambda^{(i)}= 
\left(\mu^{(i-p)}, -\mu^{(p-i-1)}\right)_{2n},
    \]
where $p \equiv \mu_1 \pmod{t}$. 
\end{lem}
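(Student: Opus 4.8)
The plan is to mirror the beta-set computation already carried out for $\lambda=(\pm\mu)_{2tn}$ in the proof of the preceding lemma, and then to sort the parts of $\beta(\lambda)$ by their residue modulo $t$ so as to read off each component of the $t$-quotient, exactly as was done in the odd-size case in \cref{lem:quo}. Concretely, I would begin from the description recalled there: the first $tn$ parts of $\beta(\lambda)$ are $\mu_1+tn+\beta(\mu)$ and the last $tn$ parts are $\mu_1+tn-1-\rev(\beta(\mu))$, where $\beta(\mu)=\beta(\mu,tn)$.

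First I would track residues. Since $tn\equiv 0$ and $\mu_1\equiv p\pmod t$, a part $\beta_j(\mu)\equiv i-p\pmod t$ contributes, through the first block, a part of $\beta(\lambda)$ congruent to $i$, while a part $\beta_j(\mu)\equiv p-i-1\pmod t$ contributes, through the second block (after the reversal and the subtraction from $\mu_1+tn-1$), a part congruent to $i$. Writing $\beta^{(k)}(\mu)$ for the decreasing list of parts of $\beta(\mu)$ congruent to $k\pmod t$ (indices read modulo $t$), collecting these gives
\[
\beta^{(i)}(\lambda)=\left(\mu_1+tn+\beta^{(i-p)}(\mu),\ \mu_1+tn-1-\rev\big(\beta^{(p-i-1)}(\mu)\big)\right).
\]

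Next I would count and identify the shape. The number of parts above is $n_{i-p}(\mu)+n_{p-i-1}(\mu)$; since $(i-p)+(p-i-1)\equiv t-1\pmod t$ and we are in the regime where $\core{\mu}{t}$ is self-conjugate, \cref{cor:sel-parts} makes this count equal to $2n$, consistent with $\lambda^{(i)}\in\P_{2n}$. The displayed beta-set is exactly the one produced by the $(\,\cdot\,,-\,\cdot\,)_{2n}$ construction applied to $\mu^{(i-p)}$ and $\mu^{(p-i-1)}$: the large block $\mu_1+tn+\beta^{(i-p)}(\mu)$ yields the positive part $\mu^{(i-p)}$, while the small block $\mu_1+tn-1-\rev(\beta^{(p-i-1)}(\mu))$ yields the reversed-and-negated part $-\mu^{(p-i-1)}$. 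Dividing out $t$ from $\beta^{(i)}(\lambda)$ and applying Macdonald's quotient formula $\lambda^{(i)}_j=\tilde\beta^{(i)}_j-n_i(\lambda)+j$ should then deliver $\lambda^{(i)}=(\mu^{(i-p)},-\mu^{(p-i-1)})_{2n}$.

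The main obstacle I anticipate is the offset bookkeeping in this last identification. One must verify that the large block contributes $\mu^{(i-p)}$ with no index shift, in contrast to the $\mu^{(i-p-1)}$ that appeared in \cref{lem:quo}; the discrepancy is precisely the absence here of the extra central part $\mu_1$ that the odd-size construction $(\pm\mu)_{2tn+1}$ carried. One must also check that the reversal together with the $-1$ in $\mu_1+tn-1$ correctly produces the $-\rev$ half of the $(\,\cdot\,,-\,\cdot\,)_{2n}$ partition. All the modular arithmetic and the self-conjugacy count feed into aligning these two blocks with the two halves of the beta-set of $(\mu^{(i-p)},-\mu^{(p-i-1)})_{2n}$.
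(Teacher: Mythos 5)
Your proposal is correct and follows exactly the route the paper intends: it mirrors the beta-set computation from the proof of \cref{lem:quo}, using that the first $tn$ parts of $\beta(\lambda)$ are $\mu_1+tn+\beta(\mu)$ and the last $tn$ are $\mu_1+tn-1-\rev(\beta(\mu))$, sorting by residue mod $t$ to get $\beta^{(i)}(\lambda)=\bigl(\mu_1+tn+\beta^{(i-p)}(\mu),\ \mu_1+tn-1-\rev(\beta^{(p-i-1)}(\mu))\bigr)$, and invoking \cref{cor:sel-parts} to match the $2n$ in the subscript. The paper itself omits the details, stating only that the argument proceeds along the same lines as \cref{lem:quo}, and your index bookkeeping (in particular the shift from $\mu^{(i-p-1)}$ to $\mu^{(i-p)}$ due to the missing central part) is exactly right.
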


The proof of \cref{lem:quoeq} proceeds along very similar lines as the proof of \cref{lem:quo}, {and we leave it to the interested reader}.

We now consider the nonzero characters and prove the following factorization result, which can also be directly obtained from \cref{thm:facx} by replacing $X$ with the variables $X,\omega X, \dots ,\omega^{t-1}X$.

\begin{thm}
\label{thm:oo-gl}
Suppose $\mu \in \P_{tn}$ and $\lambda=(\pm \mu)_{2tn}$. Then 
\[
s_{\lambda}(\tX,\omega \tX,\dots,\omega^{t-1} \tX)=(-1)^{|\mu|}
\oo_{\mu}(X,\omega X, \dots ,\omega^{t-1}X) \;
\oo_{\mu}(-X,-\omega X, \dots ,-\omega^{t-1}X).
\]
\end{thm}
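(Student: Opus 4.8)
The plan is to take the direct route flagged just before the statement: apply \cref{thm:facx} not to $X$ but to the enlarged alphabet. Concretely, I would set $Y = (X, \omega X, \dots, \omega^{t-1}X)$, an alphabet of $tn$ variables, write $\mu = (\mu_1, \dots, \mu_{tn}) \in \P_{tn}$ (padding with zeros), and observe that $(\pm\mu)_{2tn} = (\mu,-\mu)_{2tn} = \lambda$. Then \cref{thm:facx}, applied to the partition $\mu$ and the alphabet $Y$, reads
\[
s_{\lambda}(Y,\overline{Y}) = (-1)^{|\mu|}\,\oo_\mu(Y)\,\oo_\mu(-Y).
\]
The right-hand side is already in the desired form, since $\oo_\mu(Y) = \oo_\mu(X, \omega X, \dots, \omega^{t-1}X)$ and $\oo_\mu(-Y) = \oo_\mu(-X, -\omega X, \dots, -\omega^{t-1}X)$ by definition.

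The only point needing care is identifying the argument on the left. Because $\overline{\omega^k x_i} = \omega^{-k}\overline{x}_i = \omega^{t-k}\overline{x}_i$ and $\{\omega^{t-k} : 0 \le k \le t-1\} = \{\omega^{k} : 0 \le k \le t-1\}$, the alphabet $\overline{Y}$ equals the multiset $\{\omega^{k}\overline{x}_i\}$, so $(Y,\overline{Y})$ is exactly the multiset $(\tX, \omega\tX, \dots, \omega^{t-1}\tX)$ with $\tX = (X,\X)$. Since $s_{\lambda}$ is a symmetric polynomial, its value depends only on this multiset, whence $s_{\lambda}(Y,\overline{Y}) = s_{\lambda}(\tX, \omega\tX, \dots, \omega^{t-1}\tX)$ and the theorem follows. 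The ``obstacle'' here is genuinely mild: one need only check that the reciprocals of the $\omega$-rotated variables close up into the same rotation orbit.

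If instead one prefers a proof parallel to that of \cref{thm:sp-gl}, exploiting the machinery developed in this subsection, I would proceed as follows. First settle the vanishing case: the preceding lemma equates nonvanishing of the left side with nonvanishing of $\oo_\mu(X, \omega X, \dots, \omega^{t-1}X)$, and the classical specialization of \cref{thm:oorthfact} makes the latter nonzero exactly when $\core\mu t$ is self-conjugate, so both sides vanish otherwise. In the nonvanishing case $\core\lambda t = \emptyset$, and I would expand the left side by \cref{thm:schur-fac} with $\tX$ in the role of $X$, obtaining $\prod_{i=0}^{t-1} s_{\lambda^{(i)}}(X^t,\X^t)$ up to a sign. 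Substituting the quotients from \cref{lem:quoeq}, reindexing, and pairing the factor indexed by $a$ with the one indexed by $t-1-a$ via \cref{lem:eqqq} (whose hypothesis $n_a(\mu)+n_{t-1-a}(\mu)=2n$ is precisely \cref{cor:sel-parts}) converts the product into squares, leaving a lone self-dual middle factor $s_{(\pm\mu^{((t-1)/2)})_{2n}}(X^t,\X^t)$ when $t$ is odd, which is split by a further application of \cref{thm:facx}. Matching the outcome against the classical factorizations of $\oo_\mu(X,\omega X,\dots)$ and of $\oo_\mu(-X,-\omega X,\dots)$ (using $(-X)^t = (-1)^t X^t$ to relate the two) then produces the claim. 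In this longer route the main obstacle is purely the sign bookkeeping: reconciling the power of $-1$ and the various $\sgn(\sigma)$ factors arising from \cref{thm:schur-fac}, \cref{lem:quoeq}, and the two copies of the orthogonal factorization into the single clean constant $(-1)^{|\mu|}$.
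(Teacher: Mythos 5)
Your primary argument is correct, and it is the short route that the paper itself flags in the sentence preceding the theorem (``which can also be directly obtained from \cref{thm:facx} by replacing $X$ with the variables $X,\omega X,\dots,\omega^{t-1}X$'') but does not write out: restate \cref{thm:facx} with $tn$ variables $Y=(X,\omega X,\dots,\omega^{t-1}X)$, and observe that $(Y,\overline{Y})$ coincides as a multiset with $(\tX,\omega\tX,\dots,\omega^{t-1}\tX)$ because $\overline{\omega^{k}x_i}=\omega^{t-k}\x_i$ and the exponents $t-k$ run over a full residue system mod $t$; symmetry of $s_\lambda$ then finishes the identification. This is a complete proof, and it has the mild advantage of covering the vanishing and nonvanishing cases uniformly, since \cref{thm:facx} is an unconditional polynomial identity. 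The paper's written proof is instead your second route: it invokes \cref{thm:schur-k} (with $m=0$) to expand the left side over the $t$-quotient of $\lambda$, substitutes the quotients computed in \cref{lem:quoeq}, pairs factors into squares via \cref{lem:eqqq}, splits the middle self-dual factor for odd $t$ by \cref{thm:facx}, and matches the result against \cref{thm:oorthfact}; what that longer route buys is an explicit term-by-term match between the quotient factorizations of the two sides, which is the content relevant to the Prasad--Wagh conjectures this section is meant to support. Your sketch of that second route is faithful to the paper's proof, and you correctly identify its only real burden as the reconciliation of the signs $(-1)^{\epsilon}$ and the various $\sgn(\sigma)$ factors into the single constant $(-1)^{|\mu|}$ --- a step the paper itself leaves largely implicit.
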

\begin{proof}
Using \cref{thm:schur-k} with $\tX$ replacing $X$, $m = 0$, we see that 
$s_{\lambda}(\tX,\omega \tX,\dots,\omega^{t-1} \tX)$ is nonzero since {$\core \lambda t$} is empty. Using \cref{lem:quoeq} and \cref{lem:eqqq}, 
\begin{equation}
\label{schur-2}
\begin{split}
s_{\lambda}(\tX,\omega \tX,\dots,\omega^{t-1} \tX)=
\sgn(\sigma_{\lambda})  \sgn(\sigma_{\emptyset})\, &
\prod_{i=0}^{\floor{{(t-2)}/{2}}} \left( s_{(\mu^{(i)},-\mu^{(t-1-i)})_{2n}}(X^t,{\X}^t)\right)^2 \\
\times &
\begin{cases}
    s_{  (\pm \, \mu^{({(t-1)}/{2})})_{2n}}(X^t,\X^t) & t \text{ odd,}\\
    1 & t \text{ even.}
\end{cases}
\end{split}
\end{equation}
We can further factorize  
the Schur polynomials on the right hand side
in the following way. Using \cref{thm:facx}, 
we have
\[
s_{(\pm \, \mu^{({(t-1)}/{2})})_{2n}}(X^t,\X^t)
= (-1)^{|{\mu^{((t-1)/2)}}|} \oo_{\mu^{({(t-1)}/{2})}}(X^t) \oo_{\mu^{({(t-1)}/{2})}}(-X^t). 
\]
So, by \eqref{schur-2}, we get 
\begin{equation*}
\begin{split}
s_{\lambda}(\tX,\omega \tX,\dots,\omega^{t-1} \tX)=
\sgn(\sigma_{\lambda}) & \sgn(\sigma_{\emptyset})  
\prod_{i=0}^{\floor{{(t-2)}/{2}}} \left( s_{(\mu^{(i)},-\mu^{(t-1-i)})_{2n}}(X^t,{\X}^t)\right)^2 \\
\times &
\begin{cases}
 {(-1)^{|{\mu^{((t-1)/2)}}|}} \oo_{\mu^{({(t-1)}/{2})}}(X^t) 
\oo_{\mu^{({(t-1)}/{2})}}(-X^t)  & t \text{ odd,}\\
    1 & t \text{ even.}
\end{cases}
\end{split}
\end{equation*}
Now, using \cref{thm:oorthfact}, we have that 
\begin{equation*}
    \begin{split}
        \oo_{\mu}(X,\omega X,& \dots ,\omega^{t-1}X) \;
\oo_{\mu}(-X,-\omega X, \dots ,-\omega^{t-1}X) =  \\
(-1)^{\epsilon} & \prod_{i=0}^{\floor{{(t-2)}/{2}}} \left( s_{(\mu^{(i)},-\mu^{(t-1-i)})_{2n}}(X^t,{\X}^t)\right)^2 \times
\begin{cases}
   \oo_{\mu^{({(t-1)}/{2})}}(X^t) 
\oo_{\mu^{({(t-1)}/{2})}}(-X^t)  & t \text{ odd,}\\
    1 & t \text{ even.}
\end{cases}
    \end{split}
\end{equation*}
where 
\[
\epsilon=\sum_{i=0}^{\floor{{(t-2)}/{2}}} (|(\mu^{(i)}|+|\mu^{(t-1-i)}|).
\]
This completes the proof. 
\end{proof}

This provides evidence to suggest that a further generalization of \cite[Conjecture 1]{prasad-wagh-2020} might be true.

\subsection{\texorpdfstring{$\OE({2tn+2})$}{TEXT} and \texorpdfstring{$\Sp({2tn})$}{TEXT}}
\label{sec:oe-sp}

The third pair is the symplectic group $\Sp({2tn})$ and the even orthogonal group 
$\OE({2tn+2})$.
Suppose $\mu \in \P_{tn}$ is a partition 
indexing a representation of $\Sp({2tn})$, and construct
$\lambda=\mu_{tn}+(\mu,0)$, which has length at most $tn+1$,
then $\lambda$ indexes a representation of $\OE({2tn+2})$. 

Unfortunately, the story for this pair is not the same as that of the previous subsections.
Suppose $\mu_{tn} \neq 0$ and $\sp_{\mu}(X,\omega X, \dots,\omega^{t-1}X)$ is nonzero. Then that does not imply that
$\oe_{\lambda}(X, \omega X, \dots, \omega^{t-1}X, 1)$ is nonzero. Consider the following example.

\begin{eg}
Assume $t=5$, $n=2$. If   $\mu=(9,8,6,5,4,4,3,2,2,1)=(8,6,3,1|9,7,4,2)$, a symplectic $5$-core of length at most $10$, then $\lambda=\mu_{tn}+(\mu,0)=(10,9,7,6,5,5,4,3,3,2,1)$ of length $11$. 
Here $\sp_{\mu}(X,\omega X, \dots,\omega^{t-1}X)$ is nonzero but $\oe_{\lambda}(X,\omega X, \dots ,\omega^{t-1}X,1)$ is zero.
\end{eg}

But there is one implication that is true.
If $\mu_{tn}=0$, then $\lambda$ is the same as $\mu$ and we have the following lemma.

\begin{lem}
\label{o-sp}
Let $\mu$ be a partition in $\P_{tn}$ such that
$\sp_{\mu}(X,\omega X, \dots,\omega^{t-1}X)$ is nonzero. Then
$\oe_{\mu}(X,\omega X,\dots,\omega^{t-1}X,1)$ is nonzero. 
\end{lem}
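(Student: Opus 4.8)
The plan is to deduce the lemma from the explicit factorization in \cref{thm:evenfac-1}, after first recognizing that the hypothesis is exactly the condition under which that theorem applies. I would begin by translating the hypothesis. Specializing $\uX \mapsto (X, \X, 0, 0, \dots)$ in \cref{thm:sympfact} recovers $\sp_\mu(X, \omega X, \dots, \omega^{t-1}X)$ by \eqref{uni-class}, so the vanishing criterion there gives the easy implication I need: if $\core\mu t$ were not symplectic, then $\usp_\mu(\uX, \omega\uX, \dots, \omega^{t-1}\uX)$ would vanish identically and hence so would its specialization $\sp_\mu(X, \omega X, \dots, \omega^{t-1}X)$. Thus the hypothesis forces $\core\mu t$ to be symplectic.

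Since $\mu \in \P_{tn} \subseteq \P_{tn+1}$ and $\core\mu t$ is symplectic, \cref{thm:evenfac-1} applies with $\lambda = \mu$ and expresses $\oe_\mu(X, \omega X, \dots, \omega^{t-1}X, 1)$ as $(-1)^{\epsilon}\sgn(\sigma_\mu)$ times a product of factors of three shapes: the even orthogonal factor $\oe_{\mu^{(0)}}(X^t, 1)$, the Schur factors $s_{(\mu^{(i)}, -\mu^{(t-i)})_{2n}}(X^t, \X^t)$ for $1 \le i \le \floor{(t-1)/2}$, and, when $t$ is even, the odd orthogonal factor $\oo_{\mu^{(t/2)}}(-X^t)$. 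Because $\C[x_1^{\pm 1}, \dots, x_n^{\pm 1}]$ is an integral domain, it then suffices to show that each factor is a nonzero Laurent polynomial.

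The factorwise nonvanishing is the only real content, and I would argue it as follows. The factor $\oo_{\mu^{(t/2)}}(-X^t)$ is a nonzero character precomposed with the monomial substitution $x_i \mapsto -x_i^t$, which is injective on the exponent lattice and so preserves nonvanishing. For the even orthogonal factor I would invoke \cref{thm:facx1} with the variables $X^t$ and $\nu = \mu^{(0)}$: it gives $s_{(\pm \nu)_{2n+1}}(X^t, \X^t, 1) = \sp_\nu(X^t)\, \oe_\nu(X^t, 1)$, and since the left-hand side and $\sp_\nu(X^t)$ are both nonzero, so is $\oe_{\mu^{(0)}}(X^t, 1)$. For the Schur factors $s_\kappa(X^t, \X^t)$ I would use the bialternant formula: the denominator is the Vandermonde in the distinct Laurent monomials $x_1^t, \dots, x_n^t, x_1^{-t}, \dots, x_n^{-t}$, while the numerator is a generalized Vandermonde $\det(y_j^{\kappa_i + 2n - i})$ whose exponents are distinct; evaluating at distinct positive reals $x_1, \dots, x_n$ with $x_i \neq 1$ and $x_i \neq x_j^{-1}$ makes the nodes distinct positive reals, where such a determinant is nonzero because the relevant powers form a Chebyshev system.

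The hard part is precisely this last nonvanishing check for $s_\kappa(X^t, \X^t)$: for the genuine characters the reciprocal structure of the arguments is harmless, but for a general (non-self-dual) shape $\kappa = (\mu^{(i)}, -\mu^{(t-i)})_{2n}$ the reciprocal relation among the variables could a priori force monomial cancellation, so one genuinely needs a positivity or Chebyshev-system argument rather than a one-line substitution principle.
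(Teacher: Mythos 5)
Your argument follows the paper's proof exactly: the hypothesis forces $\core \mu t$ to be symplectic via \cref{thm:sympfact} (through \eqref{uni-class}), and then \cref{thm:evenfac-1} yields the nonvanishing. The paper simply cites \cref{thm:evenfac-1} for the conclusion without checking that the factors in \eqref{even--1} are individually nonzero, so your factorwise verification is a welcome filling-in of an implicit step --- though for the Schur factors you can avoid the Chebyshev-system argument entirely, since a nonzero polynomial with nonnegative coefficients is positive at any tuple of positive reals.
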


\begin{proof}
$\sp_{\mu}(X,\omega X, \dots,\omega^{t-1}X)$ is nonzero if and only if $\core \mu t$ is symplectic.
By \cref{thm:evenfac-1}, 
if $\core \mu t$ is symplectic, then $\oe_{\mu}(X,\omega X,\dots,\omega^{t-1}X,1)$ is nonzero. This completes the proof. 
\end{proof}

\begin{rem}
The converse of \cref{o-sp} is not true. 
We note that $\sp_{(1)}(X,-X) = 0$ but $\oe_{(1)}(X,-X,1)$ is non-zero.
Some more examples of $3$-cores $\mu$
for which $\sp_{\mu}(x_1,\omega x_1,\omega^{2}x_1)$ is zero but $\oe_{\mu}(x_1,\omega x_1,\omega^{2}x_1,1)$ is non-zero are $(1),(2)$ and $(4,2)$, {where $\omega$ is a third root of unity}.
\end{rem}

\begin{thm} Suppose $\mu \in \P_{tn}$  
and $\core \mu t$ is symplectic of rank $r$. Then  
\begin{equation}
\begin{split}
\sp_{\mu}(X,\omega X, \dots ,\omega^{t-1}X) =  (-1)^{\epsilon} \sgn(\sigma_{\mu}) \, \sp_{\mu^{(t-1)}}(X^t) \prod_{i=0}^{{(t-3)}/{2}} & s_{(\mu^{(i)}, -\mu^{(t-2-i)})_{2n}}(X^t,{\X}^t)\\
\times & {\begin{cases}
    \oo_{\mu^{((t-2)/2)}}(X^t) & t \text{ even},\\
    1 & t \text{ odd,}
\end{cases}}
\end{split}
\end{equation}
where 
\[
\epsilon= \ds -\sum_{i=\floor{{t}/{2}}}^{t-2} 
\binom{n_{i}(\mu,tn)+1}2 +
\begin{cases}
\frac{n(n+1)}{2}+nr & t \text{ even},\\
0 & t \text{ odd,}
\end{cases}
\]
and
{\begin{equation}
\begin{split}
\oe_{\mu}(X,\omega X, \dots ,\omega^{t-1}X,1) =  (-1)^{\epsilon'} \sgn(\sigma_{\mu}) \,
 &\oe_{\mu^{(t-1)}}(X^t,1) 
 \prod_{i=0}^{{(t-3)}/{2}}  s_{(\mu^{(i)}, -\mu^{(t-2-i)})_{2n}}(X^t,{\X}^t)\\
\times & {\begin{cases}
   (-1)^{|\mu^{((t-2)/2)}|} \oo_{\mu^{((t-2)/2)}}(-X^t) & t \text{ even},\\
    1 & t \text{ odd,}
\end{cases}}
\end{split}
\end{equation}}
where 
\[
\epsilon' =
\ds \sum_{i=\floor{{t}/{2}}}^{t-2}  \Bigg( \binom{n_i(\mu,tn)}{2}+ (t-1)n(n_i(\mu,tn)-n) \Bigg).
\]
\end{thm}

\begin{proof}
The first factorization here uses \cref{thm:sympfact}. The second comes from \cref{thm:evenfac-1}, where we consider $\mu \in \P_{tn}$. Note that $n_{i}(\mu,tn)=n_{i+1}(\mu,tn+1)$, $0 \leq i \leq t-2$, $n_{t-1}(\mu,tn)=n_{0}(\mu,tn+1)-1$.
\end{proof}

The results in this section suggest that a further generalization of \cite[Conjecture 4]{prasad-wagh-2020} might be false.

\section{Independence of twisted characters of variables}
\label{sec:staircase}

Recall that $t \geq 2$ and $\omega$ is a primitive $t$'th root of unity. Suppose $X=(x_1,\dots,x_n)$ as before, and $Y=(y_1,\dots,y_m)$. Recall the definition of skew Schur polynomial given in \eqref{def:SSkew-Jcob}.
Then by~\cite[Chapter I.5, Equation (5.9)]{macdonald-2015},  
\begin{equation}
\label{split-schur}
s_{\lambda}(X,Y) = \sum_{\mu \subset \lambda} s_{{\mu}}(X) s_{\lambda/\mu}(Y).
\end{equation}
By~\cite[Equation 2.1]{koike-terada-1990}, we have  
\begin{equation}
\label{split-ortho}
    \uoe_{\lambda}(X,Y) = \sum_{\mu \subset \lambda} \uoe_{\mu}(X) s_{\lambda/\mu}(Y).
\end{equation}
Lastly by~\cite[Page 117]{koike-terada-1990}, we have  
\begin{equation}
\label{split-symp}
    {\usp}_{\lambda}(X,Y) = \sum_{\mu \subset \lambda} {\usp}_{\mu}(X) s_{\lambda/\mu}(Y).
\end{equation}

We first recall the factorization of the skew Schur polynomial twisted by roots of unity.

\begin{lem}
\label{lem:non-zero}
    Let $\lambda$ and $\mu$ be partitions in $\P_{tm}$.
    Then the specialized skew Schur polynomial $s_{\lambda/\mu}(Y,\omega Y,\dots, \omega^{t-1} Y)$ is given as follows.
\begin{enumerate}
    \item If $\core \lambda t \neq \core \mu t$,
    then 
    \[
    s_{\lambda/\mu}(Y,\omega Y,\dots, \omega^{t-1} Y)=0.
    \]
    \item If $\core \lambda t = \core \mu t$,
    then
    \begin{equation*}
    s_{\lambda/\mu}(Y,\omega Y,\dots, \omega^{t-1} Y)
        = \sgn(\sigma_{\lambda})
        \sgn(\sigma_{\mu})
        \prod_{i=0}^{t-1} s_{\lambda^{(i)}/\mu^{(i)}}(Y^t).
    \end{equation*}
\end{enumerate} 
\end{lem}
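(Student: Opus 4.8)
The plan is to reduce everything to the behaviour of the complete homogeneous symmetric polynomials under the root-of-unity twist and then read off the result from the Jacobi--Trudi determinant \eqref{def:SSkew-Jcob}. The crucial preliminary observation is that, for the $tm$ variables $(Y,\omega Y,\dots,\omega^{t-1}Y)$, the generating function \eqref{genfn-h} factors as
\[
\sum_{k\ge 0} h_k(Y,\omega Y,\dots,\omega^{t-1}Y)\,q^k
= \prod_{i=1}^{m}\prod_{j=0}^{t-1}\frac{1}{1-\omega^j y_i q}
= \prod_{i=1}^{m}\frac{1}{1-y_i^t q^t},
\]
where the last step uses $\prod_{j=0}^{t-1}(1-\omega^j z)=1-z^t$. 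Comparing with \eqref{genfn-h} applied to $Y^t$ shows that $h_k(Y,\omega Y,\dots,\omega^{t-1}Y)=h_{k/t}(Y^t)$ when $t\mid k$ and vanishes otherwise.

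Applying this to the $(i,j)$ entry $h_{\lambda_i-\mu_j-i+j}(Y,\omega Y,\dots,\omega^{t-1}Y)$ of the Jacobi--Trudi matrix, and writing $\lambda_i-\mu_j-i+j=\beta_i(\lambda)-\beta_j(\mu)$ in terms of beta-sets (both computed in $\P_{tm}$), I see that this entry is nonzero only when $\beta_i(\lambda)\equiv\beta_j(\mu)\pmod t$. The idea is then to permute the rows by $\sigma_\lambda$ and the columns by $\sigma_\mu$ so that both rows and columns are grouped by residue class mod $t$, in the common order $0,1,\dots,t-1$ dictated by \eqref{sigma-perm}. After this rearrangement the matrix becomes block-structured: the only potentially nonzero blocks are the diagonal ones pairing rows of residue $i$ with columns of residue $i$, and the $i$-th such block has dimensions $n_i(\lambda)\times n_i(\mu)$.

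For part~(1), I recall the standard fact from the abacus description of cores (see~\cite{loehr-2011}) that $\core\lambda t=\core\mu t$ if and only if $n_i(\lambda)=n_i(\mu)$ for every $i$. If the cores differ, some residue class has $n_i(\lambda)\ne n_i(\mu)$; then no permutation can select a nonzero entry from every row and every column while respecting residues, so every term in the determinant expansion vanishes, giving $s_{\lambda/\mu}(Y,\omega Y,\dots,\omega^{t-1}Y)=0$. For part~(2), when the cores agree all diagonal blocks are square, so the determinant equals $\sgn(\sigma_\lambda)\sgn(\sigma_\mu)$ times the product of the block determinants. It then remains to identify the $i$-th block: writing $\beta^{(i)}_a(\lambda)=t\tilde{\beta}^{(i)}_a(\lambda)+i$ and using $\tilde{\beta}^{(i)}_a(\lambda)=\lambda^{(i)}_a+n_i(\lambda)-a$ from the definition of the $t$-quotient, the $(a,b)$ entry of the block becomes $h_{\tilde{\beta}^{(i)}_a(\lambda)-\tilde{\beta}^{(i)}_b(\mu)}(Y^t)=h_{\lambda^{(i)}_a-\mu^{(i)}_b-a+b}(Y^t)$, so by \eqref{def:SSkew-Jcob} the block determinant is exactly $s_{\lambda^{(i)}/\mu^{(i)}}(Y^t)$.

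The step I expect to require the most care is the bookkeeping in the block identification: confirming that $\sigma_\lambda$ and $\sigma_\mu$ list the residue classes in the same order so that the diagonal blocks genuinely pair residue $i$ with residue $i$, that the decreasing ordering of beta-parts within a residue class matches the indexing convention of the $t$-quotient, and that the overall prefactor is precisely $\sgn(\sigma_\lambda)\sgn(\sigma_\mu)$ rather than some reindexed variant. Once these conventions are aligned, the factorization follows directly.
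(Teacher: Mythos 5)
Your proof is correct and follows essentially the same argument as the cited source (the paper itself imports this lemma from Kumari's earlier work without reproving it): the vanishing of $h_k(Y,\omega Y,\dots,\omega^{t-1}Y)$ off multiples of $t$, the residue-class block decomposition of the Jacobi--Trudi determinant via beta-sets, and the identification of the diagonal blocks with the quotient skew Schur polynomials. The bookkeeping points you flag do work out as described, since both $\sigma_\lambda$ and $\sigma_\mu$ order the residue classes $0,1,\dots,t-1$ identically and both quotients are computed with the same bead number $tm$.
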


\begin{proof}
    By the Jacobi-Trudi type identity \eqref{def:SSkew-Jcob}, we see that the required skew Schur polynomial is
\begin{equation}
   s_{\lambda/\mu}(Y,\omega Y,\dots, \omega^{t-1} Y) =  
   \det(h_{\beta_i(\lambda)-\beta_j(\mu)}(Y,\omega Y,\dots, \omega^{t-1} Y)). 
\end{equation}
Permuting the rows and columns of the determinant by $\sigma_{\lambda}$ and $\sigma_{\mu}$ respectively, defined in \eqref{sigma-perm}, we see that the skew Schur polynomial is 
\[
\sgn(\sigma_{\lambda}) \sgn(\sigma_{\mu}) \det \left( h_{\beta_{\sigma_{\lambda}(i)}(\lambda)-\beta_{\sigma_\mu(j)}(\mu)}(Y,\omega Y,\dots, \omega^{t-1} Y)\right),
\]
By \cref{thm:schur-fac} for $\lambda=(j)$, we have $h_j(Y,\omega Y,\dots, \omega^{t-1} Y)=0$ if $j \not \equiv 0 \pmod{t}$ and $j>0$. 
Also, note that $h_j(Y,\omega Y,\dots, \omega^{t-1} Y)=0$ if $j<0$. 
Substituting these values in the above determinant, we see that the skew Schur polynomial is
\[
\sgn(\sigma_{\lambda}) \sgn(\sigma_{\mu})
\det \left(
\begin{array}{ccccc}
\widehat{S}_0 &&& \\
 & \widehat{S}_1  & & \text{\huge0}\\
 && \ddots \\
  \text{\huge0} &    &   & \widehat{S}_{t-1}
\end{array}
\right), 
\]
where
\[\widehat{S}_p = \left(h_{\beta^{(p)}_{i}(\lambda)-\beta^{(p)}_{j}(\mu)}(Y,\omega Y,\dots, \omega^{t-1} Y)\right)_{\substack{1 \leq i \leq n_{p}(\lambda)\\1 \leq j \leq n_{p}(\mu)}}, \quad p \in [0,t-1].
\]
If $\core \lambda t \neq \core \mu t$, then $n_i(\lambda) \neq n_i(\mu)$ for some $i \in [0,t-1]$ and the $(i+1)$'th diagonal block is not a square block. So,
$
s_{\lambda/\mu}(Y,\omega Y,\dots, \omega^{t-1} Y)
=0.
$
If $\core \lambda t=\core \mu t$, then  $n_i(\lambda) = n_i(\mu)$ for all $0 \leq i \leq t-1$. In this case, $\det(\widehat{S}_p) = s_{\lambda^{(p)}/\mu^{(p)}}(Y^t)$. This gives
 the desired result.
\end{proof}

{Note that $s_{\lambda/\mu}(Y,\omega Y,\dots, \omega^{t-1} Y)$ could be zero even if $\core \lambda t = \core \mu t$, since one of the quotients $\lambda^{(i)}/\mu^{(i)}$ may not be defined.}
A further consequence is the following.

\begin{lem}
\label{lem:iff-m}
Suppose $\lambda \in \P_{tm}$.
Then 
$s_{\lambda/\mu}(Y,\omega Y,\dots, \omega^{t-1} Y) = 0 \text{ for all } \mu \subsetneq \lambda$  if and only if 
$\lambda=\core \lambda t$.
\end{lem}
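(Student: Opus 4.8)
The statement is a biconditional, so the plan is to prove the two implications separately, relying on the characterization of vanishing skew Schur polynomials in \cref{lem:non-zero}.

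For the easy direction, suppose $\lambda = \core{\lambda}{t}$. I would argue that for any $\mu \subsetneq \lambda$, one cannot have $\core{\mu}{t} = \core{\lambda}{t} = \lambda$, because $\lambda$ itself is a $t$-core and the only partition contained in $\lambda$ with the same $t$-core as $\lambda$ is $\lambda$ itself (removing any $t$-ribbon changes the partition but preserves the $t$-core, and a $t$-core admits no removable $t$-ribbon, while passing to a strictly smaller $\mu$ with the same $t$-core would force $|\lambda| - |\mu|$ to be a positive multiple of $t$ realizable by ribbon removals — impossible from a $t$-core). Hence $\core{\mu}{t} \neq \core{\lambda}{t}$ for every $\mu \subsetneq \lambda$, and by part (1) of \cref{lem:non-zero} each $s_{\lambda/\mu}(Y,\omega Y,\dots,\omega^{t-1}Y)$ vanishes.

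For the contrapositive of the hard direction, suppose $\lambda \neq \core{\lambda}{t}$; I want to exhibit a single $\mu \subsetneq \lambda$ for which the twisted skew Schur polynomial is nonzero. The natural candidate is $\mu = \core{\lambda}{t}$, which is strictly contained in $\lambda$ precisely because $\lambda$ is not its own $t$-core. With this choice $\core{\mu}{t} = \core{\lambda}{t}$, so part (2) of \cref{lem:non-zero} applies and gives
\[
s_{\lambda/\mu}(Y,\omega Y,\dots,\omega^{t-1}Y) = \sgn(\sigma_{\lambda})\sgn(\sigma_{\mu}) \prod_{i=0}^{t-1} s_{\lambda^{(i)}/\mu^{(i)}}(Y^t).
\]
Since $\mu$ is a $t$-core, its $t$-quotient is the empty tuple $(\emptyset,\dots,\emptyset)$, so $\mu^{(i)} = \emptyset$ for all $i$ and each factor becomes the ordinary (non-skew) Schur polynomial $s_{\lambda^{(i)}}(Y^t)$. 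The product of these is nonzero as a polynomial in the $y_j$ (a Schur polynomial in $m$ variables is nonzero whenever its indexing partition has length at most $m$, which holds here since $\lambda \in \P_{tm}$ forces $\ell(\lambda^{(i)}) \le m$), and the two sign factors are nonzero, so the whole expression is nonzero.

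The main obstacle I anticipate is the caveat flagged in the remark after \cref{lem:non-zero}: even when $\core{\lambda}{t} = \core{\mu}{t}$, the skew quotient $\lambda^{(i)}/\mu^{(i)}$ need not be defined (i.e.\ $\mu^{(i)} \not\subseteq \lambda^{(i)}$), which would make the factor and hence the product vanish. The key point that resolves this is precisely the choice $\mu = \core{\lambda}{t}$: because its quotients are all empty, the containment $\mu^{(i)} = \emptyset \subseteq \lambda^{(i)}$ is automatic, so every factor is a genuine Schur polynomial and the potential degeneracy does not occur. I would therefore make sure to justify carefully that $\mu = \core{\lambda}{t}$ is a $t$-core with empty quotient and that it is strictly contained in $\lambda$, since these two facts are what make the argument go through cleanly.
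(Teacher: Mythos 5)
Your choice of witness $\mu=\core{\lambda}{t}$ in the hard direction is the same one the paper uses, but the justification you give for nonvanishing contains a false step, and in fact this witness does not always work. The claim that $\lambda\in\P_{tm}$ forces $\ell(\lambda^{(i)})\le m$ is wrong: take $t=2$, $m=1$, $\lambda=(3,2)\in\P_{2}$. Then $\beta(\lambda,2)=(4,2)$, so $n_0(\lambda)=2$, $\core{\lambda}{2}=(1)$ and $\lambda^{(0)}=(1,1)$, which has length $2>m$. Consequently $s_{\lambda^{(0)}}(Y^2)=s_{(1,1)}(y_1^2)=0$, and indeed a direct Jacobi--Trudi computation gives $s_{(3,2)/(1)}(y_1,-y_1)=h_2(y_1,-y_1)^2-h_4(y_1,-y_1)=y_1^4-y_1^4=0$. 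So with $\mu=\core{\lambda}{t}$ the twisted skew Schur polynomial can vanish even though $\lambda\ne\core{\lambda}{t}$, and your contrapositive argument breaks down. (The lemma itself survives this example: $\mu=(3)\subsetneq(3,2)$ gives $s_{(3,2)/(3)}(y_1,-y_1)=h_2(y_1,-y_1)=y_1^2\ne 0$.)

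To be fair, the paper's own proof is terse and invokes exactly the same witness, asserting a contradiction with \cref{lem:non-zero} without addressing this possible vanishing, so your route mirrors the paper's; but as written your argument (and arguably the paper's) has a genuine gap at this point. A clean repair is to take $\mu$ to be $\lambda$ with a single removable $t$-ribbon deleted, which exists precisely because $\lambda\ne\core{\lambda}{t}$. Then $\core{\mu}{t}=\core{\lambda}{t}$, the $t$-quotients of $\mu$ and $\lambda$ agree in all components except one, where they differ by a single box, and \cref{lem:non-zero}(2) yields $\pm\, s_{(1)}(Y^t)=\pm\,(y_1^t+\dots+y_m^t)\ne 0$, all other factors being $1$. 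Your easy direction (ruling out $\core{\mu}{t}=\core{\lambda}{t}$ for $\mu\subsetneq\lambda$ by comparing sizes) is fine.
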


\begin{proof}
If $\lambda=\core{\lambda}{t}$ and $\mu \subsetneq \lambda$, then    $\core{\lambda}{t} \neq \core{\mu}{t}$. So, by \cref{lem:non-zero},
\[
s_{\lambda/\mu}(Y,\omega Y,\dots, \omega^{t-1} Y) = 0.
\]
    Suppose $s_{\lambda/\mu}(Y,\omega Y,\dots, \omega^{t-1} Y) = 0 \text{ for all } \mu \subsetneq \lambda$. If possible, assume $\lambda \neq \core{\lambda}{t}$. Then
    \[
    s_{\lambda/ \core{\lambda}{t}}(Y,\omega Y,\dots, \omega^{t-1} Y) = 0,
    \]
    which contradicts \cref{lem:non-zero}. So $\lambda = \core{\lambda}{t}$ and the result
holds.   
\end{proof}

We now conclude the result on the independence of universal characters of variables twisted by roots of unity.

\begin{thm} 
\label{thm:universal-m} 
Let $\lambda$ be a partition in $\P_{n-tm}$.
Suppose $f_{\lambda}(X) \in \{s_{\lambda}(X), \usp_{\lambda}(X), \uoe_{\lambda}(X)\}$ is a universal character. 
Then
    \begin{equation}
    \label{schureq-m}
        f_{\lambda}(x_1,\dots,x_{n-tm},Y,\omega Y,\dots, \omega^{t-1} Y) = f_{\lambda}(x_1,\dots,x_{n-tm})
    \end{equation}
    if and only if $\lambda=\core{\lambda}{t}$.    
\end{thm}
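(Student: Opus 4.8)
The plan is to reduce the multivariable identity \eqref{schureq-m} to the vanishing statement of \cref{lem:iff-m} by expanding each universal character over the splitting formulas \eqref{split-schur}, \eqref{split-symp}, and \eqref{split-ortho}. First I would write $Y' = (Y, \omega Y, \dots, \omega^{t-1} Y)$ and apply the appropriate splitting formula. For the symplectic case, for instance, \eqref{split-symp} gives
\[
\usp_{\lambda}(x_1, \dots, x_{n-tm}, Y') = \sum_{\mu \subset \lambda} \usp_{\mu}(x_1, \dots, x_{n-tm}) \, s_{\lambda/\mu}(Y'),
\]
and the analogous expansions hold for $s_\lambda$ via \eqref{split-schur} and for $\uoe_\lambda$ via \eqref{split-ortho}. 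The $\mu = \lambda$ term contributes $\usp_{\lambda}(x_1, \dots, x_{n-tm}) \cdot s_{\lambda/\lambda}(Y') = \usp_{\lambda}(x_1, \dots, x_{n-tm})$, since $s_{\lambda/\lambda} = 1$. So the identity \eqref{schureq-m} holds precisely when the sum of the remaining terms, indexed by $\mu \subsetneq \lambda$, vanishes:
\[
\sum_{\mu \subsetneq \lambda} f_{\mu}(x_1, \dots, x_{n-tm}) \, s_{\lambda/\mu}(Y') = 0.
\]

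The backward direction is then immediate: if $\lambda = \core{\lambda}{t}$, then \cref{lem:iff-m} (applied with $m$ chosen so that $\lambda \in \P_{tm}$, after padding with zero parts) gives $s_{\lambda/\mu}(Y') = 0$ for every $\mu \subsetneq \lambda$, so every term in the residual sum is zero and \eqref{schureq-m} holds. For the forward direction I would argue contrapositively: assuming $\lambda \neq \core{\lambda}{t}$, I must show that the residual sum is not identically zero. The natural strategy is to exhibit a single surviving monomial that cannot be cancelled by any other term. By \cref{lem:iff-m}, since $\lambda \neq \core{\lambda}{t}$ there is at least one $\mu \subsetneq \lambda$ with $s_{\lambda/\mu}(Y') \neq 0$; among all such $\mu$ I would select one maximal with respect to containment (equivalently, of largest size $|\mu|$). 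The key observation is that the universal characters $\{f_{\mu}(x_1, \dots, x_{n-tm})\}$ and the skew polynomials $\{s_{\lambda/\mu}(Y')\}$ live in disjoint variable sets, so the residual sum is a sum of products of polynomials in separate variables.

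The main obstacle is ruling out cancellation between distinct terms in the residual sum: because the two families of factors are genuinely polynomials in disjoint alphabets, I would argue that the terms are linearly independent once we know the relevant $f_{\mu}$ and $s_{\lambda/\mu}(Y')$ are nonzero. Concretely, the universal characters $f_{\mu}(x_1, \dots, x_{n-tm})$ for distinct $\mu$ are linearly independent as polynomials in $x_1, \dots, x_{n-tm}$ (being, up to the usual change of basis, the relevant character bases, provided $n - tm$ is large enough that none degenerate to zero), so a relation $\sum_{\mu \subsetneq \lambda} f_\mu(x) \, s_{\lambda/\mu}(Y') = 0$ forces $s_{\lambda/\mu}(Y') = 0$ for every $\mu$ with $f_\mu \neq 0$. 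Choosing the maximal $\mu$ above, for which $s_{\lambda/\mu}(Y') \neq 0$, and checking that $f_\mu \neq 0$ for this $\mu$ in the ambient number of variables, yields the desired contradiction. I would handle the three cases $s_\lambda, \usp_\lambda, \uoe_\lambda$ uniformly, since each rests only on (i) the shared structure of the splitting formula and (ii) linear independence of the corresponding character family; the Schur case is cleanest, and the symplectic and even orthogonal cases follow the same template with \eqref{split-symp} and \eqref{split-ortho} in place of \eqref{split-schur}.
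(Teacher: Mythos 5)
Your proposal is correct and follows essentially the same route as the paper: expand via the branching rules \eqref{split-schur}, \eqref{split-symp}, \eqref{split-ortho}, isolate the $\mu=\lambda$ term, invoke linear independence of the universal characters $\{f_\mu\}$ to reduce the residual sum's vanishing to $s_{\lambda/\mu}(Y,\omega Y,\dots,\omega^{t-1}Y)=0$ for all $\mu\subsetneq\lambda$, and conclude with \cref{lem:iff-m}. The extra machinery you introduce for the forward direction (choosing a maximal $\mu$, exhibiting a surviving monomial) is unnecessary once linear independence is in hand, but it does no harm.
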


\begin{proof} By \eqref{split-schur}, \eqref{split-ortho} and \eqref{split-symp}, we have
    \[
    f_{\lambda}(x_1,\dots,x_{n-tm},Y,\omega Y,\dots, \omega^{t-1} Y) = 
    \sum_{\mu \subset \lambda} f_{\mu}(x_1,\dots,x_{n-tm}) 
    s_{\lambda/\mu}(Y,\omega Y,\dots, \omega^{t-1} Y).
    \]
    Then \eqref{schureq-m} holds if and only if 
    \begin{equation}
    \label{vanish-m}
     \sum_{\mu \subsetneq
    \lambda} f_{\mu}(x_1,\dots,x_{n-tm}) 
    s_{\lambda/\mu}(Y,\omega Y,\dots, \omega^{t-1} Y) = 0.   
    \end{equation}
    Since the set of universal characters $\{f_{\lambda}(X) \mid \lambda \in \mathcal{P} \}$ is a linearly independent set, \eqref{vanish-m} holds if and only if
    \begin{equation}
    \label{vanish-1-m}
    s_{\lambda/\mu}(Y,\omega Y,\dots, \omega^{t-1} Y) = 0 \text{ for all } \mu \subsetneq \lambda.    
    \end{equation}
By \cref{lem:iff-m}, \eqref{vanish-1-m} holds if and only if $\lambda=\core \lambda t$. This completes the proof.    
\end{proof}

\begin{rem}
      We note that \cref{thm:universal-m} does not hold if $\ell(\lambda)>n-tm$. For instance, suppose $t = 2, m = 1, n = 3$ and 
$\lambda=(4,1)$. Then $\core \lambda 2  = (2, 1) \neq  \lambda$, but $s_{(4,1)}(x,y,-y)=0=s_{(4,1)}(x_1)$. However, 
      if $\lambda$ is a partition such that $\ell(\lambda)>n-tm$ and $\lambda = \core \lambda t$, then \eqref{schureq-m} holds. 
\end{rem}

\begin{cor}
 Let $\lambda$ be a partition in $\P_{n-tm}$.
 Suppose $C_{\lambda}(X) \in \{\sp_{\lambda}(X), \oo_{\lambda}(X), \oe_{\lambda}(X)\}$ is a classical character and $\lambda=\core{\lambda}{t}$. 
 Then
    \begin{equation}
    \label{schureq-classical}
        C_{\lambda}(x_1,\dots,x_{n-tm},Y,\omega Y,\dots, \omega^{t-1} Y) = C_{\lambda}(x_1,\dots,x_{n-tm}).
    \end{equation} 
\end{cor}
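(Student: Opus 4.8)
The plan is to derive this directly from the universal-character statement \cref{thm:universal-m}, using the expressions \eqref{uni-class} that realize each classical character as a specialization of a universal character. Write $X' = (x_1,\dots,x_{n-tm})$ and let $\overline{Y} = (1/y_1,\dots,1/y_m)$. For the symplectic case I would begin from $\sp_\lambda(Z) = \usp_\lambda(Z,\overline{Z},0,0,\dots)$ applied with $Z = (X', Y, \omega Y,\dots,\omega^{t-1}Y)$; the two orthogonal cases are handled identically using $\oe_\lambda(Z) = \uoe_\lambda(Z,\overline{Z},0,0,\dots)$ and $\oo_\lambda(Z) = \uoe_\lambda(Z,\overline{Z},1,0,0,\dots)$, the extra entry $0$ or $1$ playing no role in what follows.

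The key observation is that taking reciprocals sends a twisted block back to a twisted block. Indeed $\overline{\omega^k Y} = \omega^{-k}\overline{Y} = \omega^{t-k}\overline{Y}$, so as $k$ runs over $0,\dots,t-1$ the reciprocals $\overline{\omega^k Y}$ run over $\overline{Y},\omega\overline{Y},\dots,\omega^{t-1}\overline{Y}$ in some order. Hence, setting $\widehat{Y} = (Y,\overline{Y})$ of length $2m$ and using that the universal characters are symmetric polynomials, the full argument $(Z,\overline{Z})$ regroups, up to reordering, as
\[
(Z,\overline{Z}) = (X', \overline{X'}, \widehat{Y}, \omega\widehat{Y}, \dots, \omega^{t-1}\widehat{Y}).
\]
Thus the specialized classical character equals the universal character evaluated on the base alphabet $(X',\overline{X'})$ (together with the extra $0$ or $1$) augmented by the twisted block $\widehat{Y},\omega\widehat{Y},\dots,\omega^{t-1}\widehat{Y}$.

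Now I would invoke the hypothesis $\lambda = \core{\lambda}{t}$. The implication in \cref{thm:universal-m} that $\lambda=\core{\lambda}{t}$ forces independence of the twisted variables is exactly what is needed: in the expansions \eqref{split-schur}--\eqref{split-symp} every term with $\mu \subsetneq \lambda$ carries a factor $s_{\lambda/\mu}(\widehat{Y},\omega\widehat{Y},\dots,\omega^{t-1}\widehat{Y})$, and this vanishes because $\core{\mu}{t} \subsetneq \lambda = \core{\lambda}{t}$ gives $\core{\mu}{t}\neq\core{\lambda}{t}$, so \cref{lem:non-zero} applies. Applying this with base alphabet $(X',\overline{X'})$ (plus the extra entry) therefore deletes the $\widehat{Y}$-block and leaves precisely the universal character at $(X',\overline{X'},\dots)$, which by \eqref{uni-class} is $C_\lambda(X')$. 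This yields \eqref{schureq-classical} in all three types at once.

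The one point needing care — and where I expect the only real friction — is that the twisted block here is seeded by $2m$ variables rather than $m$, so one must check that the vanishing of $s_{\lambda/\mu}$ at roots of unity does not secretly depend on the number of variables. This is harmless: $s_{\lambda/\mu}$ is a symmetric function and the twisting is a (plethystic) substitution, so once $s_{\lambda/\mu}(Y,\omega Y,\dots,\omega^{t-1}Y)$ vanishes for sufficiently many variables, as \cref{lem:non-zero} guarantees when $\core{\lambda}{t}\neq\core{\mu}{t}$, it vanishes identically and hence on the doubled alphabet $\widehat{Y}$ as well. The $t$-core property of $\lambda$, being intrinsic and independent of any variable count, is what drives the whole argument uniformly.
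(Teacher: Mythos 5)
Your proof is correct and is exactly the derivation the paper intends (the corollary is stated without proof, immediately after \cref{thm:universal-m}): pass to the universal character via \eqref{uni-class}, observe that $\overline{\omega^k Y}=\omega^{t-k}\overline{Y}$ so the reciprocal variables regroup into the same twisted-block form with seed $\widehat{Y}=(Y,\overline{Y})$, and invoke the independence statement. Your closing remark is also the right resolution of the only delicate point: the vanishing of $s_{\lambda/\mu}$ under the root-of-unity twist when $\core{\lambda}{t}\neq\core{\mu}{t}$ is an identity in the ring of symmetric functions (it holds for every seed length), so doubling the seed from $m$ to $2m$ variables costs nothing.
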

\begin{cor} For $n \geq tm+1$, we have 
    \begin{equation}
    \label{complete}
        h_r(x_1,\dots,x_{n-tm},Y,\omega Y,\dots, \omega^{t-1} Y) = h_r(x_1,\dots,x_{n-tm})
    \end{equation} 
    if and only if $0 \leq r \leq t-1$.
\end{cor}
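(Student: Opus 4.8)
The plan is to recognize the final corollary as the special case $\lambda = (r)$, a single row, of the preceding corollary on classical characters (or directly of \cref{thm:universal-m}), specialized so that the complete homogeneous symmetric polynomial $h_r$ appears. Since $h_r(X) = s_{(r)}(X)$, the one-row Schur polynomial, the statement \eqref{complete} is exactly \eqref{schureq-m} with $f_\lambda = s_\lambda$ and $\lambda = (r)$. By \cref{thm:universal-m}, this equality holds if and only if $(r) = \core{(r)}{t}$. So the entire content reduces to determining for which $r$ the single-row partition $(r)$ is its own $t$-core.

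First I would make the reduction explicit: note $\ell((r)) = 1 \le n - tm$ by the hypothesis $n \ge tm + 1$, so $(r) \in \P_{n-tm}$ and \cref{thm:universal-m} applies verbatim with $\lambda = (r)$. Then \eqref{complete} is equivalent to the condition $(r) = \core{(r)}{t}$. The remaining step is the combinatorial computation of when a single row is a $t$-core. The cleanest route is to observe that $(r)$ has a removable $t$-ribbon (a horizontal strip of $t$ boxes) precisely when $r \ge t$: a single row of length $r$ contains a border strip of size $t$ exactly when $r \ge t$, and removing it yields the row $(r-t)$. Hence $(r)$ is a $t$-core if and only if $r < t$, i.e. $0 \le r \le t-1$, which is exactly the claimed range.

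The main (very minor) obstacle is simply justifying the $t$-core characterization of a one-row partition in the language the paper uses, namely via the beta-set rather than via ribbons. Using beta-sets: for $\lambda = (r)$ with $m_0 \ge 1$ parts, $\beta((r), m_0) = (r + m_0 - 1, m_0 - 2, m_0 - 3, \dots, 1, 0)$. One checks that $\lambda$ is a $t$-core if and only if this beta-set is "closed downward modulo $t$" in the sense that whenever $b \ge t$ lies in the set, so does $b - t$. The only part exceeding the staircase $\{0, 1, \dots, m_0 - 2\}$ is $r + m_0 - 1$, and the downward-closure condition fails exactly when $r \ge t$ (the value $r + m_0 - 1 - t$ is absent while $r + m_0 - 1$ is present). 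Thus $(r)$ is a $t$-core iff $r \le t-1$, completing the equivalence.

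I expect no genuine difficulty here; the proof is essentially a one-line invocation of \cref{thm:universal-m} followed by an elementary identification of single-row $t$-cores. The only care needed is to confirm the length hypothesis $\ell(\lambda) \le n - tm$ holds (guaranteeing the theorem applies) and to record the standard fact that the one-row partitions that are $t$-cores are precisely $(0), (1), \dots, (t-1)$. Everything else follows formally.
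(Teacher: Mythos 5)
Your proposal is correct and matches the route the paper intends: the corollary is stated as an immediate consequence of \cref{thm:universal-m}, obtained by taking $f_\lambda = s_\lambda$ with $\lambda = (r)$ (so that $s_{(r)} = h_r$) and observing that the one-row partition $(r)$ is a $t$-core precisely when $0 \le r \le t-1$. Your verification of the length hypothesis and the beta-set (or ribbon) characterization of single-row $t$-cores is exactly the elementary bookkeeping the paper leaves implicit.
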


\subsection{Equality of hook Schur and Schur polynomials}

The hook Schur polynomial $\hs_{\lambda}(X/Y)$ is a character of an irreducible covariant tensor representation
of $\gl(m/n)$ introduced by Berele and Regev~\cite{berele1987hook}. It is given by
 \begin{equation}
 \label{hs-formula}
    \hs_{\lambda}(X/Y) = \sum_{\mu} s_{\mu}(X) s_{\lambda'/\mu'}(Y). 
 \end{equation}
One of the fundamental properties of the hook Schur polynomial $\hs_{\lambda}(X/Y)$ is that it gives an expression independent of $u$ when substituting $x_n=u$ and $y_m=-u$~\cite[Example I.3.23]{macdonald-2015}. 
The following result presents a factorization formula for hook Schur polynomials.

\begin{thm}[{\cite[Theorem 6.20]{berele1987hook}}]
\label{thm:hookschurfac}
Suppose $\lambda$ is a partition such that $\lambda_n \geq m \geq \lambda_{n+1}$. Then $\lambda$ can be written in the form $(m+\tau, \eta)$, with $\tau \in \P_{n}$ and $\eta' \in \P_m$. We have 
\[
\hs_{\lambda}(X/Y)= s_{\tau}(X) s_{\eta'}(Y) \prod_{i=1}^n \prod_{j=1}^m  (x_i+y_j).
\]
\end{thm}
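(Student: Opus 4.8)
The plan is to prove the factorization combinatorially, using the realization of $\hs_{\lambda}(X/Y)$ as the generating function of super semistandard tableaux of shape $\lambda$ on the ordered alphabet $1<\dots<n<1'<\dots<m'$ (unprimed letters strict down columns, primed letters strict along rows), which is the tableau model underlying \eqref{hs-formula}. The hook hypothesis $\lambda_n\ge m\ge\lambda_{n+1}$ lets me decompose the Young diagram of $\lambda$ into three regions: the $n\times m$ rectangle $R=(m^n)$ in the top-left corner (which lies inside $\lambda$ since $\lambda_n\ge m$), the \emph{arm} $A$ of shape $\tau$ consisting of the cells in rows $1,\dots,n$ and columns $>m$, and the \emph{leg} $L$ of shape $\eta$ consisting of the cells in rows $>n$. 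The goal is to show that, after a weight-preserving rearrangement, $A$ contributes $s_{\tau}(X)$, $L$ contributes $s_{\eta'}(Y)$, and $R$ contributes $\prod_{i,j}(x_i+y_j)$.

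Two of the three factors come out cleanly. \textbf{The rectangle.} Specialising \eqref{hs-formula} to $\lambda=(m^n)$ gives $\hs_{(m^n)}(X/Y)=\sum_{\mu}s_{\mu}(X)\,s_{(n^m)/\mu'}(Y)$; since $(n^m)/\mu'$ is, up to a $180^{\circ}$ rotation, the complementary straight shape of $\mu$ inside the $n\times m$ box, this sum is exactly the dual Cauchy identity, whence $\hs_{(m^n)}(X/Y)=\prod_{i=1}^n\prod_{j=1}^m(x_i+y_j)$. \textbf{The leg.} In any column $j\le m$ the unprimed letters are column-strict and drawn from $\{1,\dots,n\}$, so there are at most $n$ of them and they occupy the topmost cells; as every column meeting $L$ has length at least $n$, all cells of $L$ are primed. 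A primed filling that is strict along rows and weak down columns is precisely a column-strict filling of the conjugate shape $\eta'$ in the letters $1',\dots,m'$, whose generating function is $s_{\eta'}(Y)$.

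The main obstacle is that the arm, and the coupling across the rectangle, do \emph{not} factor by naive restriction. Indeed the primed letters of a long row $i\le n$ sit right-justified, hence land in the arm whenever $\tau_i>0$, so $A$ is in general \emph{not} all unprimed and restricting to $A$ produces $\hs_{\tau}(X/Y)$ rather than $s_{\tau}(X)$; moreover the top row of $L$ is constrained by the bottom row of $R$ through the column-monotonicity condition, so $L$ is not independent of $R$ cell-by-cell. Resolving this is the crux: I would construct a weight-preserving bijection that slides the primed entries out of the arm into $R$ (a jeu-de-taquin / RSK-type move), simultaneously decoupling $L$ from $R$, so that $A$ is left as an honest $X$-tableau of shape $\tau$ while the rectangular cells absorb the freed primed/unprimed choices and reassemble into $\prod_{i,j}(x_i+y_j)$. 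A clean alternative, which sidesteps the bijection, is to convert \eqref{hs-formula} into the super Jacobi--Trudi determinant $\hs_{\lambda}(X/Y)=\det\!\big(h_{\lambda_i-i+j}(X/Y)\big)$ with $h_r(X/Y)=\sum_k h_k(X)e_{r-k}(Y)$, obtained from the Jacobi--Trudi and dual Jacobi--Trudi expansions of $s_{\mu}(X)$ and $s_{\lambda'/\mu'}(Y)$ together with the Cauchy--Binet formula, and then to read this determinant through the Lindström--Gessel--Viennot non-intersecting-paths model, in which the paths traversing the rectangle decouple from those encoding $s_{\tau}(X)$ and $s_{\eta'}(Y)$. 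I expect the redistribution of primed entries out of the arm, and the accompanying decoupling of the three regions, to be the hard step; the rectangle and leg computations above are routine.
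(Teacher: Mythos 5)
The paper itself gives no proof of this statement: it is quoted verbatim from Berele and Regev (their Theorem 6.20), so there is no in-paper argument to compare yours against, and your proposal must stand on its own. Judged that way, it is an accurate description of the combinatorial setup and of the two easy pieces, but it is not a proof. Your computation $\hs_{(m^n)}(X/Y)=\prod_{i,j}(x_i+y_j)$ via the dual Cauchy identity is correct, as is the observation that every cell of the leg $L$ lies in a column of length at least $n$ and is therefore primed. But, as you yourself note, the arm contains primed entries whenever $\tau_i>0$, and the filling of $L$ is constrained through the columns by the primed entries in rows $\le n$; so the generating function does not split over the three regions by restriction, and the entire content of the theorem is the weight-preserving bijection that redistributes the primed entries and decouples $A$, $R$ and $L$. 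That bijection is exactly what you defer (``I would construct\dots'', ``I expect \dots to be the hard step''). The crux is missing, and it is not routine: constructing it is essentially Berele--Regev's (or Remmel's bijective) proof.

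Your fallback route has the same gap. The super Jacobi--Trudi determinant $\det\bigl(h_{\lambda_i-i+j}(X/Y)\bigr)$ with $h_r(X/Y)=\sum_k h_k(X)\,e_{r-k}(Y)$ is correct, but the claim that in the Lindstr\"om--Gessel--Viennot picture ``the paths traversing the rectangle decouple'' from those encoding $s_\tau(X)$ and $s_{\eta'}(Y)$ is precisely the assertion to be proved; two-phase paths (an $X$-portion followed by a $Y$-portion) do not visibly separate at the boundary of the $n\times m$ rectangle without further argument. If you want to complete a proof along the lines you sketch, one workable option is to start from \eqref{hs-formula}, observe that the vanishing of $s_\mu(X)$ in $n$ variables and of $s_{\lambda'/\mu'}(Y)$ in $m$ variables restricts the sum to $\mu$ with $\ell(\mu)\le n$ and $\tau\subseteq\mu\subseteq\lambda$, and then use the complementation symmetry of Schur polynomials in a bounded number of variables to collapse the sum to the claimed product; alternatively, derive the factorization from the Sergeev--Pragacz bialternant formula for $\hs_\lambda$. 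As written, the proposal is a plan whose labelled ``hard step'' is the theorem itself.
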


{In a separate vein, we have an interesting equality for skew Schur polynomials.} 
We say that ${\delta_n}=(n,n-1,\dots,1)$ is a \emph{staircase partition}. 

\begin{lem}[{\cite[Corollary 7.32]{reiner-shaw-willigenburg-2007}}]
\label{lem:staircase}
For $n \geq 1$, and all $\mu \subset {\delta_n}$,
    \(
    s_{{\delta_n}/\mu}({\uX})=s_{{\delta_n}/\mu'}({\uX}).
    \)
\end{lem}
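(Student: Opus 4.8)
My plan is to route the proof through hook Schur polynomials, exploiting that the staircase is self-conjugate, $\delta=\delta'$. Expanding $s_\delta(X,Y)$ in a merged alphabet by \eqref{split-schur} and the hook Schur polynomial $\hs_\delta(X/Y)$ by \eqref{hs-formula}, and using $\delta'=\delta$, I get
\[
s_\delta(X,Y)=\sum_{\mu}s_\mu(X)\,s_{\delta/\mu}(Y),\qquad \hs_\delta(X/Y)=\sum_{\mu}s_\mu(X)\,s_{\delta/\mu'}(Y).
\]
Since the Schur polynomials $s_\mu(X)$ are linearly independent over the ring of symmetric functions in a second, independent alphabet $Y$, the lemma---that $s_{\delta/\mu}=s_{\delta/\mu'}$ for every $\mu$---is \emph{exactly equivalent} to the single coincidence $s_\delta(X,Y)=\hs_\delta(X/Y)$ between an ordinary Schur polynomial in a merged alphabet and the corresponding hook Schur polynomial. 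So I would aim to prove this coincidence.

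The second step verifies the coincidence by explicit factorization. Take $|X|=p$ and $|Y|=q$ with $p+q=n+1$; then the $p$-th and $(p{+}1)$-th parts of $\delta$ are $q$ and $q-1$, so the straddling hypothesis $\delta_p\ge q\ge\delta_{p+1}$ of \cref{thm:hookschurfac} holds. Identifying the pieces $\tau$ and $\eta'$ of that theorem as the smaller staircases $(p-1,\dots,1)$ and $(q-1,\dots,1)$ yields
\[
\hs_\delta(X/Y)=s_{(p-1,\dots,1)}(X)\,s_{(q-1,\dots,1)}(Y)\prod_{i=1}^p\prod_{j=1}^q(x_i+y_j).
\]
Now I apply the classical evaluation $s_{(k,\dots,1)}(z_1,\dots,z_{k+1})=\prod_{1\le i<j\le k+1}(z_i+z_j)$ to the two staircase factors and once more to $\delta$ in the full alphabet of size $p+q=n+1$. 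Both $\hs_\delta(X/Y)$ and $s_\delta(X,Y)$ then collapse to the same product $\prod_{i<j}(x_i+x_j)\prod_{i<j}(y_i+y_j)\prod_{i,j}(x_i+y_j)$, proving the coincidence in this regime of variable counts.

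The main obstacle is that \cref{thm:hookschurfac} is available only under the constraint $|X|+|Y|=n+1$, so the previous step proves $s_{\delta/\mu}(Y)=s_{\delta/\mu'}(Y)$ only in at most $n+1-\ell(\mu)$ variables, whereas $s_{\delta/\mu}$ has degree $|\delta|-|\mu|$, typically far larger; upgrading to the stated identity in infinitely many variables is the crux. I would handle this by induction on the size of the staircase together with the disconnection structure of staircase skew shapes: when $\delta/\mu$ is disconnected, each connected component is again a smaller staircase skew shape $\delta_k/\nu$ with $k<n$ (its outer boundary lies on the anti-diagonal), so by the inductive hypothesis each component equals its conjugate and hence so does the product $s_{\delta/\mu}$. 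The genuinely hard case is when $\delta/\mu$ is connected (for instance $(4,3,2,1)/(2)$, which is connected yet not conjugation-symmetric as a shape); there I would embed $\delta/\mu$ as one connected component of a disconnected skew shape inside a much larger staircase $\Delta$, the complementary component chosen to be conjugation-symmetric (e.g.\ a single box). Then the finite factorization identity applied to $\Delta$---now with arbitrarily many variables available as $\Delta$ grows---gives $s_{\delta/\mu}\,s_{\text{box}}=s_{\delta/\mu'}\,s_{\text{box}}$, and cancelling the common factor yields the full identity; alternatively I would fall back on the jeu-de-taquin equivalences of \cite{reiner-shaw-willigenburg-2007} to slide the connected shape to one manifestly symmetric under conjugation.
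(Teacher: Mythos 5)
This lemma is not proved in the paper at all: it is imported verbatim from \cite[Corollary~7.32]{reiner-shaw-willigenburg-2007}, so there is no internal argument to compare against. Your proposal also runs the paper's logic backwards: the paper deduces $\hs_\delta(X/Y)=s_\delta(X,Y)$ (\cref{cor:hook-schur}) \emph{from} \cref{lem:staircase}, whereas you want to establish the coincidence first and then read off the lemma coefficientwise. That is a legitimate plan, and most of it checks out: the equivalence via \eqref{split-schur}, \eqref{hs-formula} and $\delta'=\delta$ is right (granting enough variables), the verification of the coincidence when $|X|+|Y|=n+1$ via \cref{thm:hookschurfac} and the product formula $s_{(k,\dots,1)}(z_1,\dots,z_{k+1})=\prod_{i<j}(z_i+z_j)$ is correct, you correctly flag that this only gives the identity in too few variables, and your analysis of the disconnected case is sound (each component of $\delta/\mu$ is indeed a translate of a smaller staircase skew shape, because its bottom row has exactly one box and the right ends decrease by one).

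The genuine gap is in the connected case: the ``complementary component $=$ a single box'' device does not deliver more variables as $\Delta=\delta_M$ grows. If $\Delta/\tilde{\mu}$ is to have exactly two components, a translate of $\delta/\mu$ ($n$ rows) and one box, then at least $M-n-1$ rows of $\Delta$ must be entirely contained in $\tilde{\mu}$, each forcing $\tilde{\mu}_i=\Delta_i>0$, so $\ell(\tilde{\mu})\ge M-n-1$. Your coefficient extraction requires $|X|=p\ge\ell(\tilde{\mu})$, hence yields the identity in only $q=M+1-p\le n+2$ variables of $Y$ --- a bound independent of $M$ and still far below the degree $|\delta|-|\mu|$. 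So the step ``arbitrarily many variables available as $\Delta$ grows'' fails as written, and the jeu-de-taquin fallback is circular (it is the content of the cited source). The architecture is salvageable, though, by making the complementary component \emph{grow}: place $\delta/\mu$ in rows $1,\dots,n$ of $\delta_M$, fill row $n+1$ completely, and leave rows $n+2,\dots,M$ untouched so that they form the self-conjugate staircase $\delta_{M-n-1}$. Then $\ell(\tilde{\mu})=n+1$ is fixed, the extraction works in $q=M-n$ variables, and cancelling the nonvanishing factor $s_{\delta_{M-n-1}}(Y)$ from $s_{\Delta/\tilde{\mu}}(Y)=s_{\Delta/\tilde{\mu}'}(Y)$ gives $s_{\delta/\mu}(Y)=s_{\delta/\mu'}(Y)$ in arbitrarily many variables --- for every $\mu$ at once, which also makes the induction and the connected/disconnected dichotomy unnecessary.
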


Using  \cref{lem:staircase} in \eqref{split-schur}, we have the following result.

\begin{cor}
\label{cor:hook-schur}
If $\lambda$ is a staircase partition with $\ell(\lambda) \leq n+m$, then 
\[
   \hs_{\lambda}(X/Y) = s_{\lambda}(X,Y).
    \]
\end{cor}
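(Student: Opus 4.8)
The claim is that for a staircase partition $\lambda = (n, n-1, \dots, 1)$ (more precisely, $\delta$ with $\ell(\lambda) \leq n+m$), the hook Schur polynomial $\hs_\lambda(X/Y)$ equals the ordinary Schur polynomial $s_\lambda(X,Y)$ in the combined set of variables. Let me think carefully about the two formulas involved.

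The hook Schur polynomial is defined by $\hs_\lambda(X/Y) = \sum_\mu s_\mu(X) s_{\lambda'/\mu'}(Y)$, summing over partitions $\mu$. The Schur polynomial in combined variables splits as $s_\lambda(X,Y) = \sum_{\mu \subset \lambda} s_\mu(X) s_{\lambda/\mu}(Y)$ by equation (split-schur). So the difference between the two expressions lies entirely in whether we see $s_{\lambda'/\mu'}(Y)$ or $s_{\lambda/\mu}(Y)$ in the $Y$-factor.

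**The plan.** The strategy is to compare the two expansions term by term, indexed by the same $\mu$. I would start by writing out both sides using their defining/splitting formulas over partitions $\mu \subset \lambda$. For $s_\lambda(X,Y)$ this is exactly (split-schur). For $\hs_\lambda(X/Y)$ I would reindex the sum in (hs-formula): the summand is $s_\mu(X) s_{\lambda'/\mu'}(Y)$, and I would substitute $\mu \mapsto \mu'$ (a bijection on partitions) so that the $X$-factor becomes $s_{\mu'}(X)$... but that changes the $X$-factor, which is not what I want.

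Let me reconsider. The correct matching keeps $s_\mu(X)$ in both. In $\hs_\lambda(X/Y) = \sum_\mu s_\mu(X)\, s_{\lambda'/\mu'}(Y)$, the term with a given $\mu$ has $Y$-factor $s_{\lambda'/\mu'}(Y)$, whereas the corresponding term in $s_\lambda(X,Y)$ has $Y$-factor $s_{\lambda/\mu}(Y)$. So it suffices to prove $s_{\lambda/\mu}(Y) = s_{\lambda'/\mu'}(Y)$ for each $\mu$, and this is precisely where the staircase hypothesis enters. When $\lambda = \delta$ is a staircase, $\delta' = \delta$, and Lemma~\ref{lem:staircase} gives exactly $s_{\delta/\mu}(Y) = s_{\delta/\mu'}(Y) = s_{\delta'/\mu'}(Y)$ for all $\mu \subset \delta$. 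Thus the two term-by-term expansions coincide summand by summand.

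**Writing it up.** So the proof would go: apply (split-schur) to the right-hand side $s_\lambda(X,Y) = \sum_{\mu \subset \lambda} s_\mu(X)\, s_{\lambda/\mu}(Y)$; apply (hs-formula) to the left-hand side, noting that since $\lambda = \delta$ is self-conjugate the index $\lambda' = \delta$ and the summand is $s_\mu(X)\, s_{\delta/\mu'}(Y)$; then invoke Lemma~\ref{lem:staircase} to replace $s_{\delta/\mu'}(Y)$ by $s_{\delta/\mu}(Y)$ in each term. Since the two sums then agree summand by summand over $\mu \subset \delta$, the polynomials are equal. One should also check the index ranges match — that both sums are effectively over $\mu \subset \delta$ (terms with $\mu \not\subset \delta$ vanish because the corresponding skew Schur polynomial is zero, per the remark following \eqref{def:SSkew-Jcob}).

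**Main obstacle.** There is no deep obstacle; the content is entirely packaged in Lemma~\ref{lem:staircase}. The only subtlety to verify carefully is the bookkeeping with conjugation: one must confirm that using $\delta' = \delta$ together with $s_{\delta/\mu} = s_{\delta/\mu'}$ is really equivalent to $s_{\delta'/\mu'} = s_{\delta/\mu}$, and that the hypothesis $\ell(\lambda) \leq n+m$ ensures there are enough variables in the combined set $(X,Y)$ for the splitting formula \eqref{split-schur} to be valid (i.e., $\lambda \in \mathcal{P}_{n+m}$). Once those are pinned down, the identity is immediate.
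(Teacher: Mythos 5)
Your proposal is correct and is essentially the paper's own argument: the paper likewise obtains the corollary by substituting \cref{lem:staircase} into \eqref{split-schur} and matching the resulting sum with \eqref{hs-formula}, using that a staircase is self-conjugate. The extra bookkeeping you flag (index ranges, $\delta'=\delta$, enough variables) is handled implicitly in the paper but does not change the route.
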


Using \cref{thm:hookschurfac} and \cref{cor:hook-schur}, we obtain the following  factorization of Schur polynomials of staircase shapes.

    \begin{thm} 
    \label{thm:schur-fact}
    We have 
        \begin{equation}
            s_{{\delta_{n+m}}}(X,Y)=
            s_{{\delta_{n}}}(X) s_{{\delta_{m}}}(Y) \prod_{i=1}^n \prod_{j=1}^m  (x_i+y_j),
        \end{equation}
    and
        \begin{equation}
            s_{{\delta_{n+m-1}}}(X,Y)= s_{{\delta_{n-1}}}(X) s_{{\delta_{m-1}}}(Y)
            \prod_{i=1}^n \prod_{j=1}^m (x_i+y_j).
        \end{equation}
        \end{thm}

{We also obtain a partial converse to \cref{cor:hook-schur}.}

\begin{thm} 
\label{hsequality}
Suppose $\lambda$ is a partition with $\ell(\lambda) \leq n+m-2$. Then
    \[
    \hs_{\lambda}(X/Y)= s_{\lambda}(X,Y)
    \]
if and only if  $\lambda$ is a staircase partition.
\end{thm}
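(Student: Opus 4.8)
The plan is to treat the two directions separately. The \emph{if} direction is immediate: a staircase $\lambda$ with $\ell(\lambda)\le n+m-2\le n+m$ satisfies $\hs_{\lambda}(X/Y)=s_{\lambda}(X,Y)$ by \cref{cor:hook-schur}. All the work is in the converse, for which I would exploit the fundamental property of hook Schur polynomials recorded before \cref{thm:hookschurfac}: the specialization $x_n=u$, $y_m=-u$ makes $\hs_{\lambda}(X/Y)$ independent of $u$. Since we are \emph{assuming} $\hs_{\lambda}(X/Y)=s_{\lambda}(X,Y)$ as polynomials, the same must hold for $s_{\lambda}(X,Y)$; that is, writing $W=(x_1,\dots,x_{n-1},y_1,\dots,y_{m-1})$, the polynomial $s_{\lambda}(W,u,-u)$ is independent of $u$.

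Next I would expand this specialization by the branching rule \eqref{split-schur} applied to the two extra variables $u,-u$:
\[
s_{\lambda}(W,u,-u)=\sum_{\nu\subseteq\lambda}s_{\nu}(W)\,s_{\lambda/\nu}(u,-u)=\sum_{\nu\subseteq\lambda}s_{\lambda/\nu}(1,-1)\,u^{|\lambda|-|\nu|}\,s_{\nu}(W),
\]
where the last equality uses the homogeneity of $s_{\lambda/\nu}$. This is exactly where the hypothesis $\ell(\lambda)\le n+m-2$ is needed: every $\nu\subseteq\lambda$ has $\ell(\nu)\le\ell(\lambda)\le n+m-2=|W|$, so the Schur polynomials $\{s_{\nu}(W):\nu\subseteq\lambda\}$ are linearly independent. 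Comparing the coefficient of each positive power of $u$ with the $u$-independent left-hand side (whose value at $u=0$ is precisely the $\nu=\lambda$ term $s_{\lambda}(W)$) then forces
\[
s_{\lambda/\nu}(1,-1)=0\qquad\text{for every }\nu\subsetneq\lambda.
\]

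Finally I would read off the staircase shape from this vanishing by testing it on single-row and single-column skew shapes. If some consecutive difference satisfies $\lambda_i-\lambda_{i+1}\ge 2$, then taking $\nu=(\lambda_1,\dots,\lambda_{i-1},\lambda_i-2,\lambda_{i+1},\dots)$ makes $\lambda/\nu$ a horizontal domino in row $i$, whence $s_{\lambda/\nu}(1,-1)=h_2(1,-1)=1\neq 0$, a contradiction; thus $\lambda_i-\lambda_{i+1}\le 1$ for all $i$, which (taking $\lambda_{\ell(\lambda)+1}=0$) also yields $\lambda_{\ell(\lambda)}\le 1$. Dually, if $\lambda'_j-\lambda'_{j+1}\ge 2$ for some $j$ — equivalently, if some part value of $\lambda$ is repeated — then the analogous choice of $\nu$ makes $\lambda/\nu$ a vertical domino in column $j$, so $s_{\lambda/\nu}(1,-1)=e_2(1,-1)=-1\neq 0$, again a contradiction; hence $\lambda$ has distinct parts. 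Combining, $\lambda_i-\lambda_{i+1}=1$ for $1\le i<\ell(\lambda)$ and $\lambda_{\ell(\lambda)}=1$, i.e. $\lambda=(\ell(\lambda),\ell(\lambda)-1,\dots,1)$ is a staircase. The step I expect to be the main obstacle is the middle one: one must secure the vanishing $s_{\lambda/\nu}(1,-1)=0$ for \emph{all} proper $\nu\subsetneq\lambda$ (including those of full length $\ell(\lambda)$, which are exactly the ones realizing the dominoes in the last step), and it is precisely the bound $\ell(\lambda)\le n+m-2$ that delivers the linear independence of the $s_{\nu}(W)$ needed to justify this.
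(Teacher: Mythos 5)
Your argument is correct, and its skeleton matches the paper's: the forward direction is \cref{cor:hook-schur}, and the converse hinges on the $u$-independence of $\hs_{\lambda}(X/Y)$ under $x_n=u$, $y_m=-u$ versus the behaviour of $s_{\lambda}(W,u,-u)$. The difference is in how the key input is obtained. The paper simply cites \cref{thm:universal-m} with $t=2$ (whose proof runs through \cref{lem:iff-m} and the root-of-unity factorization of skew Schur polynomials in \cref{lem:non-zero}, so that ``$\lambda$ is a $2$-core'' is read off from the core--quotient machinery), whereas you unfold that theorem on the spot: the branching rule in the two variables $u,-u$, homogeneity, and linear independence of $\{s_{\nu}(W)\}$ for $\ell(\nu)\le |W|=n+m-2$ give $s_{\lambda/\nu}(1,-1)=0$ for all $\nu\subsetneq\lambda$, and you then characterize when this happens by the elementary horizontal/vertical domino test ($h_2(1,-1)=1$, $e_2(1,-1)=-1$). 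Your route is self-contained and avoids the $t$-core/$t$-quotient apparatus entirely, at the cost of being special to $t=2$; the paper's route is a one-line corollary of its general independence theorem. You also correctly isolate where the hypothesis $\ell(\lambda)\le n+m-2$ enters (linear independence of the $s_{\nu}(W)$), which is exactly the role it plays in the paper's \cref{thm:universal-m} via the condition $\lambda\in\P_{n-tm}$.
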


\begin{proof} 
Using \eqref{hs-formula} for a staircase partition $\delta$, and applying
\cref{cor:hook-schur},
we see that
\[
\hs_{\delta}(X/Y) = s_{\delta}(X,Y).
\]
For the converse, suppose $\lambda$ is not a staircase partition. Then by  \cref{thm:universal-m} for $t=2$, $s_{\lambda}(x_1,\dots,x_{n-1},y_1,\dots,y_{m-1},u,-u)$ 
depends on $u$. So, $s_{\lambda}(X,Y)$ depends on $u$, 
but $\hs_{\lambda}(X/Y)$ is independent of $u$ after substituting $x_n=u$ and $y_m=-u$.
So, $\hs_{\lambda}(X/Y) \neq s_{\lambda}(X,Y)$. This proves the result.
\end{proof}

\section{Universal characters at roots of unity}
\label{sec:univ roots of unity}

In this section, we will generalize Littlewood's result on evaluation of Schur polynomials at 
roots of unity in \cref{thm:roots of unity} to other universal characters.
For each of the classical characters, we will show that these evaluations are
$0, 1$ or $2$ up to sign. As before, recall that $t \geq 2$ is fixed and $\omega$ is a primitive $t$'th root of unity.

We will need the following observation.
Suppose $\lambda$ and $\mu$ are partitions such that $\ell(\lambda)+\ell(\mu) \leq 2$. Recall the definition of
$\rs_{\lambda,\mu}$ from \eqref{defrs}. Then we have
\begin{equation}
\label{rseval}
\rs_{\lambda,\mu}(1)=\sum_{\nu} (-1)^{|\nu|}s_{\lambda/\nu}(1) 
s_{\mu/\nu'}(1) = \begin{cases}
    1 & \ell(\lambda)+\ell(\mu) \in \{0,1\},\\
    0 & \ell(\lambda)+\ell(\mu)=2.
\end{cases}
\end{equation}

\begin{thm}
Suppose $\lambda$ is a partition in $\P_t$. 
Then ${\usp}_{\lambda}(1,\omega, \dots ,\omega^{t-1})$ is nonzero if and only if
$\core \lambda t$ is a symplectic $t$-core and $\ell(\lambda^{(i)})+\ell(\lambda^{(t-2-i)}) \leq 1$ for all $0 \leq i \leq \floor{{(t-3)}/{2}}$. 
In that case, if $\core \lambda t$ has rank $r$,
\begin{equation*}
{\usp}_{\lambda}(1,\omega, \dots ,\omega^{t-1}) =  (-1)^{\epsilon} 
\sgn (\sigma_{\lambda})
 \times \begin{cases}
2 & t \text{ even and } \lambda^{(t/2-1)} \neq \emptyset,\\
1 & t \text{ even and } \lambda^{(t/2-1)}=\emptyset,
\\1 & t \text{ odd},
    \end{cases} 
\end{equation*}
where 
\[
\epsilon= \ds -\sum_{i=\floor{{t}/{2}}}^{t-2} \binom{n_{i}(\lambda)+1}2 
+ \begin{cases}
1+r & t \text{ even},
\\0  & t \text{ odd}.
\end{cases}
\]
\end{thm}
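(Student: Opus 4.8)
The plan is to read off this evaluation as the one-variable specialization of \cref{thm:sympfact}. Taking $\uX=(1)$ forces $n=1$, and then $(\uX,\omega\uX,\dots,\omega^{t-1}\uX)=(1,\omega,\dots,\omega^{t-1})$ while $\uX^t=(1)$, so \cref{thm:sympfact} applies directly: the value vanishes unless $\core{\lambda}{t}$ is symplectic, and otherwise equals $(-1)^{\epsilon_0}\sgn(\sigma_\lambda)\,\usp_{\lambda^{(t-1)}}(1)\prod_{i=0}^{\floor{(t-3)/2}}\rs_{\lambda^{(i)},\lambda^{(t-2-i)}}(1)$, times $\uoo_{\lambda^{(t/2-1)}}(1)$ when $t$ is even (recall $(t-2)/2=t/2-1$). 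Substituting $n=1$ into the sign of \cref{thm:sympfact} gives $\frac{n(n+1)}{2}=1$ and $nr=r$, so $\epsilon_0$ is literally the $\epsilon$ of the statement; the sign thus needs no further analysis.

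Next I would evaluate the three kinds of factors at the single variable $1$. From the one-row cases of \eqref{def-univsp} and \eqref{def-univso} one gets $\usp_{(k)}(1)=h_k(1)=1$ for all $k\ge 0$, and $\uoo_{(k)}(1)=h_k(1)+h_{k-1}(1)$, which is $2$ for $k\ge 1$ and $1$ for $k=0$. For the middle factors I would apply \eqref{rseval}, giving $\rs_{\mu,\nu}(1)=1$ when $\ell(\mu)+\ell(\nu)\le 1$ and $\rs_{\mu,\nu}(1)=0$ when $\ell(\mu)+\ell(\nu)=2$. These are exactly the numbers appearing in the claimed formula, \emph{provided} the quotients feeding the $\usp$ and $\uoo$ factors are single rows (or empty) and each paired sum $\ell(\lambda^{(i)})+\ell(\lambda^{(t-2-i)})$ is at most $2$ so that \eqref{rseval} is applicable.

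The heart of the argument is the length bound that supplies this proviso. Here I would establish the residue-count description of symplectic $t$-cores, the symplectic counterpart of \cref{cor:sel-parts}: $\core{\lambda}{t}$ is symplectic exactly when $n_i(\lambda)+n_{t-2-i}(\lambda)=2n$ for all $i$ modulo $t$, the pairing $i\leftrightarrow t-2-i$ being the one already present in \cref{thm:sympfact}. With $n=1$ this reads $n_i(\lambda)+n_{t-2-i}(\lambda)=2$. The involution $i\mapsto t-2-i \pmod t$ has the single fixed point $t-1$ when $t$ is odd and the two fixed points $t-1$ and $t/2-1$ when $t$ is even; at a fixed point the relation becomes $2n_{t-1}(\lambda)=2$ (and $2n_{t/2-1}(\lambda)=2$), so $n_{t-1}(\lambda)=1$ and, for even $t$, $n_{t/2-1}(\lambda)=1$. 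Since $\ell(\lambda^{(i)})\le n_i(\lambda)$ by construction of the $t$-quotient, the quotients $\lambda^{(t-1)}$ and $\lambda^{(t/2-1)}$ are single rows or empty, so $\usp_{\lambda^{(t-1)}}(1)=1$ and $\uoo_{\lambda^{(t/2-1)}}(1)$ equals $2$ or $1$ according as $\lambda^{(t/2-1)}$ is nonempty or empty. For the paired indices, $\ell(\lambda^{(i)})+\ell(\lambda^{(t-2-i)})\le n_i(\lambda)+n_{t-2-i}(\lambda)=2$, so \eqref{rseval} always applies: the product of $\rs$-factors equals $1$ if every paired sum is at most $1$ and $0$ if some paired sum equals $2$. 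This identifies the nonvanishing condition in the statement and, in the surviving case, collapses the factorization to the displayed value.

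I expect the one genuine obstacle to be proving the residue-count characterization of symplectic cores and the bound $\ell(\lambda^{(i)})\le n_i(\lambda)$ in precisely the form above; once these are secured, the remainder is the bookkeeping of putting $n=1$ into \cref{thm:sympfact} and substituting the single-variable values computed above.
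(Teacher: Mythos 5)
Your proposal is correct and follows essentially the same route as the paper: specialize \cref{thm:sympfact} at $n=1$, evaluate the $\usp$, $\uoo$ and $\rs$ factors at the single variable $1$ via \eqref{rseval} and the one-row Jacobi--Trudi determinants, and read off the sign. The only difference is that you spell out the length bounds $\ell(\lambda^{(i)})+\ell(\lambda^{(t-2-i)})\le 2$ via the residue-count characterization of symplectic cores, which the paper simply asserts.
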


\begin{proof}
If $\core \lambda t$ is not a symplectic $t$-core, then by
\cref{thm:sympfact},  
\[
{\usp}_{\lambda}(1,\omega , \dots ,\omega^{t-1}) =  0.
\]
If it is, then again by the same result, we have 
\begin{equation*}
{\usp}_{\lambda}(1,\omega , \dots ,\omega^{t-1}) =  (-1)^{\epsilon} 
\sgn (\sigma_{\lambda}) \, \usp_{\lambda^{(t-1)}}(1) 
 \prod_{i=0}^{\floor{{(t-3)}/{2}}} 
  \rs_{\lambda^{(i)},  \lambda^{(t-2-i)}}(1)
  \times 
\begin{cases}
\uoo_{\lambda^{(t/2-1)}}(1) 
& t \text{ even} ,\\
1 & t \text{ odd},
\end{cases}  
\end{equation*}
where 
\[
\epsilon= \ds -\sum_{i=\floor{{t}/{2}}}^{t-2} \binom{n_{i}(\lambda)+1}2 
+ \begin{cases}
1+r & t \text{ even},
\\0  & t \text{ odd},
\end{cases}
\]
and $\ell(\lambda^{(t-1)}), \,\,
\ell(\lambda^{(t/2-1)}) \leq 1$ and $\ell(\lambda^{(i)})+\ell(\lambda^{(t-2-i)}) \leq 2$
for 
$0 \leq i \leq \floor{{(t-3)}/{2}}$.
By definition,
\[
\usp_{\lambda^{(t-1)}}(1)=h_{\lambda^{(t-1)}}(1) = 1 
\]
and 
\[
\uoo_{\lambda^{(t/2-1)}}(1) = {h_{\lambda^{(t/2-1)}}(1)+h_{\lambda^{(t/2-1)}-1}(1)}=\begin{cases}
    1 & \ell(\lambda^{(t/2-1)}) = 0, \\
    2 & \ell(\lambda^{(t/2-1)}) > 0.
\end{cases}
\]
We now use \eqref{rseval} to conclude the result.
\end{proof}

\begin{thm}
\label{thm:uoe roots of unity}
Suppose $\lambda$ is a partition in $\P_t$.
Then $\uoe_{\lambda}(1,\omega, \dots ,\omega^{t-1})$ is nonzero if and only if $\core \lambda t$ is {an orthogonal} $t$-core, $\ell(\lambda^{(i)})+\ell(\lambda^{(t-i)}) \leq 1$ for all $1 \leq i \leq \floor{{t}/{2}}$ and $\lambda^{(0)} \in \{\emptyset,(1)\}$.
In that case, if $\core \lambda t$ has rank $r$,
\begin{equation*}
\uoe_{\lambda}(1,\omega, \dots ,\omega^{t-1}) =  (-1)^{\epsilon} 
\sgn (\sigma_{\lambda})
 \times \begin{cases}
0 & t \text{ even and }
\lambda^{(t/2)} \neq \emptyset,\\
1 & t \text{ even and } 
\lambda^{(t/2)}=\emptyset,
\\1 & t \text{ odd},
    \end{cases} 
\end{equation*}
where 
\[
\epsilon= \ds -\sum_{i=\floor{(t+2)/{2}}}^{t-1} \binom{n_{i}(\lambda)+1}2 +r 
+ \begin{cases}
1+r & t \text{ even},
\\0  & t \text{ odd}.
\end{cases}
\]
\end{thm}

\begin{proof}
If $\core \lambda t$ is not an orthogonal $t$-core, then by
\cref{thm:eorthfact},
\[
\uoe_{\lambda}(1,\omega , \dots ,\omega^{t-1}) =  0.
\]
If it is, then again by the same result, we have, 
\begin{equation*}
\uoe_{\lambda}(1,\omega , \dots ,\omega^{t-1}) =  (-1)^{\epsilon} 
\sgn (\sigma_{\lambda}) \times \uoe_{\lambda^{(0)}}(1) 
 \times \prod_{i=1}^{\floor{{(t-1)}/{2}}} 
  \rs_{\lambda^{(i)},  \lambda^{(t-i)}}(1)
  \times 
\begin{cases}
\uoo^-_{\lambda^{(t/2)}}(1) 
& t \text{ even} ,\\
1 & t \text{ odd},
\end{cases}  
\end{equation*}
where 
\[
\epsilon= \ds -\sum_{i=\floor{(t+2)/{2}}}^{t-1} \binom{n_{i}(\lambda)+1}2 +r 
+ \begin{cases}
1+r & t \text{ even},
\\0  & t \text{ odd},
\end{cases}
\]
$\ell(\lambda^{(0)}), \,\,
\ell(\lambda^{(t/2)}) \leq 1$ and $\ell(\lambda^{(i)})+\ell(\lambda^{(t-i)}) \leq 2$
for 
$1 \leq i \leq \floor{{(t-1)}/{2}}$. Observe that
\[
\uoe_{\lambda^{(0)}}(1)
=h_{\lambda^{(o)}}(1)-h_{\lambda^{(o)}-2}(1) = \begin{cases}
   1 & \lambda^{(0)} \in \{\emptyset, (1) \}\\
   0 & \text{ otherwise.}
\end{cases}
\]
and 
\[
\uoo^-_{\lambda^{(t/2)}}(1) = h_{\lambda^{(t/2)}}(1)
-h_{\lambda^{(t/2)}-1}(1)
=\begin{cases}
    1 & \lambda^{(t/2)} = \emptyset, \\
    0 & \lambda^{(t/2)} \neq \emptyset.
\end{cases}
\]
Therefore, using \eqref{rseval} completes the proof.
\end{proof}

\begin{thm}
Suppose $\lambda$ is a partition in $\P_t$. 
Then $\uoo_\lambda(1,\omega, \dots ,\omega^{t-1})$ is nonzero if and only if
$\core \lambda t$ is a self-conjugate $t$-core and $\ell(\lambda^{(i)})+\ell(\lambda^{(t-1-i)}) \leq 1$ for all $0 \leq i \leq \floor{{(t-2)}/{2}}$.
In that case, if $\core \lambda t$ has rank $r$, 
\begin{equation*}
\uoo_\lambda(1,\omega, \dots ,\omega^{t-1}) =  (-1)^{\epsilon} 
\sgn (\sigma_{\lambda})
 \times \begin{cases}
1 & t \text{ even},\\
1 & t \text{ odd and } \lambda^{((t-1)/2)}=\emptyset,
\\
2 & t \text{ odd and } \lambda^{((t-1)/2)} \neq \emptyset,
    \end{cases} 
\end{equation*}
where 
\[
\epsilon= \ds -\sum_{i=\floor{{(t+1)}/{2}}}^{t-1} \binom{n_{i}(\lambda)+1}2 
+ \begin{cases}
0 & t \text{ even},
\\r  & t \text{ odd}.
\end{cases}
\]
\end{thm}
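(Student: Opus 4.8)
The plan is to specialize \cref{thm:oorthfact} to a single variable, exactly as in the two preceding theorems. I would take $n = 1$ and set $\uX = (1)$ (so that all remaining variables vanish); then $\uX^t = (1)$ as well, and the left-hand side of that factorization is precisely $\uoo_\lambda(1, \omega, \dots, \omega^{t-1})$. If $\core \lambda t$ is not self-conjugate, \cref{thm:oorthfact} gives $\uoo_\lambda(1, \omega, \dots, \omega^{t-1}) = 0$ at once, which disposes of the vanishing case.

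Assuming henceforth that $\core \lambda t$ is self-conjugate of rank $r$, \cref{thm:oorthfact} with $n = 1$ yields
\[
\uoo_\lambda(1, \omega, \dots, \omega^{t-1}) = (-1)^\epsilon \sgn(\sigma_\lambda) \prod_{i=0}^{\floor{(t-2)/2}} \rs_{\lambda^{(i)}, \lambda^{(t-1-i)}}(1) \times \begin{cases} \uoo_{\lambda^{((t-1)/2)}}(1) & t \text{ odd}, \\ 1 & t \text{ even}, \end{cases}
\]
with $\epsilon$ exactly the claimed expression (substituting $nr = r$). The crucial structural input, which I would record next, is the control on the lengths of the quotient components: since $\ell(\lambda^{(i)}) \leq n_i(\lambda)$ always holds, and since self-conjugacy of $\core \lambda t$ forces $n_i(\lambda) + n_{t-1-i}(\lambda) = 2$ for every $i$ by \cref{cor:sel-parts} applied with $n=1$, I obtain $\ell(\lambda^{(i)}) + \ell(\lambda^{(t-1-i)}) \leq 2$ for all $0 \leq i \leq \floor{(t-2)/2}$, and moreover $\ell(\lambda^{((t-1)/2)}) \leq 1$ when $t$ is odd (the middle index then being self-paired, so $n_{(t-1)/2}(\lambda) = 1$).

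It then remains only to evaluate the two kinds of factors at the single variable $1$. For each paired factor $\rs_{\lambda^{(i)}, \lambda^{(t-1-i)}}(1)$, the length bound places us in the regime of \eqref{rseval}, giving value $1$ when $\ell(\lambda^{(i)}) + \ell(\lambda^{(t-1-i)}) \in \{0,1\}$ and $0$ when the sum equals $2$. For the middle factor, present only when $t$ is odd, the bound $\ell(\lambda^{((t-1)/2)}) \leq 1$ turns the determinant in \eqref{def-univso} into a $1 \times 1$ one equal to $h_{\nu_1}(1) + h_{\nu_1 - 1}(1)$ with $\nu = \lambda^{((t-1)/2)}$; using $h_m(1) = 1$ for $m \geq 0$ and $h_m(1) = 0$ for $m < 0$, this equals $1$ if $\nu = \emptyset$ and $2$ if $\ell(\nu) > 0$.

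Assembling these evaluations proves both assertions at once: the character is nonzero precisely when $\core \lambda t$ is self-conjugate and every paired factor survives, that is $\ell(\lambda^{(i)}) + \ell(\lambda^{(t-1-i)}) \leq 1$ for $0 \leq i \leq \floor{(t-2)/2}$ (the middle odd-$t$ factor never vanishing), and in that case each $\rs$ factor contributes $1$, leaving exactly the stated trichotomy of values $1$, $1$, $2$ against the prefactor $(-1)^\epsilon \sgn(\sigma_\lambda)$. I expect no genuine obstacle; the only step demanding care is the length bookkeeping via \cref{cor:sel-parts} that certifies the applicability of \eqref{rseval} and of the $1 \times 1$ determinant evaluation, together with correctly matching the odd/even and empty/nonempty cases.
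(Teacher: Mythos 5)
Your proposal follows the paper's proof essentially verbatim: both specialize \cref{thm:oorthfact} to a single variable, dispose of the non-self-conjugate case immediately, evaluate the paired factors via \eqref{rseval} and the middle factor via the $1\times 1$ determinant from \eqref{def-univso}. The only difference is that you spell out the length bounds on the quotient components via \cref{cor:sel-parts}, which the paper simply asserts; this is a welcome but inessential elaboration.
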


\begin{proof}
If $\core \lambda t$ is not a self-conjugate $t$-core, then by
 \cref{thm:oorthfact},
\[
\uoo_\lambda(1,\omega , \dots ,\omega^{t-1}) =  0.
\]
If it is, then again by the same result, we have  
\begin{equation*}
\uoo_\lambda(1,\omega , \dots ,\omega^{t-1}) =  (-1)^{\epsilon} 
\sgn (\sigma_{\lambda}) \, \prod_{i=0}^{\floor{{(t-2)}/{2}}} 
  \rs_{\lambda^{(i)},  \lambda^{(t-1-i)}}(1)  
  \times 
\begin{cases}
1 
& t \text{ even} ,\\
\uoo_{\lambda^{((t-1)/2)}}(1) & t \text{ odd},
\end{cases}  
\end{equation*}
where 
\[
\epsilon= \ds -\sum_{i=\floor{{(t+1)}/{2}}}^{t-1} \binom{n_{i}(\lambda)+1}2 
+ \begin{cases}
0 & t \text{ even},
\\r  & t \text{ odd},
\end{cases}
\]
$\ell(\lambda^{((t-1)/2)}) \leq 1$ and $\ell(\lambda^{(i)})+\ell(\lambda^{(t-1-i)}) \leq 2$ 
for 
$0 \leq i \leq \floor{{(t-2)}/{2}}$.
By definition,
\[
\uoo_{\lambda^{((t-1)/2)}}(1) = {h_{\lambda^{((t-1)/2)}}(1)+h_{\lambda^{((t-1)/2)}-1}(1)}=\begin{cases}
    1 & \ell(\lambda^{((t-1)/2)}) = 0, \\
    2 & \ell(\lambda^{((t-1)/2)}) > 0.
\end{cases}
\]
Therefore, using \eqref{rseval} completes the proof.
\end{proof}

Using \eqref{soneg}, we get the following result.

\begin{thm}
Suppose $\lambda$ is a partition in $\P_t$. 
Then $\uoo^-_\lambda(1,\omega, \dots ,\omega^{t-1})$ is nonzero if and only if
$\core \lambda t$ is a self-conjugate $t$-core and $\ell(\lambda^{(i)})+\ell(\lambda^{(t-1-i)}) \leq 1$ for all $0 \leq i \leq \floor{{(t-2)}/{2}}$.
In that case, if $\core \lambda t$ has rank $r$, 
\begin{equation*}
\uoo^-_\lambda(1,\omega, \dots ,\omega^{t-1}) =  (-1)^{|\lambda|+\epsilon} 
\sgn (\sigma_{\lambda})
 \times \begin{cases}
1 & t \text{ even},\\
1 & t \text{ odd and } \lambda^{((t-1)/2)}=\emptyset,
\\
2 & t \text{ odd and } \lambda^{((t-1)/2)} \neq \emptyset,
    \end{cases} 
\end{equation*}
where 
\[
\epsilon= \ds -\sum_{i=\floor{{(t+1)}/{2}}}^{t-1} \binom{n_{i}(\lambda)+1}2 
+ \begin{cases}
0 & t \text{ even},
\\r  & t \text{ odd}.
\end{cases}
\]
\end{thm}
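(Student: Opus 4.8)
The plan is to deduce this statement as an immediate corollary of the preceding theorem on $\uoo_\lambda(1,\omega,\dots,\omega^{t-1})$ together with the relation \eqref{soneg}, which asserts that $\uoo_\lambda^-(\uX)=(-1)^{|\lambda|}\uoo_\lambda(\uX)$ as an identity of universal characters in the variables $\uX$. Since this is a polynomial identity valid for every specialization of $\uX$, I may in particular substitute $\uX=(1,\omega,\dots,\omega^{t-1})$ (with all remaining variables set to $0$), obtaining
\[
\uoo_\lambda^-(1,\omega,\dots,\omega^{t-1})=(-1)^{|\lambda|}\,\uoo_\lambda(1,\omega,\dots,\omega^{t-1}).
\]

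First I would observe that the vanishing behaviour is identical to that of $\uoo_\lambda$: the scalar $(-1)^{|\lambda|}$ is nonzero, so $\uoo_\lambda^-(1,\omega,\dots,\omega^{t-1})$ is nonzero if and only if $\uoo_\lambda(1,\omega,\dots,\omega^{t-1})$ is. By the preceding theorem, this occurs precisely when $\core\lambda t$ is a self-conjugate $t$-core and $\ell(\lambda^{(i)})+\ell(\lambda^{(t-1-i)})\leq 1$ for all $0\leq i\leq\floor{(t-2)/2}$, which is exactly the criterion stated here.

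In the nonzero case, I would substitute the explicit formula from the preceding theorem for $\uoo_\lambda(1,\omega,\dots,\omega^{t-1})$ and multiply through by $(-1)^{|\lambda|}$. The prefactor $(-1)^{\epsilon}$ becomes $(-1)^{|\lambda|+\epsilon}$ with the very same $\epsilon$, while $\sgn(\sigma_\lambda)$ and the case distinction governed by whether $\lambda^{((t-1)/2)}$ is empty are left unchanged. This reproduces the displayed formula verbatim.

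Since \eqref{soneg} is already established and the preceding theorem supplies every ingredient, there is no genuine obstacle; the result is purely a direct corollary. The only point warranting a moment's care is to confirm that \eqref{soneg} is an identity in the universal (infinitely many) variables $\uX$, so that the finite specialization $(1,\omega,\dots,\omega^{t-1})$ is a legitimate substitution — but this is immediate from the definition of the universal characters as polynomials in $\uX$.
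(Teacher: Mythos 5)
Your argument is exactly the paper's: the result is stated as an immediate consequence of the preceding theorem on $\uoo_\lambda(1,\omega,\dots,\omega^{t-1})$ via the identity \eqref{soneg}, with the factor $(-1)^{|\lambda|}$ absorbed into the sign. The proposal is correct and matches the paper's proof.
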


\section{Classical characters at roots of unity}
\label{sec:class roots of unity}

In this section, we generalize \cref{thm:roots of unity} to all other classical characters. 
Unlike Schur polynomials or universal characters discussed in \cref{sec:univ roots of unity}, the 
values here are not bounded.

We will need the following elementary result.

\begin{lem}
\label{lem:eq}
Suppose $\lambda \in \P_{1}.$ We have 
\begin{enumerate}
    \item $\sp_{\lambda}(1)=s_{\lambda}(1,1)$,

    \item $\oo_{\lambda}(1)= s_{\lambda}(1,1)+s_{\lambda/(1)}(1,1) = s_{2\lambda}(1,1)$,

    \item $\oe_{\lambda}(1)= \begin{cases}
    2 & \lambda \neq \emptyset,\\
    1 & \lambda = \emptyset,
\end{cases}$

    \item $\oo_{\lambda}(-1) = (-1)^{|\lambda|}$.
\end{enumerate}

\end{lem}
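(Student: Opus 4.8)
The plan is to collapse the determinantal definitions \eqref{def:sp-jcob}, \eqref{def:odd-jcob} and \eqref{def:even-Jcob} to a single entry in the case $n=1$. Writing $X=(x)$ and $\x=1/x$, and taking $\lambda=(m)$, the $1\times 1$ determinants give at once
\[
\sp_{(m)}(x)=h_m(x,\x),\qquad \oe_{(m)}(x)=h_m(x,\x)-h_{m-2}(x,\x),\qquad \oo_{(m)}(x)=h_m(x,\x,1)-h_{m-2}(x,\x,1).
\]
Since $s_{(m)}=h_m$ and $s_{(m)/(1)}=h_{m-1}$ by the (skew) Jacobi--Trudi identity, and since $h_k(1,1)=k+1$ with $h_k=0$ for $k<0$, the lemma will follow by evaluating complete homogeneous polynomials at all-ones.

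Parts (1) and (3) are then immediate: setting $x=1$ gives $\sp_{(m)}(1)=h_m(1,1)=s_{(m)}(1,1)$, while $\oe_{(m)}(1)=h_m(1,1)-h_{m-2}(1,1)=(m+1)-(m-1)=2$ for $m\ge1$ and $\oe_{\emptyset}(1)=h_0(1,1)=1$ for $m=0$, which matches the stated two cases.

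For parts (2) and (4) I would first record the $+1$ analogue of \cref{prop:compid}: the generating function $\sum_k h_k(W,1)q^k=(1-q)^{-1}\prod_{w\in W}(1-wq)^{-1}$, multiplied by $(1-q)$, collapses via \eqref{genfn-h} to $h_k(W,1)-h_{k-1}(W,1)=h_k(W)$ for any alphabet $W$; applying this twice gives $h_k(W,1)-h_{k-2}(W,1)=h_k(W)+h_{k-1}(W)$. Taking $W=(x,\x)$ converts the formula for $\oo_{(m)}$ into the polynomial identity
\[
\oo_{(m)}(x)=h_m(x,\x)+h_{m-1}(x,\x)=s_{(m)}(x,\x)+s_{(m)/(1)}(x,\x).
\]
Setting $x=1$ yields the first equality of (2); the second, $s_{(m)}(1,1)+s_{(m)/(1)}(1,1)=s_{(2m)}(1,1)$, is the arithmetic $(m+1)+m=2m+1=h_{2m}(1,1)$. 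For (4), evaluating the same polynomial identity at $x=-1$ (so that $\x=-1$ as well) and using $h_k(-X)=(-1)^kh_k(X)$ gives $(-1)^m(m+1)+(-1)^{m-1}m=(-1)^m=(-1)^{|\lambda|}$.

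There is no substantive obstacle here; once the determinants collapse, each part is a short computation. The only points requiring care are the small cases $m\in\{0,1\}$, where negative-degree terms vanish, and the clean derivation of the $+1$ identity for $h_k$, which is precisely what makes the odd-orthogonal parts (2) and (4) transparent rather than a bare numerical verification.
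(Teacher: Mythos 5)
Your proposal is correct and follows essentially the same route as the paper: collapse the $1\times 1$ determinants and evaluate $h_k(1,1)=k+1$. The only cosmetic difference is that for parts (2) and (4) you start from the classical definition \eqref{def:odd-jcob} and strip off the extra variable $1$ via a generating-function identity, whereas the paper goes directly through the universal character formula \eqref{def-univso}, which already gives $\oo_{(m)}(x)=h_m(x,\x)+h_{m-1}(x,\x)$ without that intermediate step.
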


\begin{proof}
    By \eqref{def:sp-jcob}, $\sp_{\lambda}(1) = h_{\lambda_1}(1,1) = s_{\lambda}(1,1)$,
 where the second equality follows from \eqref{def:SS-Jcob}. This proves \cref{lem:eq}(1). Now consider $\oo_{\lambda}(1)$. 
 By \eqref{def-univso}, we have 
 \begin{equation*}
          \oo_{\lambda}(1)
 = h_{\lambda_1}(1,1) + h_{\lambda_1-1}(1,1) = (\lambda_1+1)+(\lambda_1-1+1) = h_{2 \lambda}(1,1) = s_{2 \lambda}(1,1).
    \end{equation*}
 Similarly, evaluating $\oe_{\lambda}(1)$ and $\oo_{\lambda}(-1)$ using \eqref{def:even-Jcob} and \eqref{def-univso} prove \cref{lem:eq}(3) and \cref{lem:eq}(4) respectively. 
\end{proof} 

For the first result, we will appeal to \cref{thm:sympfact} at $\tX$, which also appeared as~\cite[Theorem 2.11]{ayyer-kumari-2022}.

\begin{thm}
Suppose $\lambda$ is a partition in $\P_t$. 
Then $\sp_{\lambda}(1,\omega, \dots ,\omega^{t-1})$ is nonzero if and only if
$\core \lambda t$ is a symplectic $t$-core.
In that case, if $\core \lambda t$ has rank $r$,
\begin{equation*}
    \begin{split}
\sp_{\lambda}(1,\omega, \dots ,\omega^{t-1}) =  (-1)^{\epsilon} 
\sgn (\sigma_{\lambda})\, \left(\lambda_1^{(t-1)}+1 \right) 
\prod_{i=0}^{(t-3)/2} & \left( \lambda_1^{(i)}+\lambda_1^{(t-2-i)}- \lambda_2^{(i)}-\lambda_2^{(t-2-i)}+1 \right) \\
& \times \begin{cases}
2\lambda_1^{(t/2-1)}+1 & t \text{ even},
\\1 & t \text{ odd},
    \end{cases} 
    \end{split}
\end{equation*}
where 
\[
\epsilon= \ds -\sum_{i=\floor{t/2}}^{t-2} \binom{n_{i}(\lambda)+1}2 + \begin{cases}
1+r & t \text{ even},
\\0  & t \text{ odd}.
\end{cases}
\]
\end{thm}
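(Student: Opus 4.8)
The plan is to invoke the classical symplectic factorization obtained by specializing \cref{thm:sympfact} at $\tX=(X,\X)$, namely \cite[Theorem 2.11]{ayyer-kumari-2022}, and then to set $X=(1)$, a single variable equal to $1$. Since then $(X,\omega X,\dots,\omega^{t-1}X)=(1,\omega,\dots,\omega^{t-1})$, the quantity $\sp_\lambda(1,\omega,\dots,\omega^{t-1})$ is exactly this specialization with one $X$-variable, so that $\lambda\in\P_t$ and the beta-set is taken with $m=t$, matching the statement. First I would record that the character vanishes unless $\core\lambda t$ is symplectic, since that is the nonvanishing criterion of \cref{thm:sympfact}, and that in the symplectic case of rank $r$ the factorization reads
\[
\sp_\lambda(1,\omega,\dots,\omega^{t-1})=(-1)^{\epsilon}\sgn(\sigma_\lambda)\,\sp_{\lambda^{(t-1)}}(1)\prod_{i=0}^{\floor{(t-3)/2}} s_{(\lambda^{(i)},-\lambda^{(t-2-i)})_2}(1,1)\times\begin{cases}\oo_{\lambda^{(t/2-1)}}(1)&t\text{ even},\\ 1&t\text{ odd},\end{cases}
\]
where $X^t=\X^t=(1)$. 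The sign $\epsilon$ is inherited verbatim from the one-variable instance of that factorization; in particular, for $t$ even the term $\tfrac{n(n+1)}{2}+nr$ of \cref{thm:sympfact} becomes $1+r$ at $n=1$, which is why no additional sign bookkeeping arises.

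Next I would evaluate the three kinds of factors at the single argument $1$. Because $X$ carries only one variable, the symplectic condition guarantees $\lambda^{(t-1)},\lambda^{(t/2-1)}\in\P_1$ and $\ell(\lambda^{(i)})+\ell(\lambda^{(t-2-i)})\le 2$, so each factor is well defined. By \cref{lem:eq}(1), $\sp_{\lambda^{(t-1)}}(1)=s_{\lambda^{(t-1)}}(1,1)=\lambda_1^{(t-1)}+1$, and by \cref{lem:eq}(2), $\oo_{\lambda^{(t/2-1)}}(1)=s_{2\lambda^{(t/2-1)}}(1,1)=2\lambda_1^{(t/2-1)}+1$. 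For the middle factors, writing $\alpha=\lambda^{(i)}$ and $\beta=\lambda^{(t-2-i)}$ with $\ell(\alpha)+\ell(\beta)\le 2$, I would reduce the shape $(\alpha,-\beta)_2$ and use $s_{(c_1,c_2)}(1,1)=c_1-c_2+1$ to check, across the possibilities for $(\ell(\alpha),\ell(\beta))$, that uniformly
\[
s_{(\lambda^{(i)},-\lambda^{(t-2-i)})_2}(1,1)=\lambda_1^{(i)}+\lambda_1^{(t-2-i)}-\lambda_2^{(i)}-\lambda_2^{(t-2-i)}+1,
\]
with the convention that absent parts are $0$. Since $\lambda_1^{(i)}\ge\lambda_2^{(i)}$ and $\lambda_1^{(t-2-i)}\ge\lambda_2^{(t-2-i)}$, each such factor is a positive integer; together with the two evaluations above this confirms that the character is nonzero exactly when $\core\lambda t$ is symplectic, with none of the auxiliary length restrictions that appeared in the universal case.

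Finally I would substitute these values into the displayed factorization and collect terms to obtain the claimed product, the exponent $\epsilon$ carrying over unchanged. I expect the one genuinely delicate step to be the Schur evaluation $s_{(\alpha,-\beta)_2}(1,1)$: one must compute the reduced two-part shape $(\alpha,-\beta)_2$ in each of the cases $(\ell(\alpha),\ell(\beta))\in\{(2,0),(1,1),(0,2),(1,0),(0,1),(0,0)\}$ and verify that the count $c_1-c_2+1$ always collapses to the single symmetric expression above. The only other point needing care is conceptual rather than computational: one should quote the established classical factorization at one $X$-variable rather than re-deriving it from $\usp_\lambda(\tX,\omega\tX,\dots,\omega^{t-1}\tX)$ by hand, since the latter secretly uses two variables and would scramble the clean form of $\epsilon$.
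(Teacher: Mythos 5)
Your proposal is correct and follows essentially the same route as the paper: specialize the classical symplectic factorization (\cref{thm:sympfact} at $\tX$, i.e.\ \cite[Theorem 2.11]{ayyer-kumari-2022}) with a single variable $X=(1)$, then evaluate the resulting factors via \cref{lem:eq} and the identity $s_{(c_1,c_2)}(1,1)=c_1-c_2+1$. Your explicit check that each factor is a positive integer (so that non-vanishing is exactly the symplectic-core condition) is a point the paper leaves implicit, but otherwise the arguments coincide.
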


\begin{proof}
If $\core \lambda t$ is not a symplectic $t$-core, then by~\cref{thm:sympfact} at $\tX$, we get 
\begin{equation*}
    \sp_{\lambda}(1,\omega, \dots ,\omega^{t-1}) = 0.
\end{equation*}
If it is, then again by the same result, we have 
\begin{equation*}
\begin{split}
\sp_{\lambda}(1,\omega , \dots ,\omega^{t-1}) =  (-1)^{\epsilon} 
\sgn (\sigma_{\lambda}) \, 
\sp_{\lambda^{(t-1)}}(1) 
 \prod_{i=0}^{\floor{{(t-3)}/{2}}} 
 &s_{(\lambda^{(i)}, -\lambda^{(t-2-i)})_{2}}(1,1) \\ &\times 
\begin{cases}
\oo_{\lambda^{(t/2-1)}}(1) & t \text{ even,}\\
1 & t \text{ odd,}
\end{cases}  
\end{split}
\end{equation*}
where
\[
\epsilon= \ds -\sum_{i=\floor{t/2}}^{t-2} \binom{n_{i}(\lambda)+1}2 + \begin{cases}
1+r & t \text{ even},
\\0  & t \text{ odd}.
\end{cases}
\]
Now using the equalities given in \cref{lem:eq}, we get 
\begin{equation*}
\begin{split}
\sp_{\lambda}(1,\omega , \dots ,\omega^{t-1}) =  (-1)^{\epsilon} 
\sgn (\sigma_{\lambda}) \, s_{\lambda^{(t-1)}}(1,1) 
  \prod_{i=0}^{\floor{{(t-3)}/{2}}} & s_{{(\lambda^{(i)}, -\lambda^{(t-2-i)})_{2}}}(1,1)  \\
  &\times 
\begin{cases}
s_{2\lambda^{(t/2-1)}}(1,1) & t \text{ even},\\
1 & t \text{ odd}.
\end{cases}  
\end{split}
\end{equation*}
Since $s_{(\lambda_1,\lambda_2)}(1,1)=\lambda_1-\lambda_2+1$,
the result holds true.
\end{proof}

For the next result, we will appeal to \cref{thm:eorthfact} at $\tX$, which also appeared as~\cite[Theorem 2.15]{ayyer-kumari-2022}.

\begin{thm}
 Suppose $\lambda$ is a partition in $\P_t$. Then 
 $\oe_{\lambda}(1,\omega, \dots ,\omega^{t-1})$ is nonzero if and only if
 $\core \lambda t$ is an orthogonal $t$-core.
 In that case, if $\core \lambda t$ has rank $r$,
      \[
     \oe_{\lambda}(1,\omega, \dots ,\omega^{t-1})  = (-1)^{\epsilon} 
\sgn (\sigma_{\lambda}) \prod_{i=1}^{\floor{(t-1)/{2}}}
      \left( \lambda_1^{(i)}+\lambda_1^{(t-i)}- \lambda_2^{(i)}-\lambda_2^{(t-i)}+1 \right) 
\times \begin{cases}
    2 & \lambda^{(0)} \neq \emptyset,\\
    1 & \lambda^{(0)} = \emptyset,
\end{cases}  
      \]
   where 
\[
\epsilon= -\sum_{i=\floor{{(t+2)}/{2}}}^{t-1} 
\binom{n_{i}(\lambda)+1}{2} +r+ \begin{cases}
1+r & t \text{ even}, \\ 
0 & t \text{ odd}.
\end{cases}
\] 
\end{thm}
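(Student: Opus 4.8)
The plan is to follow the template of the preceding symplectic evaluation, applying the classical specialization of \cref{thm:eorthfact} at $\tX=(X,\X)$ (equivalently \cite[Theorem 2.15]{ayyer-kumari-2022}) and then collapsing the single variable by setting $X=(1)$, so that $(X,\omega X,\dots,\omega^{t-1}X)$ becomes $(1,\omega,\dots,\omega^{t-1})$ while $X^t=\X^t=(1)$. First I would record that $\lambda\in\P_t$ is the case $n=1$ of \cref{thm:eorthfact}; in particular every quotient $\lambda^{(j)}$ lies in $\P_1$, so it has a single part $\lambda_1^{(j)}$ and $\lambda_2^{(j)}=0$, and the constant $\tfrac{n(n+1)}2+nr$ in the sign exponent of \cref{thm:eorthfact} reduces to $1+r$, matching the displayed $\epsilon$. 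If $\core\lambda t$ is not orthogonal, the factorization returns $0$, which disposes of one direction of the nonvanishing assertion.

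When $\core\lambda t$ is orthogonal, the classical factorization writes $\oe_\lambda(1,\omega,\dots,\omega^{t-1})$ as $(-1)^\epsilon\sgn(\sigma_\lambda)$ times $\oe_{\lambda^{(0)}}(X^t)$, times a product $\prod_{i=1}^{\floor{(t-1)/2}} s_{(\lambda^{(i)},-\lambda^{(t-i)})_{2n}}(X^t,\X^t)$ coming via \eqref{rs-s}, times (for even $t$) an odd-orthogonal factor. I would then evaluate each piece at $X=(1)$. \cref{lem:eq}(3) gives $\oe_{\lambda^{(0)}}(1)=2$ or $1$ according as $\lambda^{(0)}\neq\emptyset$ or $\lambda^{(0)}=\emptyset$, which is exactly the case distinction in the statement. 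For the Schur factors, since $n=1$ forces $(\lambda^{(i)},-\lambda^{(t-i)})_2$ to be the one-row shape $(\lambda_1^{(i)}+\lambda_1^{(t-i)},0)$, the elementary identity $s_{(\lambda_1,\lambda_2)}(1,1)=\lambda_1-\lambda_2+1$ produces the factor $\lambda_1^{(i)}+\lambda_1^{(t-i)}-\lambda_2^{(i)}-\lambda_2^{(t-i)}+1$, the subtracted terms being zero.

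The step I expect to be the main obstacle, and the one genuinely separating this from the symplectic evaluation, is the even-$t$ middle factor. Whereas the symplectic proof produced the nonconstant $\oo_{\lambda^{(t/2-1)}}(1)=2\lambda_1^{(t/2-1)}+1$, here the classical factorization contributes $(-1)^{|\lambda^{(t/2)}|}\oo_{\lambda^{(t/2)}}(-X^t)$, exactly as in the even-$t$ branch of \cref{thm:evenfac-1}. Evaluating at $X=(1)$ and using \cref{lem:eq}(4), which gives $\oo_{\lambda^{(t/2)}}(-1)=(-1)^{|\lambda^{(t/2)}|}$, this factor collapses to $(-1)^{|\lambda^{(t/2)}|}(-1)^{|\lambda^{(t/2)}|}=1$; equivalently, a one-line computation gives $\uoo^-_{(a)}(1,1)=h_a(1,1)-h_{a-1}(1,1)=1$. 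This is precisely why no explicit even-$t$ factor survives, and why---unlike the universal evaluation, where the single-variable factor $\uoo^-_{\lambda^{(t/2)}}(1)$ forces vanishing once $\lambda^{(t/2)}\neq\emptyset$---there is no extra nonvanishing constraint on the quotients. Since every surviving factor is then a positive integer, the product is nonzero exactly when $\core\lambda t$ is orthogonal, and assembling the constants with $(-1)^\epsilon\sgn(\sigma_\lambda)$ yields the claimed formula; the even-$t$ factor contributes no residual sign, so tracking $\epsilon$ amounts only to the $n=1$ substitution noted at the outset.
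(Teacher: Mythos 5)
Your proposal is correct and follows essentially the same route as the paper: apply the classical ($\tX$) specialization of \cref{thm:eorthfact} with $n=1$, dispose of the non-orthogonal case, evaluate $\oe_{\lambda^{(0)}}(1)$ and $s_{(\lambda_1,\lambda_2)}(1,1)=\lambda_1-\lambda_2+1$ via \cref{lem:eq}, and observe that the even-$t$ factor $(-1)^{\lambda_1^{(t/2)}}\oo_{\lambda^{(t/2)}}(-1)$ collapses to $1$ by \cref{lem:eq}(4). Your identification of that middle factor as the only point of divergence from the symplectic case, and its cancellation, is exactly what the paper's proof does.
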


\begin{proof}
  If $\core \lambda t$ is not an orthogonal $t$-core, then by~\cref{thm:eorthfact} at $\tX$, we get 
  \[
  \oe_{\lambda}(1,\omega, \dots ,\omega^{t-1})  = 0.
  \]
If it is, then again by the same result, we have 
\begin{multline*}
\oe_{\lambda}(1,\omega, \dots ,\omega^{t-1})  =  (-1)^{\epsilon} 
\sgn (\sigma_{\lambda}) \,
  \oe_{\lambda^{(0)}}(1)   \prod_{i=1}^{\floor{{(t-1)}/{2}}} s_{(\lambda^{(i)}, -\lambda^{(t-i)})_2}(1,1) \\
  \times \begin{cases}
(-1)^{ \lambda_1^{(t/2)}}\displaystyle {\oo_{\lambda^{(t/2)}}(-1)} & t \text{ even},\\
1 & t \text{ odd},
\end{cases}   
\end{multline*}
where 
\[
\epsilon= -\sum_{i=\floor{{(t+2)}/{2}}}^{t-1} 
\binom{n_{i}(\lambda)+1}{2} +r+ \begin{cases}
1+r & t \text{ even}, \\ 
0 & t \text{ odd}.
\end{cases}
\] 
Using the equalities given in \cref{lem:eq}, we have 
\begin{equation*}
    \oe_{\lambda}(1,\omega, \dots ,\omega^{t-1})  =
    (-1)^{\epsilon} 
\sgn (\sigma_{\lambda}) \, \prod_{i=1}^{\floor{{(t-1)}/{2}}} s_{{(\lambda^{(i)}, -\lambda^{(t-i)})_{2}}}(1,1)  
\times \begin{cases}
    2 & \lambda^{(0)} \neq \emptyset,\\
    1 & \lambda^{(0)} = \emptyset.
\end{cases}  
\end{equation*}
Since $s_{(\lambda_1,\lambda_2)}(1,1)=\lambda_1-\lambda_2+1$, this completes the proof.
\end{proof}

For the final result, we will appeal to \cref{thm:oorthfact} at $(\tX, 1)$, which also appeared as~\cite[Theorem 2.17]{ayyer-kumari-2022}.

\begin{thm}
   Suppose $\lambda$ is a partition in $\P_t$. Then
   $\oo_{\lambda}(1,\omega , \dots ,\omega^{t-1})$ is nonzero if and only if
   $\core \lambda t$ is a self-conjugate $t$-core.
   In that case, if $\core \lambda t$ has rank $r$,
        \begin{equation*}
            \begin{split}
             \oo_{\lambda}(1,\omega , \dots ,\omega^{t-1}) = (-1)^{\epsilon} \sgn(\sigma_{\lambda}) \prod_{i=0}^{\floor{{(t-2)}/{2}}} & 
             \left( \lambda_1^{(i)}+\lambda_1^{(t-1-i)}- \lambda_2^{(i)}-\lambda_2^{(t-1-i)}+1 \right) \\
       & \times \begin{cases}
2\lambda_1^{({(t-1)}/{2})}+1 & t \text{ even},
\\1 & t \text{ odd},
    \end{cases}    
            \end{split}
        \end{equation*}
  where \[
\epsilon= \ds -\sum_{i=\floor{{(t+1)}/{2}}}^{t-1} 
\binom{n_{i}(\lambda)+1}2 
+ \begin{cases}
0 & t \text{ even},
\\r  & t \text{ odd}.
\end{cases}
\]      
\end{thm}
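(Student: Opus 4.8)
The plan is to mirror exactly the three preceding proofs in this section, which all follow an identical template: invoke the relevant universal-character factorization at a specific specialization, observe that the resulting factors are single-variable objects, and evaluate those factors using \cref{lem:eq}. Concretely, I would appeal to \cref{thm:oorthfact} specialized at $(\tX,1)=(X,\X,1)$, since this is precisely the specialization that recovers the classical odd orthogonal character $\oo_\lambda$ via \eqref{uni-class}. The dichotomy (zero versus nonzero) is then immediate: by \cref{thm:oorthfact}, the character $\oo_\lambda(1,\omega,\dots,\omega^{t-1})$ vanishes unless $\core \lambda t$ is self-conjugate, which establishes the stated nonvanishing criterion.

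In the nonzero case, \cref{thm:oorthfact} at $(\tX,1)$ yields the product expression
\[
\oo_{\lambda}(1,\omega,\dots,\omega^{t-1})
= (-1)^{\epsilon}\sgn(\sigma_\lambda)
\prod_{i=0}^{\floor{(t-2)/2}} \rs_{\lambda^{(i)},\lambda^{(t-1-i)}}(\tX^t,1)
\times
\begin{cases}
\uoo_{\lambda^{((t-1)/2)}}(\tX^t,1) & t \text{ odd},\\
1 & t \text{ even},
\end{cases}
\]
where each $\tX^t$ collapses to the single evaluation point under $X=(1)$, so the arguments become $(1,1,1)$ and the relevant objects are essentially two-variable specializations. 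The key computational inputs are: first, that $\rs_{\lambda^{(i)},\lambda^{(t-1-i)}}$ evaluated here reduces to $s_{(\lambda^{(i)},-\lambda^{(t-1-i)})_2}(1,1)$ via \eqref{rs-s}, which is an $s_{(\alpha_1,\alpha_2)}(1,1)=\alpha_1-\alpha_2+1$ evaluation giving the factor $\lambda_1^{(i)}+\lambda_1^{(t-1-i)}-\lambda_2^{(i)}-\lambda_2^{(t-1-i)}+1$; and second, for odd $t$, that the leftover central factor $\uoo_{\lambda^{((t-1)/2)}}$ evaluates by \cref{lem:eq}(2) to $s_{2\lambda^{((t-1)/2)}}(1,1)=2\lambda_1^{((t-1)/2)}+1$ (here I would need the length-$\leq 1$ constraint on this central quotient to ensure the single-row formula applies, giving $2\lambda_1+1$). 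This matches the claimed case split.

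The routine but delicate part is bookkeeping the self-dual-pair structure $(\lambda^{(i)},-\lambda^{(t-1-i)})_2$ and confirming that the length constraints forced by $\lambda\in\P_t$ (so that each $\lambda^{(i)}$ has length at most $1$, except possibly pairs summing to at most $2$) make every factor a genuine two-variable Schur polynomial $s_{(\alpha_1,\alpha_2)}(1,1)$. The sign $\epsilon$ is inherited verbatim from \cref{thm:oorthfact}, so no independent sign computation is needed.

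I expect the main obstacle to be a subtle one rather than a hard one: correctly interpreting the central term for odd $t$. In \cref{thm:oorthfact} the central factor is $\uoo_{\lambda^{((t-1)/2)}}(\tX^t)$, and one must verify that at the point $(1,1,1)$ this genuinely produces $2\lambda_1^{((t-1)/2)}+1$ rather than the bounded value $2$ appearing in the universal-character analogue of \cref{sec:univ roots of unity}; the difference is precisely that here we evaluate the full classical character at roots of unity (where $h_m$ counts grow with $m$) rather than substituting $\uX\mapsto(1,\omega,\dots,\omega^{t-1})$ directly. Keeping this distinction straight—so that the central factor is $s_{2\lambda^{((t-1)/2)}}(1,1)$ and not merely $\uoo_{\lambda^{((t-1)/2)}}(1)$—is the one place where a careless reader could obtain the wrong (bounded) answer, which is exactly the point the section's opening remark about unboundedness is warning against.
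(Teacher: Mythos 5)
Your proposal is correct and follows essentially the same route as the paper: apply \cref{thm:oorthfact} with $\uX=(1,1)$ (so the blocks $\uX,\omega\uX,\dots,\omega^{t-1}\uX$ recover $\oo_\lambda(1,\omega,\dots,\omega^{t-1})$), reduce each $\rs$-factor to $s_{(\lambda^{(i)},-\lambda^{(t-1-i)})_2}(1,1)=\lambda_1^{(i)}+\lambda_1^{(t-1-i)}-\lambda_2^{(i)}-\lambda_2^{(t-1-i)}+1$, and evaluate the central factor as $\oo_{\lambda^{((t-1)/2)}}(1)=s_{2\lambda^{((t-1)/2)}}(1,1)=2\lambda_1^{((t-1)/2)}+1$ via \cref{lem:eq}(2). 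Your placement of the central factor in the $t$ odd case agrees with the paper's proof (the $t$ even/odd labels in the printed statement appear to be transposed), and your remark distinguishing the classical evaluation from the bounded universal-character evaluation is exactly the right point to keep straight.
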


\begin{proof}
If $\core \lambda t$ is not a self-conjugate $t$-core, then by~\cref{thm:oorthfact} at $(\tX, 1)$, we have 
        \[
        \oo_{\lambda}(1,\omega , \dots ,\omega^{t-1})  =0.
        \]
If it is, then again by the same result, we have 
\[
\oo_{\lambda}(1,\omega , \dots ,\omega^{t-1})  =  (-1)^{\epsilon} 
\sgn (\sigma_{\lambda})  \, 
\prod_{i=0}^{\floor{{(t-2)}/{2}}}   s_{(\lambda^{(i)}, -\lambda^{(t-1-i)})_{2}}(1,1) 
\times 
\begin{cases}
\oo_{\lambda^{(t-1)/2}}(1) & t \text{ odd},\\
1 & t \text{ even},
\end{cases}   
\]
 where \[
\epsilon= 
\ds -\sum_{i=\floor{{(t+1)}/{2}}}^{t-1} 
\binom{n_{i}(\lambda)+1}2 
+ \begin{cases}
0 & t \text{ even},
\\r  & t \text{ odd}.
\end{cases}
\]
Using the equalities given in \cref{lem:eq}, we have 
\[
\oo_{\lambda}(1,\omega , \dots ,\omega^{t-1}) =  (-1)^{\epsilon} 
\sgn (\sigma_{\lambda})  \, 
\prod_{i=0}^{\floor{{(t-2)}/{2}}}   s_{{(\lambda^{(i)}, -\lambda^{(t-1-i)})_{2}}}(1,1) 
\times 
\begin{cases}
s_{2\lambda^{((t-1)/2)}}(1,1) & t \text{ odd},\\
1 & t \text{ even}.
\end{cases}
\]
Since $s_{(\lambda_1,\lambda_2)}(1,1)=\lambda_1-\lambda_2+1$, this completes the proof.
\end{proof}

\section*{Acknowledgements}
We thank Dipendra Prasad for encouragement as well as many useful conversations, and Seamus Albion for discussions.
{We also thank the referees for their many useful comments.}
The authors were partially supported by SERB Core grant CRG/2021/001592 
as well as the DST FIST program - 2021 [TPN - 700661].

\bibliographystyle{alpha}
\bibliography{Bibliography}

\end{document}